\documentclass[11pt, reqno]{amsart}
\usepackage{amsfonts,amssymb,amscd,amstext,mathrsfs,color}
\usepackage[dvipsnames]{xcolor}
\usepackage{graphicx}
\usepackage{tikz}
\usepackage{hyperref}
\input xy
\xyoption{all}

\textwidth = 156mm
\textheight = 235mm
\evensidemargin=0mm
\oddsidemargin=0mm
\hoffset=2mm
\voffset=-19mm
\parskip =1mm
\parindent = 6mm
\linespread{1.07}
\pagestyle{plain}

\numberwithin{equation}{section}

\newtheorem{theorem}{Theorem}[section]
\newtheorem{corollary}[theorem]{Corollary}
\newtheorem{lemma}[theorem]{Lemma}
\newtheorem{proposition}[theorem]{Proposition}

\theoremstyle{definition}
\newtheorem{remark}[theorem]{Remark}

\newtheorem{definition}[theorem]{Definition}
\newtheorem{example}[theorem]{Example}

\newcommand{\C}{\mathbb{C}}
\newcommand{\D}{\mathbb{D}}
\newcommand{\B}{\mathbb{B}}
\newcommand{\N}{\mathbb{N}}

\newcommand{\R}{\mathbb{R}}

\renewcommand{\H}{\mathbb{H}}

\renewcommand{\Re}{{\operatorname{Re}\,}}

\begin{document}

\title{The horofunction boundary of\\ a Gromov hyperbolic space}

\author{Leandro Arosio$^*$, Matteo Fiacchi$^{\dag*}$, S\'ebastien Gontard$^{\dag*}$  and Lorenzo Guerini}
\thanks{$\dag$  Partially supported by PRIN Real and Complex Manifolds: Topology,
 Geometry and holomorphic dynamics n. 2017JZ2SW5}
 \thanks{$*$ Partially supported by MIUR
 Excellence Department Project awarded to the Department of
 Mathematics, University of Rome Tor Vergata, CUP E83C18000100006.}
 \subjclass[2010]{Primary 32F45; Secondary 32H50, 53C23}

\begin{abstract}
We highlight a condition, the approaching geodesics property, on a proper geodesic Gromov hyperbolic metric space, which implies that the horofunction compactification is topologically equivalent to the Gromov compactification. It is known that this equivalence does not hold in general. We prove using rescaling techniques that the approaching geodesics property is satisfied by bounded strongly pseudoconvex domains of $\C^q$ endowed with the Kobayashi metric. We also show that 
 bounded convex domains of $\C^q$ with boundary of finite type in the sense of D'Angelo satisfy a  weaker property, which still implies the equivalence of the two said compactifications.
 As a consequence  we prove that on those domains  big and small horospheres as defined by Abate in \cite{Ab1988} coincide.
  Finally we generalize the classical Julia's lemma, giving applications to the dynamics of non-expanding maps.
\end{abstract}

\maketitle
\tableofcontents

\section{Introduction}
The concept of \textit{horosphere} plays an important role in geometric function theory and in the dynamics of bounded domains of $\C^q$. Cornerstone results like the  Julia's lemma  or the Denjoy--Wolff theorem are based on horospheres \cite{Ab1988,Ab1990,AbBook, AbRa, Bud}. Recently, horospheres  have been used to prove the existence of backward orbits converging to a boundary repelling fixed point in  bounded strongly convex domains \cite{AAG,AG}, and  to prove the Muir-Suffridge conjecture on the continuous extension to the boundary of biholomorphisms from the ball to a bounded convex domain \cite{braccigaussier, braccigaussier2}. In bounded strongly convex domains horospheres are sublevel sets of a pluricomplex Poisson kernel \cite{poisson1,poisson2}.

Horospheres of the unit ball $\B^q\subseteq \C^q$ can be defined in terms of 
the Kobayashi metric $d_{\B^q}$ of $\mathbb{B}^q$. The horosphere  centered at $\xi\in \partial \B^q$, of radius $R>0$, with  base point $p\in \B^q$ is the domain
$$E_p(\xi,R):=\left\lbrace z\in \B^q\colon \lim_{w\to \xi}d_{\B^q}(z,w)-d_{\B^q}(w,p)<\log R\right\rbrace.$$ 
 The definition given by some authors, see e.g. \cite{AbBook}, shows a factor {1/2} which is due to a different normalization of the Kobayashi metric.
In order to extend this definition to a boundary point $\xi$ of a given bounded domain $D\subseteq \C^q$, endowed with the Kobayashi metric $d_D$, one needs to ensure that the limit
\begin{equation}\label{abatelimit}
\lim_{w\to \xi}d_{D}(z,w)-d_{D}(w,p)
\end{equation}
exists. This is not the case in general, for example the limit \eqref{abatelimit} does not exist if $\xi$ is a point in the distinguished boundary of the  bidisc $\D^2$.
In 1988 Abate defined  \cite{Ab1988} the \textit{big horosphere and the small horosphere} centered at $\xi$ of radius $R>0$ with base point $p$ respectively as
\begin{align}
\label{bigsmallhor}
\begin{split}
E^{b}_p(\xi,R)&:=\left\lbrace z\in D\colon \liminf_{w\to \xi}d_{D}(z,w)-d_{D}(w,p)<\log R\right\rbrace,\\
E^{s}_p(\xi,R)&:=\left\lbrace z\in D\colon \limsup_{w\to \xi}d_{D}(z,w)-d_{D}(w,p)<\log R\right\rbrace.
\end{split}
\end{align}

Later, using Lempert's theory of complex geodesics \cite{Lempert}, he proved that if $D$ is a bounded strongly convex domain with boundary of class $C^3$, then the limit \eqref{abatelimit} exists and thus big and small horospheres coincide (see \cite{Ab1990}). 
The question whether the same holds for strongly pseudoconvex domains (or more generally for smoothly bounded pseudoconvex domains of finite type) has been open since,   one reason being that Lempert's theory  is not available for such domains.

 In this paper we avoid this issue following a  different approach by means of the horofunction boundary $\partial_HD$ and the corresponding compactification  $\overline{D}^H$ (Definition \ref{horofunctionboundary}).
These notions were introduced by Gromov in 1981 \cite{Gromov}, and  have been recently studied in  several research areas including geometry of  bounded convex domains of $\R^q$  with the Hilbert metric \cite{walsh}, Teichm\"uller spaces \cite{alessandrini},   and random walks on groups of isometries \cite{karlssonannals,tiozzo}.
The existence of the limit \eqref{abatelimit} is a consequence of the existence of a  continuous map $\Phi\colon \overline D\to \overline{D}^H$ extending the identity.
From this perspective, horospheres centered at $\xi\in \partial D$ coincide with sublevel sets of a horofunction centered at $\Phi(\xi)\in \partial_HD$.

If $D$ is strongly pseudoconvex, then Balogh and Bonk \cite{BaBo} proved that $D$ is Gromov hyperbolic and that there exists an  homeomorphism $\overline D \rightarrow \overline{D}^G$ extending the identity map. 
Hence the existence of a continuous map   $\overline{D}^G\to\overline{D}^H$ extending the identity  would  imply the existence of the limit \eqref{abatelimit}. On the other hand, there exist  proper geodesic Gromov hyperbolic metric spaces for which such  map cannot exist (see  Example \ref{wwexample}).
In Section \ref{approachingsection} we introduce a sufficient condition for the existence of such a map: we say that a proper geodesic  metric space $(X,d)$ has  \textit{approaching geodesics} if any two asymptotic geodesic rays $\gamma,\sigma$ are \textit{strongly asymptotic}, meaning that there exist $T \in \mathbb{R}$ such that 
$$\displaystyle \lim_{t\to+\infty}d(\gamma(t),\sigma(t+T))= 0.$$ 
This condition holds for example for $CAT(-1)$ spaces, 0-hyperbolic spaces, or Gromov hyperbolic bounded convex  domains of $\R^n$ endowed with the Hilbert metric.   Our first result, proved in Theorem \ref{abstract}  is the following:
\begin{theorem}\label{intro}
Let $(X,d)$ be a proper geodesic Gromov hyperbolic metric space with approaching geodesics. Then the Busemann map induces a  homeomorphism extending the identity $ \overline{X}^G\to \overline{X}^H.$
\end{theorem}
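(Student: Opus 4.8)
\section*{Proof proposal (plan)}

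The plan is to produce the extension $\Psi\colon\overline X^G\to\overline X^H$ of the identity whose restriction to $\partial_GX$ is the Busemann map $\xi\mapsto b_\xi$ --- where $b_\xi=b_\gamma:=\lim_{t\to+\infty}d(\cdot,\gamma(t))-t$ for any geodesic ray $\gamma$ issuing from the basepoint $o$ and representing $\xi$, and $\Psi(x)=h_x:=d(\cdot,x)-d(o,x)$ for $x\in X$ --- and then to check that $\Psi$ is a continuous bijection. Since $\overline X^G$ is compact (here properness is used) and $\overline X^H\subseteq C(X)$ is Hausdorff and metrizable, a continuous bijection is automatically a homeomorphism, so this suffices.

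First I would verify that the Busemann map is well defined: if $\gamma,\sigma$ issue from $o$ and are both asymptotic to $\xi$, the approaching geodesics hypothesis gives $T,S$ with $d(\gamma(t+T),\sigma(t+S))\to0$, and a one-line triangle-inequality estimate then shows that $b_\gamma-b_\sigma$ is the constant $S-T$, which must vanish since both functions are $0$ at $o$. Injectivity of $\Psi$ is elementary: $h_x$ attains its minimum only at $x$, so $x\mapsto h_x$ is injective; a Busemann function is unbounded below while each $h_x$ is bounded below by $-d(o,x)$, so $\Psi(X)\cap\Psi(\partial_GX)=\emptyset$; and if $\gamma,\sigma$ point from $o$ toward distinct $\xi\neq\eta$ then $b_\sigma(\gamma(t))\to+\infty$ (because $(\xi|\eta)_o<\infty$) while $b_\gamma(\gamma(t))=-t$, so $b_\gamma\neq b_\sigma$.

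The heart of the matter is surjectivity, and this is the only place where the hypothesis is genuinely needed. Let $h=\lim_nh_{z_n}\in\partial_HX$; after passing to a subsequence, $z_n\to\xi\in\partial_GX$ and the geodesics $[o,z_n]$ converge, uniformly on compact subsets, to a ray $\gamma$ from $o$ to $\xi$. The inequality $h\leq b_\gamma$ is soft: $t\mapsto d(x,[o,z_n](t))-t$ is nonincreasing, so $h_{z_n}(x)\leq d(x,[o,z_n](t))-t$, and letting $n\to\infty$ and then $t\to\infty$ gives the claim. For the reverse inequality fix $x$, pick a ray $\sigma$ from $x$ to $\xi$, and use that $\gamma$ and $\sigma$ are \emph{strongly} asymptotic: $\gamma(t)$ lies within an error $\delta_0(t)\to0$ of some $\sigma(t'(t))$, hence, since $[x,z_n]\to\sigma$, within $\delta_0(t)+o_n(1)$ of the geodesic $[x,z_n]$. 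Plugging this near-incidence into the triangle inequality along $[x,z_n]$ yields $d(x,z_n)\geq d(x,\gamma(t))+d(\gamma(t),z_n)-2\delta_0(t)-o_n(1)$; subtracting $d(o,z_n)$ and using $d(\gamma(t),z_n)-d(o,z_n)\to-t$ --- which holds because $[o,z_n](t)\to\gamma(t)$ forces the Gromov product $(o|z_n)_{\gamma(t)}\to0$ --- gives $h(x)\geq d(x,\gamma(t))-t-2\delta_0(t)$, and letting $t\to+\infty$ gives $h(x)\geq b_\gamma(x)$. Thus $h=b_\gamma=b_\xi$, so every horofunction is a Busemann function.

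Finally I would deduce continuity of $\Psi$. At interior points this is just joint continuity of the distance. For $z_n\to\xi$ in $\overline X^G$ with $z_n\in X$: in the compact metrizable space $\overline X^H$ every subsequential limit of $(h_{z_n})$ is a horofunction centered at $\xi$, hence equals $b_\xi$ by the surjectivity argument above, so $h_{z_n}\to b_\xi=\Psi(\xi)$. The case of boundary points $\eta_n\to\xi$ follows by a diagonal argument, writing each $b_{\eta_n}$ as a limit of functions $h_z$ and choosing $z_n$ simultaneously close to $b_{\eta_n}$ in $\overline X^H$ and converging to $\xi$ in $\overline X^G$. I expect the principal difficulty to be exactly the surjectivity step, more precisely the passage from the coarse fellow-travelling of thin triangles to an \emph{asymptotically exact} near-incidence of $\gamma(t)$ with $[x,z_n]$: it is strong asymptoticity --- an error going to $0$, not merely the $O(\delta)$ afforded by hyperbolicity alone --- that makes the final limit $t\to+\infty$ collapse the inequality to an equality, which is precisely why the weaker ``asymptotic'' property would not suffice.
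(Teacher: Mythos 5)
Your proposal is correct, and the core analytic step coincides with the paper's: both arguments take the limit rays of the geodesic segments from the two base points ($o$ and $x$ in your notation, $p$ and $z$ in the paper's) to the same Gromov boundary point, invoke the approaching geodesics hypothesis to make these two rays strongly asymptotic, and use the resulting vanishing error to squeeze $d(x,z_n)-d(o,z_n)$ onto the Busemann function; your diagonal argument for sequences in $\partial_GX$ is also the paper's second step almost verbatim. Where you genuinely diverge is the global architecture. The paper does not prove bijectivity at all: it cites the Webster--Winchester comparison $\overline{X}^H\geq \overline{X}^G$, proves only that $\Psi$ is continuous (hence $\overline{X}^G\geq\overline{X}^H$), and concludes by the abstract fact that two Hausdorff compactifications each dominating the other are equivalent. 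You instead build a self-contained proof: well-definedness of the Busemann map from strong asymptoticity, injectivity via finiteness of Gromov products of distinct boundary points and the bounded/unbounded-below dichotomy, surjectivity by the two-inequality squeeze, and then compactness of $\overline{X}^G$ plus Hausdorffness of $\overline{X}^H$. Your route is longer but avoids the external reference and yields as a by-product, inside the proof, that every horofunction is a Busemann point (which the paper only records afterwards as a corollary of the theorem); the paper's route is shorter precisely because injectivity and surjectivity are outsourced to Webster--Winchester. One small point to make precise in your surjectivity step: an arbitrary ray $\sigma$ from $x$ to $\xi$ need not be the limit of the segments $[x,z_n]$, so you should either define $\sigma$ as a subsequential limit of these segments (as the paper does) or add the one-line remark that such a limit is strongly asymptotic to your chosen $\sigma$, so the near-incidence of $\gamma(t)$ with $[x,z_n]$ up to an error $\delta_0(t)+o_n(1)$ still holds; with that fix the argument is complete.
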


In Sections  \ref{sec:strong} and \ref{sec:finite}  we study whether bounded strongly pseudoconvex domains and convex domains of finite D'Angelo type have approaching geodesics. Our approach is similar in both cases.
The idea is the following: assume  by contradiction that there exists two asymptotic  geodesic rays $\gamma,\sigma$ which are not strongly asymptotic. 
Then rescale along a sequence of points in $\gamma$, to obtain in the limit two different geodesic lines of some \textit{model domain}, connecting the same two points in the boundary.
This leads to a contradiction if we know that  the model domain admits a unique geodesic line joining any two points at the boundary.

In Section \ref{sec:strong} we study bounded strongly pseudoconvex domains in $\C^q$. 
We use the squeezing function to rescale the domain, and the corresponding model domain is the unit ball $\B^q$, therefore strongly pseudoconvex domains have approaching geodesics.
Since our arguments only rely on the boundary behavior of the squeezing function, we also obtain that bounded convex domains satisfying $\lim_{z\to\partial D}s_D(z)=1$ have approaching geodesics.

In Section \ref{sec:finite} we study bounded convex domains of finite type in the sense of D'Angelo. Such spaces are Gromov hyperbolic according to a recent result of Zimmer \cite{Zim1}. 
In this case,  after rescaling in the normal direction, one obtains  models of the form
$$\left\lbrace (z,w)\in \C^{q-1}\times \C\,:\,{\rm Re\,} w> H(z)\right\rbrace,$$ 
where $H\colon \C^{q-1}\to \R$ is a convex non-degenerate non-negative weighted homogeneous polynomial. 
We do not know whether 
the straight half-line $\{z=0,{\rm Im}\, w=0\}$ is the unique geodesic line connecting $0$ to $\infty$. 
Hence, we assume that the two asymptotic geodesic rays lie in two complex geodesics $\gamma,\sigma$. We prove that both $\gamma$ and $\sigma$ rescale into the same complex geodesic $\{z=0\}$, and 
this is possible only when the asymptotic rays lying in $\gamma$ and $\sigma$ are strongly asymptotic. It turns out that this weaker form of the approaching geodesics property is enough to conclude that the Gromov and horofunction compactifications are equivalent (see Theorem \ref{weaknotion}).

Our main results can thus be stated as follows.
\begin{theorem}\label{intro2}
Let $(D,d_D)$ be either
\begin{enumerate}
\item a bounded strongly pseudoconvex domain with $C^2$ boundary in $\C^q$, 
\item a bounded convex domain such that $\displaystyle \lim_{z\to \partial D} s_D(z)=1$,
\item a bounded convex domain in $\C^q$ with $C^\infty$ boundary,  of finite type in the sense of D'Angelo.
\end{enumerate}
Then any two of the compactifications $\overline{D}^G,\overline{D}^H, \overline D$ admits homeomorphisms extending the identity. 
Hence for all $\xi\in \partial D$ and $p\in D$ the limit \eqref{abatelimit} exists, and thus  big and small horospheres coincide, that is 
$$E^b_p(\xi,R)=E^s_p(\xi,R),\quad \forall \,R>0.$$
\end{theorem}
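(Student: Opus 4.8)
The plan is to derive Theorem~\ref{intro2} by combining the abstract equivalence statements of Section~\ref{approachingsection} with the geometric input of Sections~\ref{sec:strong} and~\ref{sec:finite}, and then to deduce the statement about horospheres from the mere existence of a continuous identity extension $\overline{D}\to\overline{D}^H$. For each of the three families I would first record that $(D,d_D)$ is a proper geodesic Gromov hyperbolic metric space for which the identity extends to a homeomorphism $\overline{D}\to\overline{D}^G$: in case (1) this is the theorem of Balogh--Bonk \cite{BaBo}, in case (3) it is Zimmer's theorem \cite{Zim1}, and in case (2) it follows from the squeezing-function estimates developed in Section~\ref{sec:strong}. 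Next I would invoke the approaching geodesics property --- proved in Section~\ref{sec:strong} for (1) and (2), and in its weaker complex-geodesic form in Section~\ref{sec:finite} for (3) --- so that Theorem~\ref{abstract} (respectively Theorem~\ref{weaknotion}) supplies a homeomorphism $\overline{D}^G\to\overline{D}^H$ extending the identity. Composing these homeomorphisms and their inverses then produces homeomorphisms extending the identity between any two of $\overline{D}$, $\overline{D}^G$, $\overline{D}^H$.

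To obtain the equality of horospheres, let $\Phi\colon\overline{D}\to\overline{D}^H$ be the resulting homeomorphism extending the identity, fix $\xi\in\partial D$ and $p\in D$, and write $h_\xi:=\Phi(\xi)\in\partial_H D$, normalized so that $h_\xi(p)=0$. By the very definition of $\overline{D}^H$, a sequence $w_n\in D$ with $w_n\to\xi$ converges to $\Phi(\xi)$ in $\overline{D}^H$ if and only if the functions $z\mapsto d_D(z,w_n)-d_D(w_n,p)$ converge locally uniformly on $D$ to $h_\xi$. Since $\Phi$ is a well-defined continuous map restricting to the identity on $D$, any sequence $w_n\to\xi$ in $\overline{D}$ satisfies $w_n=\Phi(w_n)\to\Phi(\xi)$ in $\overline{D}^H$; as the limit $h_\xi$ does not depend on the chosen sequence, the limit \eqref{abatelimit} exists (in fact locally uniformly in $z$) and equals $h_\xi(z)$. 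Therefore the $\liminf$ and $\limsup$ in \eqref{bigsmallhor} coincide, and for every $R>0$
$$E^b_p(\xi,R)=E^s_p(\xi,R)=\{z\in D\colon h_\xi(z)<\log R\}.$$

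All the substantive difficulty is confined to the results being quoted: the Gromov hyperbolicity of convex domains of finite D'Angelo type, the rescaling arguments of Sections~\ref{sec:strong} and~\ref{sec:finite} that upgrade asymptotic geodesic rays to strongly asymptotic ones, and the construction of the Busemann map in Theorem~\ref{abstract}. What is left here is essentially bookkeeping, so there is no serious obstacle internal to this proof; the two points that nonetheless deserve attention are matching the base-point normalization of horofunctions with the expression in \eqref{abatelimit}, and confirming that in case (2) the identification $\overline{D}\cong\overline{D}^G$ is genuinely at hand (so that all three compactifications, and not merely $\overline{D}^G$ and $\overline{D}^H$, are pairwise equivalent).
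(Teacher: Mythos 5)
Your proposal is correct and follows essentially the same route as the paper: quote Balogh--Bonk, Zimmer, and the results of \cite{BrGaZi,Zim6} for the equivalence $\overline{D}\cong\overline{D}^G$, apply Theorem~\ref{abstract} (resp.\ Theorem~\ref{weaknotion}) via the (weak) approaching geodesics property established in Sections~\ref{sec:strong} and~\ref{sec:finite}, and then read off the existence of the limit \eqref{abatelimit} from the continuous extension $\overline{D}\to\overline{D}^H$, exactly as the paper does in deducing Theorem~\ref{intro2} from Corollary~\ref{ohyeah1.5.1} and Theorem~\ref{mainfinitetype}. The only minor slip is attributing the equivalence $\overline{D}\cong\overline{D}^G$ in case (2) to the squeezing estimates of Section~\ref{sec:strong} (those give the approaching geodesics property; the Gromov--euclidean equivalence comes from \cite[Lemma 20.8]{Zim6} and \cite[Theorem 1.5]{BrGaZi}), a point you yourself flagged for verification.
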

Theorem \ref{intro2} is a consequence of Corollary  \ref{ohyeah1.5.1} and Theorem \ref{mainfinitetype}. We remark that the equivalence between Gromov and Euclidean compactifications  had already been proven in \cite{BaBo,Zim1} for cases (1) and (3), and in case (2) this equivalence follows immediately from results in \cite{BrGaZi,Zim6}. 
Notice also that since the horofunction and  Gromov compactifications are metric invariants, the  equivalence between 
$\overline{D}^G$ and $\overline{D}^H$ also holds for every domain of $\C^q$ which is biholomorphic to the ones in (1),(2), and (3).


In the last section we apply our results to the study of the  forward and backward dynamics of non-expanding self-maps of a proper geodesic Gromov metric space, obtaining in particular a generalization of the classical Julia's lemma when the horofunction and  Gromov compactifications are equivalent.

\medskip {\bf Acknowledgements.} We want to thank  Andrew Zimmer for useful comments and remarks.

\medskip{\bf Competing interests declaration}
On behalf of all authors, the corresponding author states that there is no conflict of interest.

\section{Background}
\label{sec:background}

\begin{definition}\label{defcomp}
Let $X$ be a topological space. A \textit{compactification} of $X$ is a couple $(i,Y)$ where $Y$ is a compact topological space and $i\colon X\to Y$ is a topological embedding such that $\overline{i\left(X\right)}=Y$. A compactification is called \textit{Hausdorff} if $Y$ is Hausdorff. Define a relation on the set of compactifications of $X$ setting $(i_1,Y_1)\geq (i_2,Y_2)$ if and only if there exists a continuous map $h\colon Y_1\to Y_2$ such that $h\circ i_1=i_2$. Notice that $h$ is necessarily surjective.  If $h$ is a homeomorphism, the two compactifications are \textit{topologically equivalent}.
\end{definition}
For the sake of simplicity the compactification $(Y,i)$ will be denoted as $Y$.

\begin{remark}
Assume that  $Y_1$ and $Y_2$ are two Hausdorff compactifications of $X$. If $Y_1\geq Y_2$ and $Y_2\geq Y_1$, then  $Y_1$ and $Y_2$ are topologically equivalent.
\end{remark}
We refer the reader to \cite{kelly} for more details about compactifications of a topological space.

 Our main result relates two natural compactifications: the horofunction compactification and the Gromov compactification.  We now recall their definitions.

\begin{definition}
Let $(X,d)$ be a metric space.
 A {\sl  geodesic} is a map $\gamma$ from an interval $I\subset \R$ to $X$ which is an isometry with respect to the Euclidean distance on $I$ and the distance on $X$, that is for all $s,t\in I$, $$d(\gamma(s),\gamma(t))=|t-s|.$$
If the interval is closed and bounded (resp. $[0,+\infty)$,  $(-\infty,+\infty)$) we call $\gamma$ a {\sl geodesic segment}  (resp. {\sl geodesic ray}, {\sl geodesic line}).
 A metric space $(X,d)$ is \textit{geodesic} if any two points are joined by a geodesic segment.
 A metric space $(X,d)$ is \textit{proper} if and only if every closed ball is compact.
\end{definition}
\begin{remark}
A proper metric space is locally compact, hemicompact, and second countable.   Notice also that if a topological space $X$ is locally compact, then for any compactification $Y$ of $X$ we have that $Y\setminus X$ is compact.
\end{remark}

Given a proper metric space $(X,d)$, we write $C(X)$ for the space of continuous real functions on $X$. 
Endow $C(X)$ with the topology given by uniform convergence on compact subsets. 
\begin{definition}[Horofunction compactification]\label{horofunctionboundary}
Let $(X,d)$ be a proper metric space. 
Let $C_*(X)$ be the quotient of $C(X)$ by the subspace of constant functions. 
Given $f\in C(X)$,  we denote its equivalence class by $[f] \in C_*(X)$. 

Consider the embedding 
$$i_H\colon X\longrightarrow C_*(X)$$ which sends a point $x\in X$ to  the equivalence class of the function  $d_x\colon y\mapsto d(x,y).$ 
The \textit{horofunction compactification} $ \overline{X}^H$ of $X$ is the closure of $i_H(X)$ in  $C_*(X)$. 
The \textit{horofunction boundary} of $X$ is the (compact) set
$$\partial_HX:=   \overline{X}^H\setminus i_H(X).$$
Let $a \in \partial_H X$. An \textit{horofunction} centered at $a\in \partial_H X$ is an element $h\in C\left(X\right)$ satisfying $[h]=a$. 
For every $p\in X$, the unique horofunction centered at $a$ and vanishing at $p$ is denoted by $ h_{a,p}$.
The \textit{horosphere (or horoball)}  centered at $a$ of radius $R>0$ and with base point $p\in X$ is the level set
\begin{equation}
\label{eq:horo}
E_p(a,R):=\left\lbrace h_{a,p}<\log R\right\rbrace\subseteq X.
\end{equation}
\end{definition}

\begin{remark}
Denote $C_{p}(X)$ the subspace of $C(X)$ of all functions vanishing at a base point $p\in X$. 
The linear map $[f]\longmapsto f-f(p)$ induces a homeomorphism between $C_*(X)$ and $C_{p}(X)$.
A sequence $ \left([f_n]\right)$ converges to $[f]$ in $C_*(X)$ if and only if $\displaystyle \left(f_n-f_n\left(p\right)\right)$ converges to $\left(f-f\left(p\right)\right)$ in $C\left(X\right)$. 
Definition \ref{horofunctionboundary} has the advantage to show that the horofunction compactification is uniquely defined, and does not depend on the choice of the base point. In practice, it is more convenient to work with $C_p(X)$ instead of $C_*(X)$. 
\end{remark}

Busemann points are classical examples of horofunctions. Given a metric space $\left(X,d\right)$ we denote by ${\mathscr{R}(X)}$ the set of geodesic rays of $X$. For every geodesic ray $\gamma \in \mathscr{R}\left(X\right)$ the family $$\Big(x\mapsto d\big(x,\gamma(t)\big)-d\big(\gamma(t),\gamma(0)\big)\Big)_{t\geq 0}$$ is a pointwise non-decreasing, locally bounded from above family of $1$-Lipschitz maps. Thus it converges (uniformly on compact subsets) as $t\to +\infty$. This observation makes the following definitions meaningful:
\begin{definition}
Let $(X,d)$ be a proper metric space and $\gamma\colon \R_{\geq 0}\to X$ be a geodesic ray. The \textit{Busemann function} $B_\gamma\colon X\times X\rightarrow \R$ associated with $\gamma$ is defined as
$$B_\gamma(x,y):=\displaystyle \lim_{t\to +\infty} d(x,\gamma(t))-d(\gamma(t),y).$$
For all $y\in X$, the function $x\mapsto B_\gamma(x,y)$ is a horofunction, and its class  $[B_\gamma]\in \partial_HX$ does not depend on $y\in X$.

The \textit{Busemann map} $B\colon\mathscr R(X)\rightarrow \partial_HX$ is defined as $B(\gamma)=[B_\gamma]$.
Elements of $B\left(\mathscr{R}(X)\right)$ are called Busemann points.
\end{definition}

Next we turn our attention to the Gromov compactification.

\begin{definition}
Let $(X,d)$ be a proper metric space.
Two geodesic rays $\gamma,\sigma\in \mathscr{R}(X)$ are \textit{asymptotic} if $t\mapsto d(\gamma(t),\sigma(t))$ is  bounded.
\end{definition}

We recall that a complete length space is proper and geodesic. This applies in particular when $D\subseteq \mathbb C^{q}$  is a complex hyperbolic domain endowed with the Kobayashi metric.

\begin{definition}[Gromov compactification]\label{visual}
Let $(X,d)$ be a proper geodesic Gromov hyperbolic metric space. 
On $\mathscr{R}(X)$, the relation 
\[\gamma \sim_r \sigma \iff \text{$\gamma$ and $\sigma$ are asymptotic}\]
is an equivalence relation. 
The \textit{Gromov boundary}  of $X$ is defined as $\partial_GX:=\mathscr{R}(X)/_{\sim_r}.$
The \textit{Gromov compactification}  of $X$ is the set $\overline X^G:=X\sqcup \partial_GX$ endowed with a  metrizable topology \cite[Chapter III.H]{BH} which makes it a compactification in the sense of Definition \ref{defcomp}. We now describe which sequences converge to a point in the boundary.
Fix a base point $p$. A sequence $(x_n)$ in $\overline X^G$ converges to a point $\xi\in \partial_GX$ if and only if for any subsequence of $(\gamma_n)$ that converges uniformly on compact subsets, its limit belongs to the equivalence class  $\xi$. Here $\gamma_n$ is defined as follows:
if $x_n\in X$, then  $\gamma_n\colon [0,T_n]\to X$ is a geodesic segment connecting $p$ to $x_n$, while if $x_n\in \partial_GX$, then 
$\gamma_n\colon \R_{\geq 0}\to X$ is a geodesic ray representing  $x_n$ with $\gamma_n(0)=p$.
\end{definition}
\begin{remark}
It is possible to show that the choices of $p$ and $(\gamma_n)$ in the definition are not relevant.
Furthermore since $(X,d)$ is proper, by Ascoli--Arzel\`a Theorem, every subsequence of $(\gamma_n)$ admits a subsequence that converges uniformly on compact subsets to a geodesic ray.
In particular we can always find convergent subsequences of $(\gamma_n)$ as in the definition.
\end{remark}
\begin{remark}
\label{infinitysequences}
There is an alternative description of the Gromov boundary $\partial_GX$.
Fix $p\in X$.
We say that a sequence $\left(x_n\right)$ \textit{goes to infinity in the Gromov sense} if  
\[\left(x_n\vert x_m\right)_p:=\frac{1}{2}\left(d\left(x_n,p\right)+d\left(x_m,p\right)-d\left(x_n,x_m\right)\right)\underset{m,n \to +\infty}{\longrightarrow}+\infty.\]
We denote by $\mathcal{I}^G$ the set of sequences going to infinity in the Gromov sense. Since $X$ is Gromov hyperbolic,  the relation
\[\left(x_n\right)\sim_s \left(y_n\right) \iff \left(x_n\vert y_n\right)_p\underset{n \to +\infty}{\longrightarrow}+\infty\]
is an equivalence relation on $\mathcal{I}^G$. 
Both definitions do not depend on the point $p$ chosen.
The set $\mathcal{I}^G/{\sim_s}$ admits a canonical bijection with $\partial_GX$, and so we can identify them. 
Furthermore a sequence $(x_n)$ in $X$ converges to $\xi\in \partial_G X$ if and only if the sequence $(x_n)$ goes to infinity in the Gromov sense and $[(x_n)]=\xi$.	
\end{remark}

It is natural to ask whether there is a relation between these two  compactifications. Webster and Winchester prove in \cite{WWpacific} that if $(x_n), (y_n)$ are sequences in $X$ converging to the same point in $\partial_HX$, then 
$(x_n)\sim_s(y_n)$. It easily follows (see \cite[Proposition 4.6]{WWpacific})  that
\begin{equation}\label{WW}
\overline{X}^H\geq  \overline{X}^G.
\end{equation}
If the space $X$ is  $CAT(0)$ or $0$-hyperbolic, then  the two compactifications are actually topologically equivalent (see \cite[Theorem 8.13]{BH} and \cite[Theorem 4.7]{WWpacific}). It follows by a direct computation that this is the case also for Gromov hyperbolic  bounded convex domains of $\R^q$ endowed with the Hilbert metric.

In general the two compactifications are not topologically equivalent, as the following example adapted from \cite{WWtrans} shows.
\begin{example}\label{wwexample}
Let $X$ be the following closed set of $\R^2$:
$$X=\big(\mathbb N\times(-1,1)\big)\cup\big(\R_{\geq0}\times\{-1,1\}\big). $$
\begin{figure}[ht]
\begin{tikzpicture}[scale=2,  mydot/.style={circle, fill=white, draw, outer sep=0pt,inner sep=1.5pt}]
    \draw[line width=.5mm] (0,1)--(4.7,1);
    \draw[line width=.5mm] (0,-1)--(4.7,-1);
    \draw[line width=.5mm] (0,1)--(0,-1);
    \draw[line width=.5mm] (1,1)--(1,-1);
    \draw[line width=.5mm] (2,1)--(2,-1);
    \draw[line width=.5mm] (3,1)--(3,-1);
    \draw[line width=.5mm] (4,1)--(4,-1);

    \node at (5,1) {$\dots$};
    \node at (5,0) {$\dots$};
    \node at (5,-1) {$\dots$};

    \node[mydot]  at (0,1) {};
    \node[below right] at (0,1) {$(0,1)$};
    \node[mydot]  at (1,1) {};
    \node[below right] at (1,1) {$(1,1)$};
    \node[mydot]  at (2,1) {};
    \node[below right] at (2,1) {$(2,1)$};
    \node[mydot]  at (3,1) {};
    \node[below right] at (3,1) {$(3,1)$};
    \node[mydot]  at (4,1) {};
    \node[below right] at (4,1) {$(4,1)$};

    \node[mydot]  at (0,0) {};
    \node[below right] at (0,0) {$(0,0)$};
    \node[mydot]  at (1,0) {};
    \node[below right] at (1,0) {$(1,0)$};
    \node[mydot]  at (2,0) {};
    \node[below right] at (2,0) {$(2,0)$};
    \node[mydot]  at (3,0) {};
    \node[below right] at (3,0) {$(3,0)$};
    \node[mydot]  at (4,0) {};
    \node[below right] at (4,0) {$(4,0)$};

    \node[mydot]  at (0,-1) {};
    \node[below right] at (0,-1) {$(0,-1)$};
    \node[mydot]  at (1,-1) {};
    \node[below right] at (1,-1) {$(1,-1)$};
    \node[mydot]  at (2,-1) {};
    \node[below right] at (2,-1) {$(2,-1)$};
    \node[mydot]  at (3,-1) {};
    \node[below right] at (3,-1) {$(3,-1)$};
    \node[mydot]  at (4,-1) {};
    \node[below right] at (4,-1) {$(4,-1)$};
\end{tikzpicture}
\caption{The set $X$}

\end{figure}
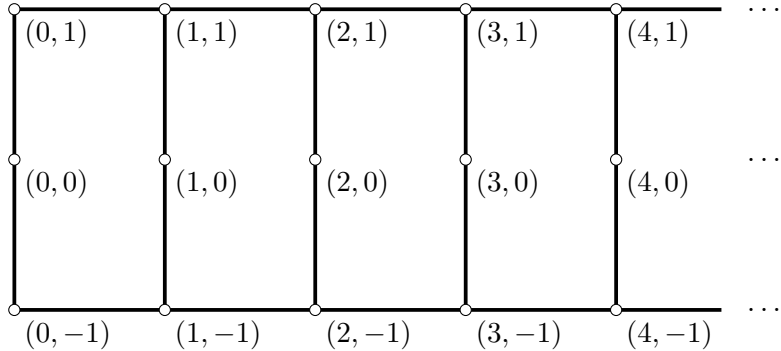

We define a distance on $X$ as the induced length distance of $\R^2$, i.e.
$$d(x,y):=\inf \{\mbox{length}(\gamma)\,|\, \gamma:[0,1]\longrightarrow X,\, \text{piecewise} \,\,C^1,\, \gamma(0)=x,\gamma(1)=y \}, $$
which makes it a proper geodesic Gromov hyperbolic metric space.
Given $|a_1-a_2|>1$, we have 
$$d((a_1,b_1),(a_2,b_2))=2+|a_1-a_2|-|b_1+b_2|.$$ 
If $\gamma:\R_{\geq0}\longrightarrow X$ is a geodesic ray then there exists $T,a\geq0$, and $\varepsilon\in\{-1,1\}$ such that $$\gamma(t)=(t+a,\varepsilon), \qquad \forall t\geq T.$$ It follows that
 every two geodesic rays are asymptotic, and  therefore $\partial_GX$ consists of a single point. We now show that the horofunction boundary $\partial_HX$ is homeomorphic to a segment.
Fix the base point $(0,0)$. Define
$$h_{(\alpha,\beta)}(a,b):=d((a,b),(\alpha,\beta))-d((\alpha,\beta),(0,0)), \qquad (\alpha,\beta),(a,b)\in X. $$
 For  $\alpha>a+1$ we have
$$h_{(\alpha,\beta)}(a,b)=-a+|\beta|-|b+\beta|. $$
A sequence $((\alpha_n,\beta_n))$ is compactly divergent if and only if $\alpha_n\rightarrow +\infty$.
Furthermore the function $h_{(\alpha_n,\beta_n)}$ converges to  a horofunction in $C(X)$ if and only if $\beta_n\to\widehat{\beta}\in[-1,1]$.
The horofunction $\widehat{h}$ is then given by $\widehat{h}(a,b)=-a+|\widehat{\beta}|-|b+\widehat{\beta}|$, so it is easy to see that $\partial_H X$ is homeomorphic to $[-1,1]$.
In this example $-1,1\in\partial_H X$ are the only Busemann points. 

\end{example}

Notice that the two geodesic rays $\gamma(t)=(t,1)$ and $\sigma(t)=(t,-1)$ are asymptotic, but  $\inf_{s\geq 0}d( \gamma(t),\sigma(s))$ does not converge to 0 as $t\to+\infty$. 
In the next section  we prove that the compactifications $ \overline{X}^H$ and $\overline{X}^G$ are topologically equivalent, provided that this behavior does not occur.

\section{The approaching geodesics property}\label{approachingsection}
\begin{definition}
Let $(X,d)$ be a proper geodesic metric space.
 Two geodesic rays $\gamma,\sigma\in \mathscr{R}(X)$ are \textit{strongly asymptotic} if there exist $T\in \R$  such that
$$\displaystyle \lim_{t\to +\infty}d(\gamma(t), \sigma(t+T))=0.$$ We say that a proper geodesic   metric space $X$ has \textit{approaching geodesics} if asymptotic rays are strongly asymptotic.
\end{definition}
Clearly strongly asymptotic rays are asymptotic.
\begin{remark}
 Notice  that
$CAT(-1)$ spaces \cite[Proposition 1.4]{CharTsa} and $0$-hyperbolic spaces \cite[Lemma 3.3]{BH} have approaching geodesics.   It is also easy to see that  Gromov hyperbolic bounded convex  domains of $\R^n$ endowed with the Hilbert metric have approaching geodesics.
\end{remark}

\begin{proposition}\label{nondip}
Let $(X,d)$ be a proper geodesic  metric space. If two geodesic rays $\gamma,\sigma\in \mathscr{R}(X)$ are strongly asymptotic, then  $B_\gamma=B_\sigma.$
In particular $[B_\gamma]=[B_\sigma].$
\end{proposition}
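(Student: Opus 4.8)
The plan is to fix $x,y\in X$ and to show directly that $B_\gamma(x,y)=B_\sigma(x,y)$ by a term-by-term comparison of the two defining limits. First I would record that, since the limit defining a Busemann function is taken as $t\to+\infty$, it is unaffected by a shift of the parameter: $B_\gamma(x,y)=\lim_{t\to+\infty}\big(d(x,\gamma(t+T))-d(\gamma(t+T),y)\big)$ and likewise $B_\sigma(x,y)=\lim_{t\to+\infty}\big(d(x,\sigma(t+S))-d(\sigma(t+S),y)\big)$, where $T,S\geq 0$ are the constants furnished by the strong asymptoticity hypothesis. Both limits exist thanks to the monotonicity and local boundedness observation made just before the definition of Busemann functions, so there is nothing to check on that account.

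Next I would estimate the difference of the two bracketed quantities by applying the triangle inequality twice: $\big|d(x,\gamma(t+T))-d(x,\sigma(t+S))\big|\leq d(\gamma(t+T),\sigma(t+S))$ and $\big|d(\gamma(t+T),y)-d(\sigma(t+S),y)\big|\leq d(\gamma(t+T),\sigma(t+S))$. Consequently the quantity $\big(d(x,\gamma(t+T))-d(\gamma(t+T),y)\big)-\big(d(x,\sigma(t+S))-d(\sigma(t+S),y)\big)$ has absolute value at most $2\,d(\gamma(t+T),\sigma(t+S))$, which tends to $0$ as $t\to+\infty$ by hypothesis. Letting $t\to+\infty$ therefore yields $B_\gamma(x,y)=B_\sigma(x,y)$, and since $x,y\in X$ were arbitrary, $B_\gamma=B_\sigma$ as functions on $X\times X$.

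The final assertion is then immediate: for any fixed $y\in X$ the horofunctions $x\mapsto B_\gamma(x,y)$ and $x\mapsto B_\sigma(x,y)$ coincide, hence they represent the same class in $C_*(X)$, i.e. $\overline{B_\gamma}=\overline{B_\sigma}$. I do not anticipate any real obstacle in this argument; the only steps deserving an explicit word are the shift-invariance of the defining limit and the existence of the relevant limits, both of which are already available from the discussion preceding the definition of the Busemann function.
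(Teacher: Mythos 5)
Your argument is correct and is essentially the paper's own proof: both rest on the shift-invariance of the defining limit and the triangle-inequality bound by $2\,d(\gamma(t+T),\sigma(t+S))$, which tends to $0$ by hypothesis. Nothing further is needed.
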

\begin{proof}
Let $T \in \R$ such that $\displaystyle \lim_{t\to +\infty} d(\gamma(t), \sigma(t+T))=0$. Let $x,p\in X$. Then
$$\bigg\lvert \underset{\underset{t \to +\infty}{\longrightarrow}B_\gamma(x,p)}{\underbrace {d(x,\gamma(t))-d(\gamma(t),p)}}-(\underset{\underset{t \to +\infty}{\longrightarrow}B_\sigma(x,p)}{\underbrace {d(x,\sigma(t+T))-d(\sigma(t+T),p))}}\bigg\rvert\leq 2d(\gamma(t),\sigma(t+T)).$$
We obtain the result by letting $t\to +\infty$.
\end{proof}


\begin{corollary}
Let $(X,d)$ be a proper geodesic Gromov hyperbolic metric space with  approaching geodesics.
Then the Busemann map induces a well-defined map
$$ \Psi\colon \partial_GX\to \partial_HX, \quad \left[\gamma\right]\mapsto [B_\gamma].$$
\end{corollary}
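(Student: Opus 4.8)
The plan is to observe that the statement is an immediate consequence of Proposition \ref{nondip} together with the standing approaching geodesics hypothesis. Recall that by definition $\partial_GX=\mathscr{R}(X)/{\sim_r}$, and that the Busemann map $B\colon \mathscr{R}(X)\to \partial_HX$, $\gamma\mapsto \overline{B_\gamma}$, is already well defined and takes values in $\partial_HX$, since Busemann points are horofunctions. Hence to produce the map $\Psi$ it suffices to check that $B$ factors through the quotient by $\sim_r$, that is, that $\gamma\sim_r\sigma$ implies $\overline{B_\gamma}=\overline{B_\sigma}$.

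First I would take two geodesic rays $\gamma,\sigma\in\mathscr{R}(X)$ with $[\gamma]=[\sigma]$ in $\partial_GX$. By definition of $\sim_r$ this means that $t\mapsto d(\gamma(t),\sigma(t))$ is bounded, i.e.\ $\gamma$ and $\sigma$ are asymptotic. Since $(X,d)$ has approaching geodesics, $\gamma$ and $\sigma$ are then strongly asymptotic: there exist $T,S\geq 0$ with $\lim_{t\to+\infty}d(\gamma(t+T),\sigma(t+S))=0$. Now I would invoke Proposition \ref{nondip}, which yields $B_\gamma=B_\sigma$ and in particular $\overline{B_\gamma}=\overline{B_\sigma}$ in $\partial_HX$. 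This shows that the assignment $[\gamma]\mapsto\overline{B_\gamma}$ is independent of the chosen representative, so $\Psi$ is well defined, which completes the argument.

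There is no genuine obstacle here: the two substantive ingredients — that strongly asymptotic rays have the same Busemann class, and that in a space with approaching geodesics asymptotic rays are strongly asymptotic — are precisely Proposition \ref{nondip} and the hypothesis in force. The only remaining work is the purely formal unwinding of the definitions of $\partial_GX$ and of the Busemann map, so the proof amounts to stringing these facts together.
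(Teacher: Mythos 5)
Your proposal is correct and coincides with the paper's intended argument: the corollary is stated without proof precisely because it follows by factoring the Busemann map through $\sim_r$, using the approaching geodesics hypothesis to upgrade asymptotic to strongly asymptotic and then applying Proposition \ref{nondip}. Nothing further is needed.
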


\begin{theorem}\label{abstract}
Let $(X,d)$ be a proper geodesic Gromov hyperbolic metric space with  approaching geodesics. Then the map 
$\Psi\colon  \overline{X}^G\to  \overline{X}^H$ defined by
$$\Psi(\xi)=
\begin{cases}
[d(\,\cdot\,,\xi)], & {\rm if}\ \xi \in X,\\
[B_\gamma],& {\rm if}\ \xi=[\gamma]\in \partial_GX
 \end{cases}
 $$
 is a homeomorphism, and thus $ \overline{X}^G$ and $ \overline{X}^H$ are topologically equivalent.
\end{theorem}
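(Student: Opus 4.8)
The strategy is to show that $\Psi$ is a continuous bijection. Since $\overline{X}^G$ is compact and $\overline{X}^H$ is metrizable (it is a compact subset of $C_*(X)$, which is metrizable because $X$, being proper, is hemicompact), a continuous bijection between them is automatically a homeomorphism, and as $\Psi$ restricts to the identity on $X$ this proves the asserted topological equivalence. Injectivity I would get for free: by \mref{WW} there is a continuous surjection $\pi\colon\overline{X}^H\to\overline{X}^G$ with $\pi|_X=\id$, and one checks that $\pi\circ\Psi=\id_{\overline{X}^G}$. This is clear on $X$; for $\xi=[\gamma]\in\partial_GX$ one has $i_H(\gamma(n))\to\overline{B_\gamma}$ in $\overline{X}^H$ (by definition of the Busemann function) and $\gamma(n)\to[\gamma]$ in $\overline{X}^G$, so continuity of $\pi$ gives $\pi(\Psi([\gamma]))=\lim_n\pi(i_H(\gamma(n)))=\lim_n\gamma(n)=[\gamma]$. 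Hence $\Psi$ has a left inverse and is injective.

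The heart of the matter is the following claim, which I would isolate first: \emph{if $(z_n)\subseteq X$ satisfies $i_H(z_n)\to a\in\partial_HX$ and $z_n\to[\gamma]\in\partial_GX$ in $\overline{X}^G$, then $a=\overline{B_\gamma}$.} To prove it, fix an \emph{arbitrary} base point $q\in X$. The sequence $(z_n)$ is compactly divergent (as $i_H$ is an embedding and $a\notin i_H(X)$), so a subsequence of the geodesics $\gamma_n\colon[0,d(q,z_n)]\to X$ from $q$ to $z_n$ converges uniformly on compacts to a geodesic ray $\gamma''$ with $\gamma''(0)=q$, which by the description of Gromov convergence is asymptotic to $\gamma$. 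By the approaching geodesics property $\gamma''$ is strongly asymptotic to $\gamma$, so $B_{\gamma''}=B_\gamma$ by Proposition~\ref{nondip}. On the other hand, for $y\in X$ and $s\le d(q,z_n)$ the triangle inequality yields $d(y,z_n)-d(q,z_n)\le d(y,\gamma_n(s))-s$; letting $n\to\infty$ along the subsequence and then $s\to+\infty$ gives $h_{a,q}(y)\le B_{\gamma''}(y,q)=B_\gamma(y,q)$. Thus $h_{a,q}\le B_\gamma(\cdot,q)$ for \emph{every} base point $q$. Using $h_{a,q}=h_{a,p}-h_{a,p}(q)$ and the cocycle identity $B_\gamma(\cdot,q)=B_\gamma(\cdot,p)-B_\gamma(q,p)$, this means that $F:=B_\gamma(\cdot,p)-h_{a,p}$ satisfies $F(y)\ge F(q)$ for all $y,q\in X$; hence $F$ is constant, and since $F(p)=0$ we conclude $h_{a,p}=B_\gamma(\cdot,p)$, that is $a=\overline{B_\gamma}$.

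Granting the claim, I would deduce continuity of $\Psi$. On $X$ it is clear: if $\xi_n\to\xi\in X$ in $\overline{X}^G$, then eventually $\xi_n\in X$ and $d(\cdot,\xi_n)\to d(\cdot,\xi)$ uniformly on compacts. Suppose $\xi_n\to[\gamma]\in\partial_GX$; by compactness and metrizability of $\overline{X}^H$ it suffices to show that every subsequential limit $b$ of $(\Psi(\xi_n))$ equals $\overline{B_\gamma}$, so one may assume $\Psi(\xi_n)\to b$ and, after passing to a further subsequence, that either all $\xi_n\in X$ or all $\xi_n\in\partial_GX$. In the first case set $z_n:=\xi_n$; in the second write $\xi_n=[\gamma_n]$ with $\gamma_n(0)=p$, so $\Psi(\xi_n)=\lim_m i_H(\gamma_n(m))$, and choose by a diagonal argument $m_n\to+\infty$ with $i_H(\gamma_n(m_n))\to b$, setting $z_n:=\gamma_n(m_n)$. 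In both cases $i_H(z_n)\to b$ and $z_n\to[\gamma]$ in $\overline{X}^G$ (in the second case because the geodesic segments $[p,z_n]=\gamma_n|_{[0,m_n]}$ and the rays $\gamma_n$ have the same uniform-on-compacts subsequential limits, all lying in $[\gamma]$), so the claim gives $b=\overline{B_\gamma}$. Finally $\Psi(\overline{X}^G)$ is compact, hence closed, and contains the dense set $i_H(X)$, so $\Psi$ is onto, and the proof is complete.

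The step I expect to be the main obstacle is the claim. Comparing $h_{a,p}$ with $B_\gamma(\cdot,p)$ directly along the geodesics $[p,z_n]$ only yields equality up to the hyperbolicity constant, and Example~\ref{wwexample} shows that without the approaching geodesics hypothesis this slack is genuine — there $\partial_HX$ is strictly larger than the set of Busemann points, so $\pi|_{\partial_HX}$ is very far from injective. The device that removes the slack is to run the one–sided triangle estimate from every base point at once: approaching geodesics, via Proposition~\ref{nondip}, forces all the limiting rays $\gamma''$ produced this way to share the single Busemann function $B_\gamma$, and the resulting base-point-uniform bound $h_{a,q}\le B_\gamma(\cdot,q)$ can hold only with equality.
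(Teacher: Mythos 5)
Your argument is correct, and although the outer scaffolding coincides with the paper's (both reduce the theorem to continuity of $\Psi$ and both use the Webster--Winchester relation \eqref{WW} to upgrade continuity to a homeomorphism; your left inverse $\pi$ with $\pi\circ\Psi=\id$ is exactly the composition that the paper's compactification-order remark exploits, and your diagonal treatment of sequences $(\xi_n)\subseteq\partial_GX$ mirrors the second half of the paper's proof), the central estimate is handled by a genuinely different mechanism. For a sequence in $X$ tending to $\xi=[\gamma]$, the paper takes geodesic segments from \emph{both} $z$ and the base point $p$ to $w_{n_k}$, passes to two limit rays in the class $\xi$, and uses strong asymptoticity \emph{quantitatively}: the error term $2d(\gamma(t+T),\sigma(t+S))$ in a two-sided triangle-inequality sandwich tends to $0$, which pins $\lim_k d(z,w_{n_k})-d(w_{n_k},p)$ to $B_\gamma(z,p)$. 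You instead prove only the one-sided bound $h_{a,q}\le B_\gamma(\cdot,q)$, but at \emph{every} base point $q$, invoking the approaching-geodesics hypothesis only qualitatively through Proposition \ref{nondip} (to identify $B_{\gamma''}$ with $B_\gamma$ for the limit ray issued from $q$), and then the cocycle identity converts this family of one-sided bounds into $F(y)\ge F(q)$ for all $y,q$ with $F=B_\gamma(\cdot,p)-h_{a,p}$, forcing $F\equiv 0$. This base-point-variation trick is not in the paper; it buys a softer use of the hypothesis (no quantitative decay, only one family of segments per base point) and isolates a clean, reusable claim identifying all horofunction subsequential limits of sequences converging to $\xi$ with $\overline{B_\gamma}$, at the price of an extra layer (the constancy argument) where the paper gets the two-sided estimate directly. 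Two cosmetic remarks: the compact divergence of $(z_n)$ follows more simply from $z_n\to[\gamma]$ in $\overline X^G$ (which gives $d(p,z_n)\to\infty$) than from your parenthetical about $i_H$; and your appeals to the irrelevance of the base point and of the choice of segments in Definition \ref{visual} are exactly what the remark following that definition guarantees, so they are legitimate.
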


\begin{proof}
By \eqref{WW} we only need to prove that $\Psi$ is continuous and hence $ \overline{X}^G\geq  \overline{X}^H$.  Let $(w_n)$ be a sequence of points in $X$ converging to $\xi\in\partial_GX$. We want to prove that $(\Psi(w_n))$ converges to $\Psi(\xi)$. Given a base point $p\in X$,  this is equivalent to proving that the sequence of functions
$(d(\,\cdot\,,w_n)-d(w_n,p))$ converges uniformly on compact subsets to $B_{\gamma}(\,\cdot\,,p),$ where $\gamma$ is any geodesic ray representing $\xi$ (by Proposition \ref{nondip} the function  $B_{\gamma}(\,\cdot\,,p)$ does not depend on the choice of $\gamma$).

The functions $d(\,\cdot\,,w_n)-d(w_n,p)$ are $1$-Lipschitz, thus by Ascoli--Arzel\`a Theorem it is enough to prove pointwise convergence to $B_{\gamma}(\,\cdot\,,p)$. 
Given $z\in X$ we have $|d(z,w)-d(w,p)|\le d(z,p)$. Consequently the sequence $\left(d(z,w_n)-d(w_n,p)\right)$ is bounded. 
Therefore to prove that it converges to $B_{\gamma}(z,p)$, it suffices to prove that $B_{\gamma}(z,p)$ is the unique  accumulation point of the sequence. 
Let $(w_{n_k})$ be a subsequence such that $\left(d(z,w_{n_{k}})-d(w_{n_{k}},p)\right)$ converges.
 By Definition \ref{visual} we obtain, up to extracting another subsequence, a sequence $(\gamma_k\colon [0,T_k]\to X)$ of  geodesic segments connecting $z$ to $w_{n_k}$, converging uniformly on compact subsets to a geodesic ray $\gamma$ such that $[\gamma]=\xi$.
 Similarly we obtain a sequence   $(\sigma_k\colon [0,S_k]\to X)$ of  geodesic segments connecting $p$ to $w_{n_k}$, converging uniformly on compact subsets to a geodesic ray $\sigma$  such that  $[\sigma]=\xi$.

Since $X$ has approaching geodesics, the rays $\gamma$ and $\sigma$ are strongly asymptotic. Let $T\in \R$  such that 
$$\displaystyle \lim_{t\to+\infty} d(\gamma(t), \sigma(t+T))=0.$$
Fix $t\geq 0$. For $k$ large enough we have
\begin{align*}
 d(z, w_{n_k})&=d (z, \gamma_k(t))+d(\gamma_k(t), w_{n_k}),\\[5pt]
 d(w_{n_k},p)&\leq d(w_{n_k},\gamma_k(t))+ d(\gamma_k(t),p),\\[5pt]
 d(w_{n_k},p)&= d(w_{n_k},\sigma_k(t+T))+d (\sigma_k(t+T),p)\\
&\geq  d(w_{n_k},\gamma_k(t))+d (\gamma_k(t),p)- 2 d(\gamma_k(t),\sigma_k(t+T)).
\end{align*}
It follows that 
\begin{align*}
d(z, w_{n_k})-d(w_{n_k},p)&\geq d (z,\gamma_k(t))- d(\gamma_k(t),p),\\[5pt]
d(z, w_{n_k})-d(w_{n_k},p)&\leq d (z,\gamma_k(t))- d(\gamma_k(t),p)+2 d(\gamma_k(t),\sigma_k(t+T)).
\end{align*}
Taking the limit as $k\to \infty$ we obtain 
$$B_\gamma(z,p)\leq \lim_{k\to\infty}d(z,w_{n_k})-d(w_{n_k},p)\leq  B_\gamma(z,p)+2 d(\gamma(t),\sigma(t+T)).$$
Thus by letting $t\to +\infty$ it follows $ \displaystyle \lim_{k\to\infty}d(z,w_{n_k})-d(w_{n_k},p)=  B_\gamma(z,p)$, as desired.

Assume now that  $(\xi_n)$ is a sequence in $\partial_GX$ converging to $\xi\in\partial_G X$.
We want to prove that $(\Psi(\xi_n))$ converges to $\Psi(\xi).$ Since $ \overline{X}^H$ is compact, it is enough to prove that every convergent subsequence $(\Psi(\xi_{n_k}))$ converges to $\Psi(\xi)$.

Fix a base point $p\in X$. By Definition \ref{visual} we obtain, up to extracting another subsequence,   a sequence   $(\gamma_k)$ of  geodesic rays such that $\gamma_k(0)=p$ and $[\gamma_k]=\xi_{n_k}$ converging uniformly on compact subsets to a geodesic ray $\gamma$ such that $\gamma(0)=p$ and $[\gamma]=\xi.$
To end the proof, we just need to show that  the sequence of functions $(B_{\gamma_{k}}(\,\cdot\,,p))$ converges uniformly on compact subsets to $B_{\gamma}(\,\cdot\,,p)$.

By the first part of the proof, if $(w_n)$ is a sequence of points in $X$ converging to $\xi$, then $(d(\,\cdot\,,w_n)-d(w_n,p))$ converges uniformly on compact subsets to $B_{\gamma}(\,\cdot\,,p).$ 
Let $\widetilde d$ be a metric inducing the topology of $\overline X^G$, and let $K\subseteq X$ be a compact set.
 For all $k\geq 0$ set $w_k:=\gamma_{k}(T_{k})$, where $T_{k}\geq k$ is large enough such that
\begin{equation}
\label{quellocheserve}
\widetilde d(w_k,\xi_{n_k})\le \frac{1}{k},
\end{equation}
and such that
\begin{equation}
\label{pm}
\sup_{x\in K}|d(x,w_k)-d(w_k, p)-B_{\gamma_{k}}(x,p)|\leq \frac{1}{k}.
\end{equation}

By \eqref{quellocheserve} we have that $w_k\to\xi$, hence the sequence $(d(\,\cdot\,,w_k)-d(w_k,p))$ converges to $B_\gamma(\,\cdot\,,p)$ uniformly on $K$. By \eqref{pm} the same is true for $(B_{\gamma_{k}}(z,p))$, concluding the proof.
\end{proof}

\begin{remark}

The approaching geodesics property is not necessary for the horofunction  and  Gromov compactifications to be topologically equivalent.  Indeed such equivalence holds in any proper geodesic Gromov hyperbolic $CAT(0)$ metric space, as  for example the Euclidean strip 
$\R\times [-1,1]$, which does not have approaching geodesics.
\end{remark}

\begin{corollary}
Let $(X,d)$ be a proper geodesic Gromov hyperbolic metric space with  approaching geodesics. Then all points of $\partial_HX$ are Busemann points.
\end{corollary}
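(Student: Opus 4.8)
The plan is to read this off directly from Theorem \ref{abstract}. By that theorem, $\Psi\colon\overline{X}^G\to\overline{X}^H$ is a homeomorphism, hence in particular a bijection. For a point $\xi\in X$ we have by definition $\Psi(\xi)=\overline{d(\,\cdot\,,\xi)}=i_H(\xi)$, so the restriction $\Psi|_X$ is precisely the canonical embedding $i_H\colon X\to\overline{X}^H$, and in particular $\Psi(X)=i_H(X)$. Since $\Psi$ is injective and $\overline{X}^H$ is the disjoint union $i_H(X)\sqcup\partial_HX$ (while $\overline{X}^G=X\sqcup\partial_GX$), it follows that $\Psi$ restricts to a bijection $\partial_GX\to\partial_HX$.

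Next I would invoke the definition of $\Psi$ on the Gromov boundary: every element of $\partial_GX$ has the form $[\gamma]$ for some geodesic ray $\gamma\in\mathscr{R}(X)$, and $\Psi([\gamma])=\overline{B_\gamma}=B(\gamma)$, which is a Busemann point by definition. Combining this with the previous paragraph gives $\partial_HX=\Psi(\partial_GX)=B(\mathscr{R}(X))$, so every point of $\partial_HX$ is a Busemann point, as claimed.

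I do not expect any real obstacle here: the corollary is a formal consequence of the homeomorphism statement in Theorem \ref{abstract}. The only point that deserves a line of justification is the elementary fact that a homeomorphism between two compactifications of $X$ which restricts to the identity on $X$ must carry boundary to boundary — and this, as noted above, is immediate from injectivity of $\Psi$ together with the descriptions of $\partial_GX$ and $\partial_HX$ as the complements of $X$ and $i_H(X)$ in $\overline{X}^G$ and $\overline{X}^H$ respectively.
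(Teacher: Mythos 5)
Your argument is correct and is exactly the intended derivation: the paper states this corollary without proof as an immediate consequence of Theorem \ref{abstract}, namely that the homeomorphism $\Psi$ restricts to the identity on $X$, hence sends $\partial_GX$ bijectively onto $\partial_HX$, and on $\partial_GX$ it is given by $[\gamma]\mapsto\overline{B_\gamma}$, so every horofunction class is a Busemann point. No gaps; your extra remark on boundary-to-boundary is the only point needing justification and you handle it correctly.
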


In the proof of Theorem \ref{abstract} we actually proved the following result, which focuses on a point $\xi\in \partial_G X$.
\begin{corollary}
Let $(X,d)$ be a proper geodesic Gromov hyperbolic metric space and let $\xi=[\gamma]\in \partial_G X$. Assume that every two rays in the class of $\xi$ are strongly asymptotic. Then the map 
$\Psi_\xi\colon X\cup\{\xi\}\to \overline X^H$ defined by $\Psi_\xi(\xi)=[B_\gamma]$ and by
$\Psi_\xi(x)=[d(\,\cdot\,,x)]$ for all $x\in X$ is continuous at $\xi$.
In particular, $\forall z,p \in X$,
\[\lim_{w\to \xi}d\left(z,w\right)-d\left(w,p\right)=B_{\gamma}\left(z,p\right).\]
\end{corollary}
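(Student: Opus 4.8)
The plan is to extract the argument from the proof of Theorem~\ref{abstract} while only using what is available in the hypothesis, namely the strong asymptoticity of rays \emph{within the single class} $\xi$. First I would recall that by \eqref{WW} the inclusion $\overline X^H \geq \overline X^G$ already provides a continuous map the other way, so the issue is genuinely the continuity of $\Psi_\xi$ at the single point $\xi$, which (unlike in Theorem~\ref{abstract}) we cannot deduce from a global statement. To prove continuity at $\xi$ it suffices, by the description of the topology of $\overline X^H$ via $C_p(X)$, to show that whenever $(w_n)$ is a sequence in $X$ with $w_n \to \xi$ in $\overline X^G$, the normalized distance functions $d(\,\cdot\,,w_n) - d(w_n,p)$ converge uniformly on compact subsets to $B_\gamma(\,\cdot\,,p)$; and since $w_n\to\xi$ in $\overline X^G$ automatically rules out $w_n$ staying in a bounded set, this handles the only nontrivial case. (Sequences $w_n\to\xi$ where some subsequence lies in $X$ but converges to a point of $X$ cannot occur, since $\overline X^G$ is Hausdorff and $\xi\notin X$.)

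The core estimate is exactly the one carried out in the proof of Theorem~\ref{abstract}. Fix $z\in X$; the sequence $d(z,w_n)-d(w_n,p)$ is bounded (by $d(z,p)$), so it is enough to show $B_\gamma(z,p)$ is its only accumulation point. Pass to a subsequence $(w_{n_k})$ realizing a given accumulation value. By the characterization of convergence in $\overline X^G$ (Definition~\ref{visual}) together with Ascoli--Arzel\`a, after a further subsequence I get geodesic segments $\gamma_k\colon[0,T_k]\to X$ from $z$ to $w_{n_k}$ converging u.o.c.\ to a ray $\gamma'$ with $[\gamma']=\xi$, and geodesic segments $\sigma_k\colon[0,S_k]\to X$ from $p$ to $w_{n_k}$ converging u.o.c.\ to a ray $\sigma'$ with $[\sigma']=\xi$. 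The crucial point is that $\gamma'$ and $\sigma'$ both lie in the class $\xi$, so the hypothesis of the corollary applies and they are strongly asymptotic: there are $T,S\geq0$ with $d(\gamma'(t+T),\sigma'(t+S))\to0$. Then the triangle-inequality sandwich from the proof of Theorem~\ref{abstract} gives, for each fixed $t\geq0$,
\[
B_{\gamma'}(z,p)\ \leq\ \lim_{k\to\infty}\bigl(d(z,w_{n_k})-d(w_{n_k},p)\bigr)\ \leq\ B_{\gamma'}(z,p)+2\,d(\gamma'(t+T),\sigma'(t+S)),
\]
and letting $t\to+\infty$ pins the limit to $B_{\gamma'}(z,p)$. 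Finally, since $\gamma$ and $\gamma'$ are asymptotic rays in the same class $\xi$, they are strongly asymptotic by hypothesis, so $B_{\gamma'}=B_\gamma$ by Proposition~\ref{nondip}; hence every accumulation point equals $B_\gamma(z,p)$, proving pointwise convergence. Uniformity on compacta then follows from the $1$-Lipschitz bound on $d(\,\cdot\,,w_n)-d(w_n,p)$ via Ascoli--Arzel\`a, exactly as in Theorem~\ref{abstract}.

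The displayed formula $\lim_{w\to\xi}d(z,w)-d(w,p)=B_\gamma(z,p)$ is then just a restatement of the pointwise convergence established above: continuity of $\Psi_\xi$ at $\xi$ says precisely that $d(\,\cdot\,,w)-d(w,p)\to B_\gamma(\,\cdot\,,p)$ as $w\to\xi$, and evaluating at $z$ gives the claim. I expect the only subtle point — and the reason this corollary is not literally a special case of Theorem~\ref{abstract} — is making sure the two limiting rays $\gamma',\sigma'$ produced by the rescaling/Ascoli argument both land in the \emph{same} prescribed class $\xi$, so that the local hypothesis (strong asymptoticity only within $\xi$) is actually applicable; this is guaranteed by Definition~\ref{visual}, since $w_{n_k}\to\xi$ forces every u.o.c.\ limit of geodesics from a fixed basepoint to $w_{n_k}$ to represent $\xi$. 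Everything else is a verbatim repetition of the estimates already performed, so no new calculation is needed.
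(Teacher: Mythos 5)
Your proposal is correct and is essentially the paper's own argument: the authors state this corollary precisely because the first half of the proof of Theorem \ref{abstract} (the sandwich estimate with the two limiting rays $\gamma',\sigma'$ from $z$ and $p$, followed by Proposition \ref{nondip} and Ascoli--Arzel\`a) only uses strong asymptoticity of rays in the single class $\xi$, which is exactly the observation you make explicit. No gap.
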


Up to minor adaptations, the proof and result of Theorem \ref{abstract} remain valid if we replace the approaching geodesics property with the following weaker property.
\begin{definition}
\label{def_weaknotion}
We say that a proper geodesic metric space $(X,d)$ satisfies the \textit{weak approaching geodesics property} relative to a subset $\mathscr{F}\subseteq \mathscr{R}\left(X\right)$ if the following two conditions hold:
\begin{enumerate}
	\item for every $p\in X$,  $\xi\in \partial_GX$, and every  sequence $(x_n)$ in $X$ converging to $\xi$ there exist a subsequence $\left(x_{n_k}\right)$ and a family of geodesics segments $\left(\gamma_k\right)$ such that $\gamma_k$ joins $p$ and $x_{n_k}$, and such that $\left(\gamma_k\right)$ converges uniformly on compact subsets to a geodesic ray in $\mathscr{F}$;
	\item asymptotic geodesic rays in $\mathscr{F}$ are strongly asymptotic.
\end{enumerate}
\end{definition}
\begin{remark} If $(X,d)$ is Gromov hyperbolic, then condition (1) implies that 
 for every $p\in X$ and $\xi\in\partial_GX$ there exists $\gamma\in\mathscr{F}$ such that $\gamma(0)=p$ and $[\gamma]=\xi$.
 \end{remark}
 \begin{theorem}\label{weaknotion}
Let $(X,d)$ be a proper geodesic Gromov hyperbolic metric space satisfying  the weak approaching geodesics property relative to a family $\mathscr{F}$ of geodesic rays. Then the map 
$\Psi\colon  \overline{X}^G\to  \overline{X}^H$ defined by
$$\Psi(\xi)=
\begin{cases}
[d(\,\cdot\,,\xi)], & {\rm if}\ \xi \in X,\\
[B_\gamma],& {\rm if}\ \xi=[\gamma]\in \partial_GX
 \end{cases}
 $$
 is a homeomorphism, and thus $ \overline{X}^G$ and $ \overline{X}^H$ are topologically equivalent.
\end{theorem}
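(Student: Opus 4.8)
The plan is to reuse the proof of Theorem \ref{abstract} essentially verbatim, checking at each step that only the two consequences of the weak approaching geodesics property (relative to $\mathscr F$) are actually invoked, never the full strength of the approaching geodesics assumption. By \eqref{WW} we again only need to show that $\Psi$ is continuous, so that $\overline X^G\ge \overline X^H$, and since both are Hausdorff compactifications this yields topological equivalence. There are two continuity statements to verify: continuity at points $\xi\in\partial_G X$ approached by sequences $(w_n)$ in $X$, and continuity at points $\xi\in\partial_G X$ approached by sequences $(\xi_n)$ in $\partial_G X$. (Continuity at interior points of $X$ is automatic, since on $X$ the map $\Psi$ is just $i_H$, which is an embedding.)

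First I would handle a sequence $(w_n)\subseteq X$ with $w_n\to\xi=[\gamma]\in\partial_G X$. Fix a base point $p$. As in the proof of Theorem \ref{abstract}, the functions $d(\,\cdot\,,w_n)-d(w_n,p)$ are $1$-Lipschitz and pointwise bounded, so by Ascoli--Arzel\`a it suffices to identify the unique accumulation point of $\big(d(z,w_n)-d(w_n,p)\big)$ for each fixed $z$. Passing to a subsequence $(w_{n_k})$ along which this converges, I apply condition (1) of Definition \ref{def_weaknotion} \emph{twice}: once with base point $z$ to extract geodesic segments $\gamma_k$ joining $z$ to $w_{n_k}$ converging to a ray $\gamma\in\mathscr F$ with $[\gamma]=\xi$, and once with base point $p$ to extract geodesic segments $\sigma_k$ joining $p$ to $w_{n_k}$ converging to a ray $\sigma\in\mathscr F$ with $[\sigma]=\xi$. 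Since $\gamma,\sigma\in\mathscr F$ are asymptotic, condition (2) gives $T,S\ge 0$ with $d(\gamma(t+T),\sigma(t+S))\to 0$. The chain of triangle-inequality estimates in the proof of Theorem \ref{abstract} then goes through unchanged, yielding $\lim_k d(z,w_{n_k})-d(w_{n_k},p)=B_\gamma(z,p)$; and by Proposition \ref{nondip} applied to rays in $\mathscr F$ this limit does not depend on the choice of $\gamma\in\mathscr F$ representing $\xi$, so the accumulation point is unique and equals $B_\gamma(z,p)$. Note that it is precisely here that condition (1) earns its keep: without it we would not know that the limiting rays land in $\mathscr F$, where condition (2) is available.

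Next I would treat a sequence $(\xi_n)\subseteq\partial_G X$ with $\xi_n\to\xi\in\partial_G X$. Since $\overline X^H$ is compact it suffices to show every convergent subsequence of $(\Psi(\xi_n))$ has limit $\Psi(\xi)$. Fix $p$; by condition (1) (applied, say, to any sequence of points along a ray representing $\xi_n$ — or directly, since condition (1) for sequences in $X$ converging to $\xi_n$ furnishes rays in $\mathscr F$ based at $p$ representing $\xi_n$, cf.\ the Remark after Definition \ref{def_weaknotion}), pass to a subsequence and obtain rays $\gamma_k\in\mathscr F$ with $\gamma_k(0)=p$, $[\gamma_k]=\xi_{n_k}$, converging uniformly on compacta to some $\gamma\in\mathscr F$ with $\gamma(0)=p$, $[\gamma]=\xi$. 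Then the diagonal argument of Theorem \ref{abstract} applies: choose $w_k:=\gamma_k(T_k)$ with $T_k$ large enough that $\widetilde d(w_k,\xi_{n_k})\le 1/k$ and $\sup_{x\in K}|d(x,w_k)-d(w_k,p)-B_{\gamma_k}(x,p)|\le 1/k$ on a given compact $K$; since $w_k\to\xi$, the first part of the proof gives $d(\,\cdot\,,w_k)-d(w_k,p)\to B_\gamma(\,\cdot\,,p)$ uniformly on $K$, and \eqref{pm} transfers this to $B_{\gamma_k}(\,\cdot\,,p)\to B_\gamma(\,\cdot\,,p)$, i.e.\ $\Psi(\xi_{n_k})\to\Psi(\xi)$.

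The only genuinely new point — and the place to be careful — is ensuring that \emph{all} rays produced by the rescaling/extraction steps actually lie in $\mathscr F$, since condition (2) is only assumed within $\mathscr F$. In the first part this is fine because condition (1) is formulated exactly so that the limit ray lies in $\mathscr F$; in the second part one must observe that the $w_k$ chosen along $\gamma_k\in\mathscr F$ still satisfy the hypotheses that let condition (1) be applied to extract a limiting ray in $\mathscr F$ based at $p$. No other estimate in the proof of Theorem \ref{abstract} uses the approaching geodesics property, so the argument closes. Thus $\Psi$ is continuous, $\overline X^G\ge\overline X^H$, and combined with \eqref{WW} the two compactifications are topologically equivalent.
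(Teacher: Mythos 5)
Your proposal is correct and takes essentially the same route as the paper, which itself only asserts that the proof of Theorem \ref{abstract} remains valid ``up to minor adaptations'': your adaptation is exactly that, with condition (1) of Definition \ref{def_weaknotion} supplying limit rays in $\mathscr{F}$ (applied with both base points $z$ and $p$) and condition (2) supplying the strong asymptoticity needed in the estimates. The only assertion you make that is not needed (and that condition (1) does not literally provide, since $\mathscr{F}$ need not be closed under limits) is that the limit $\gamma$ of the rays $\gamma_k$ in the second part lies in $\mathscr{F}$: the diagonal argument already identifies the limit of $B_{\gamma_k}(\,\cdot\,,p)$, via the first part, as the Busemann function of a ray of $\mathscr{F}$ representing $\xi$, whose class is $\Psi(\xi)$.
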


We end this section by introducing an equivalent formulation of  strong asymptoticity which will be useful in the next sections.
\begin{remark}
Let $(X,d)$ be a metric space. Given a geodesic ray $\gamma\colon \R_{\geq 0}\to X$ then for every point $p\in X$ the infimum $\displaystyle \inf_{s\geq 0}d( p,\gamma(s))$ is attained. Indeed
$$d(p, \gamma(s))\geq d( \gamma(s),  \gamma(0))-d( \gamma(0),p)\underset{s\to +\infty}{\longrightarrow} +\infty.$$ The same holds for a geodesic line. 
\end{remark}

\begin{proposition}
\label{prop:approach}
Let $(X,d)$ be a proper metric space and $\gamma$ and $\sigma$ be two geodesic rays. 
The rays $\gamma$ and $\sigma$ are strongly asymptotic if and only if
\begin{equation}
\label{variationasymp}
\displaystyle \lim_{t\to+\infty}\inf_{s\geq 0}d( \gamma(t),\sigma(s))  = 0.
\end{equation}
\end{proposition}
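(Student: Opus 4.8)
The plan is to prove the two implications separately; the forward one is immediate and the converse carries all the content. For the forward implication, suppose $\gamma$ and $\sigma$ are strongly asymptotic, with $T,S\geq 0$ such that $d(\gamma(t+T),\sigma(t+S))\to 0$. Then for every $u$ large enough that $u-T+S\geq 0$ we have $\inf_{s\geq 0}d(\gamma(u),\sigma(s))\leq d\big(\gamma(u),\sigma(u-T+S)\big)=d\big(\gamma((u-T)+T),\sigma((u-T)+S)\big)$, and the right-hand side tends to $0$ as $u\to +\infty$; hence \eqref{variationasymp} holds, using nothing beyond the triangle inequality.

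For the converse, assume \eqref{variationasymp}. By the Remark immediately preceding the statement, for each $t$ the infimum $\inf_{s\geq 0}d(\gamma(t),\sigma(s))$ is attained; fix a minimizer $s(t)\geq 0$ and set $L(t):=d(\gamma(t),\sigma(s(t)))$, so that $L(t)\to 0$ by assumption (and $L$, being $1$-Lipschitz with a finite limit at $+\infty$, is bounded). The crux is to show that $g(t):=s(t)-t$ converges to a finite limit $S_\infty\in\R$. First I would record that $g$ is bounded: estimating $s(t)=d(\sigma(0),\sigma(s(t)))$ through $\gamma(t)$ and $\gamma(0)$ gives $t-d(\gamma(0),\sigma(0))-L(t)\leq s(t)\leq t+d(\gamma(0),\sigma(0))+L(t)$. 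Next comes the key estimate: since $\gamma$ and $\sigma$ are geodesic rays, for all $t\leq t'$,
\[
|s(t')-s(t)|=d\big(\sigma(s(t')),\sigma(s(t))\big)\leq L(t')+(t'-t)+L(t),
\]
by the triangle inequality through $\gamma(t')$ and $\gamma(t)$.

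I would then rule out oscillation of $g$ by contradiction. If $\liminf_{t\to\infty}g(t)<\limsup_{t\to\infty}g(t)=:D+\liminf g$ with $D>0$, pick $t_2<t_3$, both so large that $L\leq\varepsilon$ on $[t_2,+\infty)$, with $g(t_2)$ within $\varepsilon$ of $\liminf g$ and $g(t_3)$ within $\varepsilon$ of $\limsup g$ (possible since these values are attained up to $\varepsilon$ at arbitrarily large times). Then $s(t_3)-s(t_2)=(t_3-t_2)+(g(t_3)-g(t_2))\geq (t_3-t_2)+D-2\varepsilon$, which for $\varepsilon<D/4$ is strictly larger than $(t_3-t_2)+L(t_2)+L(t_3)$, contradicting the key estimate. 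Hence $g(t)\to S_\infty$ for some $S_\infty\in\R$. Finally, since $d(\gamma(t),\sigma(s(t)))=L(t)\to 0$ and $|s(t)-(t+S_\infty)|=|g(t)-S_\infty|\to 0$, we get $d(\gamma(t),\sigma(t+S_\infty))\to 0$; if $S_\infty\geq 0$ this is strong asymptoticity with $T=0$, $S=S_\infty$, while if $S_\infty<0$ the same computation with $t$ replaced by $t-S_\infty$ gives $d(\gamma(t-S_\infty),\sigma(t))\to 0$, i.e.\ strong asymptoticity with $T=-S_\infty>0$, $S=0$.

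The main obstacle is the convergence of $g$: a priori the closest-point parameter $s(t)$ on $\sigma$ could behave erratically, all the more so since no hyperbolicity is assumed. The one-sided estimate $|s(t')-s(t)|\leq (t'-t)+L(t)+L(t')$ is exactly what forbids $g$ from increasing by a definite amount once $L$ is small, which is what kills the oscillation; properness is used to produce the minimizers $s(t)$, and the hypothesis $L(t)\to 0$ is what makes that estimate bite.
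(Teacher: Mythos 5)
Your proof is correct and follows essentially the same strategy as the paper's: both take the minimizing parameter $s(t)$ (whose existence is the preceding Remark), study the function $s(t)-t$, show it converges, and then translate that into the required shifts $T,S\geq 0$ according to the sign of the limit. The only difference is in the convergence step: the paper builds an interlaced sequence $t_n\le T_n$ (using $s_t\to+\infty$) to get a two-sided Cauchy estimate $|f(t)-f(t_n)|\le 2/n$, whereas you combine boundedness of $g(t)=s(t)-t$ with the one-sided increase bound and a $\liminf$/$\limsup$ oscillation argument, which is an equally valid (and arguably slightly cleaner) route to the same fact.
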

\begin{proof}
If $\gamma$ and $\sigma$ are strongly asymptotic then \eqref{variationasymp} is obvious. Assume on the other hand that $\displaystyle \lim_{t\to+\infty}\inf_{s\ge 0}d( \gamma(t),\sigma(s))  = 0.$
Let $s_t\geq 0$ be such that
$$
d(\gamma(t),\sigma(s_t))=\inf_{s\geq 0}d(\gamma(t),\sigma(s)).
$$
Notice that $\displaystyle \lim_{t\to +\infty}s_t=+\infty.$

We may choose $0\le t_1\le T_1$ so that $d(\gamma(t),\sigma(s_t))\le 1$ for every $ t\ge t_1$ and $s_t\ge s_{t_1}$ for all $t\ge T_1$. 

Once $t_1$ and $T_1$ are given, we may choose $T_1\le t_2\le T_2$ so that $d(\gamma(t),\sigma(s_t))\le 1/2$ for every $t\ge t_2$ and $s_t\ge s_{t_2}$ for all $t\ge T_2$. By iterating this procedure we find a sequence of positive numbers
$$
0\le t_1\le T_1\le \dots\le t_n\le T_n\le \dots,
$$
so that for every $n\ge 0$ we have
\begin{alignat*}{2}
&d(\gamma(t),\sigma(s_t))\le 1/n,\qquad&&\forall t\ge t_n\\
&s_t\ge s_{t_n}, &&\forall t\ge T_n.
\end{alignat*}
Define $f(t):=s_t-t$. Then for $t\ge T_n$ we have
\begin{align*}
|f(t)-f(t_n)|&=|(s_t-s_{t_n})-(t-t_n)|\\
&=|d(\sigma(s_{t_n}),\sigma(s_t))-d(\gamma(t_n),\gamma(t))|\\
&\le d(\gamma(t_n),\sigma(s_{t_n}))+d(\gamma(t),\sigma(s_t))\\
&\le 2/n.
\end{align*}
In particular given $m> n$ we have $t_m\ge T_n$ and therefore $|f(t_m)-f(t_n)|\le 2/n$, which implies that  $c_n:=f(t_n)$ is a Cauchy sequence, and thus  it converges. Denote $c$ its limit.


We claim that for all fixed $n\geq 0$,
\begin{equation}
\label{truce}
\displaystyle \limsup_{t\to+\infty}d(\gamma(t),\sigma(t+ c_n))\le 3/n.
\end{equation}
Once this is proved,  the proposition follows since
$$d(\gamma(t),\sigma (t+ c))\leq d(\gamma(t),\sigma (t+ c_n))+|c_n-c|.$$

Given $t\ge T_n$ we have that 
\begin{align*}
d(\gamma(t),\sigma(t+ c_n))&=d(\gamma(t),\sigma(t+f(t_n)))\\
&\le d(\gamma(t),\sigma(s_t))+d(\sigma(s_t),\sigma(t+f(t_n)))\\
&\le 1/n+|f(t)-f(t_n)|\\
&\le 3/n,
\end{align*}
which implies \eqref{truce}.
\end{proof}

\section{Strongly pseudoconvex domains of \texorpdfstring{$\mathbb C^q$}{Cq}}\label{sec:strong}

\label{squeezing}
Given a bounded domain $D\subseteq \C^q$,   we denote its Kobayashi metric by  $d_D$, its Euclidean closure by $\overline{D}$, and its Euclidean boundary  by $\partial D$.


\begin{definition}[Squeezing function]
\label{DefSq}
Let $D\subseteq \C^q$ be a bounded domain. Given $z\in D$  define  the family of functions
\begin{alignat*}{2}
\mathcal{F}_z&:= \{\varphi : D\longrightarrow \mathbb{B}^q \; | \text{ $\varphi$ is holomorphic, injective and } \varphi(z)=0\}.
\end{alignat*}
The \textit{squeezing function} of $D$ at the point $z \in D$ is defined as  
$$
s_D(z):=\sup\{r>0 :  B(0,r) \subseteq \varphi(D) \text{ for some }\varphi\in\mathcal F_z\}.
$$
\end{definition}

By \cite[Theorem 2.1]{DGZpac} there always exist a map $\varphi\in\mathcal F_z$ realizing the supremum above.
In \cite[Theorem 1.3]{DGZ} F. Deng, Q. Guan, and L. Zhang proved that if $D$ is a bounded strongly pseudoconvex domain with boundary of class $C^2$ then $\displaystyle \lim_{z \to \partial D}s_D(z)= 1$.

\begin{lemma}\label{ladyhawke}
Let $D\subseteq \C^q$ be a bounded domain and let  $(z_n)$ be a sequence in $D$ so that $\lim_{n\to\infty}s_D(z_n)=1$.
Choose $\varphi_n\in\mathcal F_{z_n}$ so that $B(0,r_n)\subseteq D_n\subseteq \B^q$, where $D_n:=\varphi_n(D)$ and $r_n:=s_D(z_n)$.
Then 
 \begin{equation}\label{approx}
\displaystyle \lim_{n\to \infty} d_{D_n}(x,y)= d_{\B^q}(x,y),
\end{equation}
uniformly on compact subsets of $\B^q\times\B^q$.
\end{lemma}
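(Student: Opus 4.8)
The plan is to prove \eqref{approx} by a sandwich argument: estimate $d_{D_n}$ from below and above by the Kobayashi distances of the two ``model'' domains $\B^q$ and $B(0,r_n)$, and check that both estimates converge to $d_{\B^q}$.

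First I would use the inclusions $B(0,r_n)\subseteq D_n\subseteq \B^q$ together with the distance-decreasing property of the Kobayashi pseudodistance (a larger domain carries a smaller Kobayashi distance on the smaller one): for every $x,y\in B(0,r_n)$,
$$d_{\B^q}(x,y)\le d_{D_n}(x,y)\le d_{B(0,r_n)}(x,y).$$
Next I would identify the upper bound: the dilation $z\mapsto z/r_n$ is a biholomorphism of $B(0,r_n)$ onto $\B^q$, hence a Kobayashi isometry, so $d_{B(0,r_n)}(x,y)=d_{\B^q}(x/r_n,y/r_n)$. Combining the two observations gives
$$0\le d_{D_n}(x,y)-d_{\B^q}(x,y)\le d_{\B^q}(x/r_n,y/r_n)-d_{\B^q}(x,y),\qquad x,y\in B(0,r_n).$$

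To upgrade this to uniform convergence, fix a compact set $K\subseteq\B^q\times\B^q$ and choose $\rho\in(0,1)$ with $K\subseteq \overline{B(0,\rho)}\times\overline{B(0,\rho)}$. Since $r_n\to 1$, for all large $n$ we have $r_n>\rho$, so $\overline{B(0,\rho)}\subseteq B(0,r_n)$ and the displayed inequality holds on all of $K$; moreover, once $r_n$ is close enough to $1$, one has $x/r_n\in \overline{B(0,\rho')}$ with $\rho':=(1+\rho)/2<1$. The Kobayashi distance $d_{\B^q}$ is continuous on $\B^q\times\B^q$, hence uniformly continuous on the compact set $\overline{B(0,\rho')}\times\overline{B(0,\rho')}$; together with $\sup_{|x|\le\rho}|x/r_n-x|=\rho(1/r_n-1)\to 0$ this yields
$$\sup_{(x,y)\in K}\big(d_{\B^q}(x/r_n,y/r_n)-d_{\B^q}(x,y)\big)\longrightarrow 0\qquad(n\to\infty),$$
and therefore $\sup_{(x,y)\in K}|d_{D_n}(x,y)-d_{\B^q}(x,y)|\to 0$, which is exactly \eqref{approx}.

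I do not expect a genuine obstacle here; the argument is a soft sandwich. The only points needing a little care are that the monotonicity estimate applies only after $K$ has been absorbed into $B(0,r_n)$, hence only for $n$ large (harmless for a limit statement), and that $d_{\B^q}$ is merely continuous — not globally Lipschitz — near the diagonal and near $\partial\B^q$; this is circumvented by never leaving a fixed compact subset of $\B^q\times\B^q$, where continuity already forces uniform continuity. Note that the hypothesis $s_D(z_n)\to 1$ enters only through $r_n\to 1$, while the existence of maximizers $\varphi_n$, i.e.\ of the inclusion $B(0,r_n)\subseteq D_n$, is the cited \cite[Theorem 2.1]{DGZpac}.
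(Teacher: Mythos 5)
Your proposal is correct and follows essentially the same route as the paper: the sandwich $d_{\B^q}\le d_{D_n}\le d_{B(0,r_n)}$ from monotonicity of the Kobayashi distance, the identification $d_{B(0,r_n)}(x,y)=d_{\B^q}(x/r_n,y/r_n)$ via the dilation, and the convergence $d_{B(0,r_n)}\to d_{\B^q}$ uniformly on compact subsets. Your extra care about absorbing the compact set into $B(0,r_n)$ for large $n$ and invoking uniform continuity simply fills in the step the paper labels ``easy to see''.
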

\begin{proof}
The map $F_n(x)=r_nx$ is an isometry between the Euclidean balls $\B^q$ and $B(0,r_n)$, thus $
d_{B(0,r_n)}(x,y)=d_{\B^q}(r_n^{-1}x,r_n^{-1}y)$. 
 It is easy to see that
$$
\displaystyle \lim_{n\to \infty} d_{B(0,r_n)}(x,y)= d_{\B^q}(x,y),
$$
uniformly on compact subsets of $\B^q\times\B^q$. 
We obtain the result by noticing that $d_{B(0,r_n)}\ge d_{D_n}\ge d_{\B^q}$.
\end{proof}

A bounded domain $D$ satisfying $\displaystyle \lim_{z\to \partial  D} s_D= 1$ clearly  satisfies $\displaystyle \inf_{z\in D}s_D\left(z\right)>0$, that is, it is \textit{uniformly squeezing}.
By \cite[Theorem 1, Theorem 2]{Yeun} it follows that $D$ is Kobayashi complete, and thus proper and geodesic.

\begin{theorem}\label{raysgetstogether}
Let $D\subseteq \C^d$ be a bounded domain satisfying $\displaystyle \lim_{z\to \partial  D} s_D= 1$. 
Then $(D,d_D)$ has approaching geodesics.
\end{theorem}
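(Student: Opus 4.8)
The plan is to argue by contradiction, using the rescaling of Lemma~\ref{ladyhawke} to reduce to the model domain $\B^q$. Suppose $(D,d_D)$ does not have approaching geodesics: then there are asymptotic geodesic rays $\gamma,\sigma\in\mathscr{R}(D)$ which are not strongly asymptotic, so by Proposition~\ref{prop:approach} there exist $\varepsilon>0$ and $t_n\to+\infty$ with $\inf_{s\ge 0}d_D(\gamma(t_n),\sigma(s))\ge\varepsilon$ for every $n$. Set $M:=\sup_{t\ge 0}d_D(\gamma(t),\sigma(t))<+\infty$. The points $z_n:=\gamma(t_n)$ satisfy $d_D(z_n,\gamma(0))=t_n\to+\infty$, hence they eventually leave every compact subset of $D$, so $s_D(z_n)\to 1$. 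Choose $\varphi_n\in\mathcal F_{z_n}$ and $r_n=s_D(z_n)$ as in Lemma~\ref{ladyhawke}, and put $D_n:=\varphi_n(D)$, so that $B(0,r_n)\subseteq D_n\subseteq\B^q$ and $\varphi_n$ is an isometry of $(D,d_D)$ onto $(D_n,d_{D_n})$.

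Next I would produce the limiting geodesic lines. Reparametrize by $\widetilde\gamma_n(t):=\varphi_n(\gamma(t+t_n))$ and $\sigma_n(t):=\varphi_n(\sigma(t+t_n))$, which are geodesics of $D_n$ defined on $[-t_n,+\infty)$, with $\widetilde\gamma_n(0)=0$ and $d_{D_n}(\widetilde\gamma_n(t),\sigma_n(t))=d_D(\gamma(t+t_n),\sigma(t+t_n))\le M$. Since $D_n\subseteq\B^q$ gives $d_{D_n}\ge d_{\B^q}$, on each interval $[-N,N]$ the curves $\widetilde\gamma_n$ and $\sigma_n$ take values in a fixed compact subset of $\B^q$ (a $d_{\B^q}$-ball around $0$) and are equi-Lipschitz there for the Euclidean metric; by Ascoli--Arzel\`a and a diagonal extraction we may assume $\widetilde\gamma_n\to\widetilde\gamma$ and $\sigma_n\to\sigma'$ uniformly on compact subsets of $\R$, with $\widetilde\gamma,\sigma'\colon\R\to\B^q$ continuous. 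Because $t_n\to+\infty$ the domains of $\widetilde\gamma_n,\sigma_n$ exhaust $\R$, and by Lemma~\ref{ladyhawke} $d_{D_n}\to d_{\B^q}$ uniformly on compacts, so $\widetilde\gamma$ and $\sigma'$ are genuine geodesic \emph{lines} of $\B^q$; moreover $\widetilde\gamma(0)=0$ and $d_{\B^q}(\widetilde\gamma(t),\sigma'(t))\le M$ for all $t\in\R$.

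To conclude, observe that two geodesic lines of $\B^q$ which remain at bounded distance share the same pair of endpoints in $\partial\B^q$; since $(\B^q,d_{\B^q})$ is (after rescaling) $CAT(-1)$ and hence admits a unique geodesic line between any two distinct boundary points, $\widetilde\gamma$ and $\sigma'$ have the same image, so $0=\widetilde\gamma(0)=\sigma'(a)$ for some $a\in\R$. Then, since $\varphi_n$ is an isometry, for $n$ large enough that $-t_n\le a$ we get
$$\inf_{s\ge 0}d_D(\gamma(t_n),\sigma(s))=\inf_{u\ge -t_n}d_{D_n}(0,\sigma_n(u))\le d_{D_n}(0,\sigma_n(a)),$$
and the right-hand side tends to $d_{\B^q}(0,\sigma'(a))=0$ as $n\to\infty$, contradicting $\inf_{s\ge 0}d_D(\gamma(t_n),\sigma(s))\ge\varepsilon$. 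Hence $(D,d_D)$ has approaching geodesics.

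I expect the main obstacle to be the middle paragraph: checking that the rescaled rays subconverge to bona fide geodesic lines of the model $\B^q$ — lines defined on all of $\R$ and with image in $\B^q$, not merely rays or curves touching $\partial\B^q$ — which is exactly where completeness of $D$, the inequality $d_{D_n}\ge d_{\B^q}$, and $t_n\to+\infty$ all enter. Once the limits are identified as geodesic lines, the uniqueness of geodesic lines in $\B^q$ forces the contradiction and the proof is complete.
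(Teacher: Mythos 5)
Your proof is correct and follows essentially the same strategy as the paper: rescale along $z_n=\gamma(t_n)$ via the squeezing function, use Lemma~\ref{ladyhawke} to pass to geodesic lines of the model $\B^q$ staying at bounded distance, and invoke uniqueness of the geodesic line of $\B^q$ joining two distinct boundary points. The only (minor) difference is the direction in which the contradiction is extracted: the paper proves Lemma~\ref{distance} to show the two limit lines have distinct images and then contradicts uniqueness, whereas you let uniqueness force the images to coincide and transfer the point $0=\sigma'(a)$ back through the isometries $\varphi_n$ to contradict the bound $\inf_{s\ge 0}d_D(\gamma(t_n),\sigma(s))\ge\varepsilon$, which lets you bypass Lemma~\ref{distance} entirely.
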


We first need  some preparation.
Let $(\gamma_n:[-T_n,T_n]\longrightarrow D)$ be a sequence of geodesic segments, so that $T_n\to+\infty$.
Assume that there exist a sequence of points $(z_n)$ in $ D$ so that $ d_D(z_n,\gamma_n(0))\le C$ for some $C\geq 0$, and so that $\displaystyle \lim_{n\to \infty}s_D\left(z_n\right)=1$.
For every $n\ge0$ we choose $\varphi_n\in\mathcal F_{z_n}$ so that $B(0,r_n)\subseteq D_n\subseteq \B^q$, where $D_n:=\varphi_n(D)$ and $r_n:=s_D(z_n)$.
Denote $\widehat{\gamma}_n:=\varphi_n\circ \gamma_n$.
By Ascoli--Arzel\`a Theorem there exists a subsequence  $(\widehat{\gamma}_{n_k})$ converging uniformly on compact subsets to a geodesic line  $\eta:\mathbb R\rightarrow \B^q$.

\begin{lemma}\label{distance}
We have
$$
\inf_{t\in\mathbb R}d_{\B^q}(0,\eta(t))=\displaystyle \lim_{k\to\infty}\inf_{t\in[-T_{n_k},T_{n_k}]}d_{D_{n_k}}(0,\widehat\gamma_{n_k}(t)).
$$
\end{lemma}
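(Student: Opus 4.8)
The plan is to set $a_k:=\inf_{t\in[-T_{n_k},T_{n_k}]}d_{D_{n_k}}(0,\widehat\gamma_{n_k}(t))$ and $a:=\inf_{t\in\mathbb R}d_{\B^q}(0,\eta(t))$, and to prove separately that $\limsup_k a_k\le a$ and $\liminf_k a_k\ge a$. Both infima are attained: the one defining $a_k$ because $\widehat\gamma_{n_k}$ is continuous on a compact interval, and the one defining $a$ because $\eta$ is a geodesic line, so $d_{\B^q}(0,\eta(t))\to+\infty$ as $|t|\to\infty$ (as in the Remark preceding Proposition \ref{prop:approach}); in particular $a<+\infty$. I will use two facts throughout: since $\varphi_n$ is a biholomorphism it is a $d_D$--$d_{D_n}$ isometry, so $\widehat\gamma_n$ is a geodesic segment of $(D_n,d_{D_n})$ parametrized by arc length and $d_{D_n}(0,\widehat\gamma_n(0))=d_D(z_n,\gamma_n(0))\le C$; and by Lemma \ref{ladyhawke} the distances $d_{D_{n_k}}$ converge to $d_{\B^q}$ uniformly on compact subsets of $\B^q\times\B^q$.

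For the upper bound I would fix $t\in\mathbb R$. Since $T_{n_k}\to+\infty$ we have $t\in[-T_{n_k},T_{n_k}]$ for $k$ large, hence $a_k\le d_{D_{n_k}}(0,\widehat\gamma_{n_k}(t))$. The points $\widehat\gamma_{n_k}(t)$ converge to $\eta(t)$, so they lie, together with $0$ and $\eta(t)$, in a fixed compact subset $K\subseteq\B^q$; combining the uniform convergence of $d_{D_{n_k}}$ to $d_{\B^q}$ on $K\times K$ with the continuity of $d_{\B^q}$ gives $d_{D_{n_k}}(0,\widehat\gamma_{n_k}(t))\to d_{\B^q}(0,\eta(t))$. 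Therefore $\limsup_k a_k\le d_{\B^q}(0,\eta(t))$ for every $t\in\mathbb R$, and taking the infimum over $t$ yields $\limsup_k a_k\le a$.

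For the lower bound I would let $t_k\in[-T_{n_k},T_{n_k}]$ realize $a_k$, so $a_k=d_{D_{n_k}}(0,\widehat\gamma_{n_k}(t_k))$. Since $\widehat\gamma_{n_k}$ is parametrized by arc length and $d_{D_{n_k}}(0,\widehat\gamma_{n_k}(0))\le C$, the triangle inequality gives $|t_k|=d_{D_{n_k}}(\widehat\gamma_{n_k}(0),\widehat\gamma_{n_k}(t_k))\le C+a_k$, and by the upper bound $(a_k)$ is bounded, so $(t_k)$ is bounded. Now suppose $\liminf_k a_k<a$; along a subsequence on which $a_k\to\liminf_k a_k$, pass to a further subsequence with $t_k\to t_\infty\in\mathbb R$. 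Then $\widehat\gamma_{n_k}(t_k)\to\eta(t_\infty)$ by uniform convergence of $\widehat\gamma_{n_k}$ on compacts and continuity of $\eta$, and exactly as in the previous paragraph $a_k=d_{D_{n_k}}(0,\widehat\gamma_{n_k}(t_k))\to d_{\B^q}(0,\eta(t_\infty))\ge a$, a contradiction. Hence $\liminf_k a_k\ge a$, and together with the upper bound this gives $a_k\to a$.

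I expect the interchanges of limits to be routine. The one point that genuinely requires care — and the place where the hypothesis $d_D(z_n,\gamma_n(0))\le C$ is essential — is the boundedness of the minimizers $t_k$, without which $\widehat\gamma_{n_k}(t_k)$ could escape towards $\partial\B^q$ and the lower bound would fail. A secondary subtlety is that, since the domains $D_{n_k}$ vary, one must genuinely invoke the uniform convergence $d_{D_{n_k}}\to d_{\B^q}$ of Lemma \ref{ladyhawke}, not merely continuity of a single distance function.
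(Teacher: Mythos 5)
Your proposal is correct and follows essentially the same route as the paper: the upper bound by fixing $t$ and using the locally uniform convergence $d_{D_{n_k}}\to d_{\B^q}$, and the lower bound by showing the minimizers $t_k$ stay in a compact interval thanks to $d_{D_{n_k}}(0,\widehat\gamma_{n_k}(0))\le C$ (the paper gets $|t_k|\le 2C$ directly from $a_k\le C$, while you use boundedness of $a_k$ via the upper bound — an immaterial difference), then passing to the limit along a convergent subsequence of $(t_k)$.
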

\begin{proof} The sequence $(\widehat \gamma_{n_k})$ converges to $\eta$ uniformly on every compact subset $[-M,M]$.
Therefore the segments $\widehat\gamma_{n_k}([-M,M])$ and $\eta([-M,M])$ remain inside some compact subset of the ball.
Hence 
\begin{equation}\label{Ihatethis}
\displaystyle \lim_{k\to\infty}d_{D_{n_k}}(0,\widehat\gamma_{n_k}(t))=d_{\B^q}(0,\eta(t)),
\end{equation}
  uniformly on compact subsets of $\R$.

As a consequence, we obtain that
$$
\inf_{t\in\mathbb R}d_{\B^q}(0,\eta(t))\ge \displaystyle \limsup_{k\to\infty}\inf_{t\in[-T_{n_k},T_{n_k}]}d_{D_{n_k}}(0,\widehat\gamma_{n_k}(t)).
$$

On the other hand, given  $t_k\in[-T_{n_k},T_{n_k}]$ so that $$\inf_{t\in[-T_{n_k},T_{n_k}]}d_{D_{n_k}}(0,\widehat\gamma_{n_k}(t))=d_{D_{n_k}}(0,\widehat\gamma_{n_k}(t_k))$$ we have that 
$$
d_{D_{n_k}}(0,\widehat\gamma_{n_k}(t_k))\le d_{D_{n_k}}(0,\widehat\gamma_{n_k}(0))\le C,
$$
and thus by triangle inequality
$$
|t_k|=d_{D_{n_k}}(\widehat\gamma_{n_k}(t_k),\widehat\gamma_{n_k}(0))\le 2C.
$$
Since $t_k$ remains in a compact subset of the real line, by \eqref{Ihatethis} we conclude that
\begin{align*}
\inf_{t\in\mathbb R}d_{\B^q}(0,\eta(t))&\le \displaystyle \liminf_{k\to\infty}d_{\B^q}(0,\eta(t_k))\\
&\le \displaystyle \liminf_{k\to\infty}d_{D_{n_k}}(0,\widehat\gamma_{n_k}(t_k))\\
&= \displaystyle \liminf_{k\to\infty}\inf_{t\in[-T_{n_k},T_{n_k}]}d_{D_{n_k}}(0,\widehat\gamma_{n_k}(t)),
\end{align*}
which proves \eqref{distance}.
\end{proof}

\begin{proof}[Proof of Theorem \ref{raysgetstogether}]
Suppose by contradiction that the theorem is false.  Then there exist asymptotic geodesic rays    $\gamma_1$ and $\gamma_2$  which are not strongly asymptotic. Since they are asymptotic, there exists $C>0$ such that
$$d_D(\gamma_1(t),\gamma_2(t))\leq C$$
for all $t\geq0$.
Moreover, by Proposition \ref{prop:approach}, there  exist a positive and divergent sequence $(t_n)$ such that  for all $n\geq 0$,
$$
c\le \inf_{s\geq 0} d_D\left(\gamma_1(t_n),\gamma_{2}(s)\right)\le d_D(\gamma_1(t_n),\gamma_2(t_n))\le C,
$$
for some $0<c\le C$. 
Denote $z_n:=\gamma_1(t_n)$.

Consider the sequences of re-parametrized geodesic rays $(\gamma^{1}_n:[-t_n,+\infty)\longrightarrow D)$ and $(\gamma^{2}_n:[-t_n,+\infty)\longrightarrow D)$ given by 
$$\gamma^{1}_n(t):=\gamma_1(t+t_n),\qquad \gamma^{2}_n(t):=\gamma_2(t+t_n).
$$ 
It follows that 
$$
\gamma^1_n(0)=z_n,\qquad d_D(\gamma^2_n(0),z_n)\le C.
$$

For every $n\ge0$ we choose $\varphi_n\in\mathcal F_{z_n}$ so that $B(0,r_n)\subseteq D_n\subseteq \B^q$, where $D_n=\varphi_n(D)$ and $r_n=s_D(z_n)$.  
Let $\widehat\gamma^i_n:=\varphi_n\circ\gamma^i_n$.  
After taking a subsequence if necessary we may assume that for $i=1,2$ the sequence  $(\widehat\gamma^{i}_n)$ converges uniformly on compact subsets to a geodesics line  $\eta^{i}$ of $\B^q$. 
Furthermore by Lemma \ref{distance} we have $\eta^1(0)=0$ and
$$
\inf_{t\in\mathbb R}d_{\B^q}(0,\eta^2(t))=\displaystyle \lim_{n\to \infty}\inf_{s\ge -t_n}d_{D_n}(0,\widehat\gamma^{2}_n(s))= \displaystyle \lim_{n\to \infty}\inf_{s\ge 0}d_{D}(\gamma_1(t_n),\gamma_2(s))\geq c,$$
which implies that the images of the two geodesic lines are distinct.

We claim that $ \sup_{\R}d_{\B^q}(\eta^1(t),\eta^2(t))\le C.$
Indeed given a positive  number $M>0$, we have $t_n\ge M$ for every $n$ sufficiently large. Therefore $d_{D_n}(\widehat\gamma^1_n(t),\widehat\gamma^2_n(t))\leq C$ for every $t\in [-M,M]$.
By uniform convergence, the curves $\widehat\gamma^i_n:[-M,M]\rightarrow \B^q$ remain in a compact subset of the unit ball, and therefore by \eqref{approx} we obtain that 
$$
\sup_{-M\le t\le M}d_{\B^q}(\eta^1(t),\eta^2(t))\le C.
$$
Hence  there exist $\zeta\neq \xi\in \partial \B^q$ such that for $i=1,2$ we have
$$\displaystyle \lim_{t\to +\infty}\eta^{i}=\zeta,\quad \displaystyle \lim_{t\to -\infty}\eta^{i}=\xi,$$contradicting the fact that in $\B^q$ there exists a unique geodesic line (up to reparametrization) connecting two different boundary points.
\end{proof}

\begin{corollary}\label{ohyeah1.5.1}
Let $D\subseteq \C^q$ be a bounded domain. Assume that $D$ is either
\begin{enumerate}
\item strongly pseudoconvex with $C^2$ boundary, or
\item  convex with $\displaystyle \lim_{z\to \partial D} s_D(z)=1$.
\end{enumerate}
Then the horofunction compactification $ \overline{D}^H$, the Gromov compactification $\overline{D}^G$  and the Euclidean compactification $\overline D$ are topologically equivalent.
\end{corollary}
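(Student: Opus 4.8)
The plan is to reduce both cases to the hypotheses of Theorem \ref{abstract}, combined with the already known topological equivalence $\overline D\cong \overline D^G$, and then to conclude by transitivity of topological equivalence.

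First I would check that in both cases $(D,d_D)$ is a proper geodesic metric space having approaching geodesics. In case (1) the domain is uniformly squeezing by \cite[Theorem 1.3]{DGZ}, which gives $\lim_{z\to\partial D}s_D(z)=1$; in case (2) this limit is part of the hypothesis. As recalled after Lemma \ref{ladyhawke}, a uniformly squeezing bounded domain is Kobayashi complete by \cite{Yeun}, hence $(D,d_D)$ is proper and geodesic. Moreover, since $\lim_{z\to\partial D}s_D(z)=1$ in both cases, Theorem \ref{raysgetstogether} shows that $(D,d_D)$ has approaching geodesics.

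Next I would bring in Gromov hyperbolicity together with the equivalence $\overline D\cong\overline D^G$. In case (1) this is the theorem of Balogh and Bonk \cite{BaBo}: a bounded strongly pseudoconvex domain with $C^2$ boundary is Gromov hyperbolic and the identity map extends to a homeomorphism $\overline D\to\overline D^G$. In case (2) the same two facts, for a bounded convex domain satisfying $\lim_{z\to\partial D}s_D(z)=1$, follow from the results of \cite{BrGaZi,Zim6}. Granting this, $(D,d_D)$ is a proper geodesic Gromov hyperbolic metric space with approaching geodesics, so Theorem \ref{abstract} applies and the Busemann map induces a homeomorphism $\overline D^G\to\overline D^H$ extending the identity on $D$.

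Finally, composing the homeomorphisms $\overline D\to\overline D^G$ and $\overline D^G\to\overline D^H$, each of which extends the identity on $D$, produces a homeomorphism $\overline D\to\overline D^H$ extending the identity, and hence the three compactifications $\overline D$, $\overline D^G$, $\overline D^H$ are pairwise topologically equivalent. I do not expect a substantial obstacle at this stage: the real content has already been established, on the one hand in Theorem \ref{raysgetstogether} (the approaching geodesics property, obtained by rescaling via the squeezing function and using the uniqueness of geodesic lines in $\B^q$), and on the other hand in the cited works producing Gromov hyperbolicity and the euclidean--Gromov equivalence. The only point requiring a little care is to verify that every map entering the argument genuinely extends the identity on $D$, so that the compositions make sense and transitivity of topological equivalence can be applied; this is immediate from Definition \ref{defcomp}, the statement of Theorem \ref{abstract}, and the cited references.
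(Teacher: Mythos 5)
Your proposal is correct and follows essentially the same route as the paper's own proof: establish $\lim_{z\to\partial D}s_D(z)=1$ in both cases (via \cite{DGZ} for case (1)), invoke \cite{BaBo} respectively \cite{BrGaZi,Zim6} for Gromov hyperbolicity and the equivalence $\overline D\cong\overline D^G$, apply Theorem \ref{raysgetstogether} to get approaching geodesics, and conclude with Theorem \ref{abstract}. The extra remarks on properness and geodesicity via \cite{Yeun} and on the maps extending the identity are consistent with what the paper records before Theorem \ref{raysgetstogether}.
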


\begin{proof}
We claim that $\left(D,d\right)$ is Gromov hyperbolic and that $\overline{D}^G$ and $\overline D$ are topologically equivalent. 
Indeed, in case $(1)$ this is proved in   \cite{BaBo} and in case $(2)$ it follows from \cite[Lemma 20.8]{Zim6} combined with \cite[Theorem 1.5]{BrGaZi}. 
In both cases $\displaystyle \lim_{z\to \partial D} s_D(z)=1$ (for $(1)$ this is  \cite[Theorem 1.3]{DGZ}). 
Thus by Theorem \ref{raysgetstogether} the metric space $\left(D,d\right)$ has approaching geodesics. 
By Theorem \ref{abstract}, we conclude that $\overline{D}^G$ and $ \overline{D}^H$ are topologically equivalent. 
\end{proof}
\begin{remark}
In the class of convex domains there is a  gap between strong pseudoconvexity and the condition $\displaystyle \lim_{z\to \partial D} s_D(z)=1$. Indeed, when $D$ is a bounded convex domain with boundary of class $C^{2,\alpha}$ ($\alpha >0$) and satisfies $\displaystyle \lim_{z\to \partial D} s_D(z)=1$, then $D$ is strongly pseudoconvex, see \cite[Theorem 1.5]{Zim4}. However there exists a bounded convex domain which is not strongly pseudoconvex, has boundary of class $C^2$ and satisfies $\displaystyle \lim_{z\to \partial D} s_D(z)=1$, see \cite{FoWo}. 
\end{remark}

\section{Convex domains of finite type of \texorpdfstring{$\C^{q}$}{Cq}}\label{sec:finite}
Given a convex domain $D\subseteq \C^q$,  we denote by $d_D$ its Kobayashi (pseudo)metric. First we recall the definition of  finite line type for convex domains.

\begin{definition}
\label{Def_Type}
Let $D\subseteq \C^q$ be a convex domain. Let $L\geq 2$. We say that $p\in\partial D$ has \textit{finite line type} $L$  if  
\\$\bullet$ $\partial D$ is of class $C^L$ in a neighborhood of $p$, and
\\$\bullet$ $L$ is the maximum of the set
$$S_p:=\left\{\nu(\rho\circ f):f:\C\longrightarrow\C^{q}\ \ \mbox{non trivial affine,}\ \ f(0)=p\right\},$$
where $\rho$ is a defining function of class $C^L$ for $\partial D$ in a neighborhood of $p$ and $\nu(\rho \circ f)$ denotes the order of vanishing of $\rho \circ f$ at $0$.

We say that $D$ has \textit{finite line  type} $L$ if  the line type  of all $x\in \partial D$ is at most $L$ and this bound is realized at some boundary point. 
\end{definition}

\begin{remark}
When  $D\subseteq \C^q$ is a convex domain with $C^\infty$ boundary,  McNeal \cite{mcneal} proved that a point $x\in \partial D$ has finite line type $L$ if and only if it has finite D'Angelo type $L$.
\end{remark}

In this section we prove the following result:
\begin{theorem}\label{mainfinitetype}
Let  $D\subseteq \C^{q}$ be a bounded convex domain of finite line type.
Then Gromov compactification $\overline{D}^G$, the horofunction compactification $\overline{D}^H$, and the Euclidean compactification  $\overline{D}$ are topologically equivalent.
\end{theorem}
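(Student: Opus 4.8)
The plan is to verify the weak approaching geodesics property (Definition \ref{def_weaknotion}) for a bounded convex domain $D$ of finite line type, relative to a suitable family $\mathscr{F}$ of geodesic rays, and then invoke Theorem \ref{weaknotion}. Since we do not control arbitrary geodesic lines of the model domains, the natural choice is to take $\mathscr{F}$ to be the set of geodesic rays contained in \emph{complex geodesics} of $D$ (images of holomorphic isometric embeddings of the disc). Condition (1) of Definition \ref{def_weaknotion} is the statement that for every $p\in D$ and every $\xi\in\partial_GD$, every sequence in $D$ converging to $\xi$ can be joined to $p$ by geodesic segments subconverging to a geodesic ray lying in a complex geodesic; this follows from the existence of complex geodesics through a point and a boundary point in bounded convex domains (Lempert-type results, available in the convex finite-type setting) together with the Gromov hyperbolicity of $D$ proved by Zimmer \cite{Zim1}, which guarantees that such a ray has the prescribed endpoint $\xi$ and that segments to $x_{n_k}$ stay uniformly close to it.

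The heart of the proof is condition (2): two asymptotic geodesic rays $\gamma,\sigma$ lying in complex geodesics of $D$ must be strongly asymptotic. Here I would argue by contradiction exactly as in the strongly pseudoconvex case (proof of Theorem \ref{raysgetstogether}). Assuming $\gamma,\sigma$ are asymptotic but not strongly asymptotic, Proposition \ref{prop:approach} produces a divergent sequence $t_n$ with $c\le \inf_{s\ge 0}d_D(\gamma(t_n),\sigma(s))\le d_D(\gamma(t_n),\sigma(t_n))\le C$. Set $z_n:=\gamma(t_n)$ and rescale $D$ along $z_n$ using the standard convex-finite-type scaling (dilations adapted to the line type at the nearest boundary point, following Zimmer): after passing to a subsequence one obtains a model domain $M=\{(z,w)\in\C^{q-1}\times\C : \operatorname{Re} w > H(z)\}$ with $H$ a convex non-degenerate non-negative weighted-homogeneous polynomial, and the rescaled curves $\widehat\gamma_n,\widehat\sigma_n$ converge to geodesic lines $\eta^1,\eta^2$ of $M$. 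Since complex geodesics are preserved under the scaling (the scaling maps are affine, hence the images remain complex geodesics in the limit), $\eta^1$ and $\eta^2$ both lie in complex geodesics of $M$. The distance estimate, via the analogue of Lemma \ref{distance}, gives $\inf_t d_M(0,\eta^2(t))\ge c>0$ while $\sup_t d_M(\eta^1(t),\eta^2(t))\le 2C<\infty$, so $\eta^1$ and $\eta^2$ are distinct complex geodesics sharing the same two endpoints in $\partial M$. One endpoint is $\infty$ and we may normalize so that $\eta^1$ is the straight half-line $\{z=0,\ \operatorname{Im} w=0\}$, which is the unique complex geodesic through $0$ and $\infty$ in $M$ — this uniqueness of the \emph{complex} geodesic (not of arbitrary geodesic lines) is what makes the weaker family $\mathscr{F}$ sufficient. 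This contradiction establishes (2).

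The main obstacle I anticipate is twofold. First, making the rescaling argument rigorous requires the precise machinery of Zimmer's scaling for convex domains of finite type — in particular that geodesics for $d_D$ rescale to geodesics for $d_M$, that the Kobayashi distances converge locally uniformly, and that the scaled images of complex geodesics converge to complex geodesics of the model; one must also verify that the base points $z_n$ approach the boundary (which follows since $t_n\to\infty$ along a geodesic ray) and track which model domain arises along the subsequence. Second, one must prove the uniqueness of the complex geodesic of $M$ joining $0$ to $\infty$; this should follow from convexity of $M$ and the structure of $H$ (a complex geodesic joining $0$ and $\infty$ projects to a bounded holomorphic map into $\C^{q-1}$ which, by the weighted homogeneity and non-negativity of $H$, must be constant equal to $0$, reducing to the uniqueness of the geodesic in the half-plane $\{\operatorname{Re} w>0\}$). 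Once both ingredients are in place, Theorem \ref{weaknotion} yields the homeomorphism $\overline{D}^G\to\overline{D}^H$, and combining with the known equivalence $\overline{D}^G\simeq\overline{D}$ from \cite{Zim1} completes the proof.
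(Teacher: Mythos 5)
Your overall scaffolding coincides with the paper's: verify the weak approaching geodesics property of Definition \ref{def_weaknotion} relative to the family $\mathscr{F}$ of rays lying in complex geodesics, apply Theorem \ref{weaknotion}, and combine with Zimmer's equivalence $\overline{D}^G\simeq\overline{D}$. Point $(1)$ is fine (the paper gets it from existence of complex geodesics plus Proposition \ref{isoGr}). The genuine gap is in your treatment of point $(2)$: your contradiction rests on the claim that the straight half-line is the \emph{unique} complex geodesic of the model $M=\{\Re w>H(z)\}$ joining $0$ to $\infty$, and on being able to ``normalize so that $\eta^1$ is the straight half-line.'' Neither is justified. Your one-line argument for uniqueness is incorrect as stated: a complex geodesic $\phi=(\phi_1,\phi_2)\colon\H\to M$ with endpoints $0$ and $\infty$ has no a priori reason to have bounded first component, and even boundedness would not force constancy (bounded holomorphic maps on $\H$ are far from constant); moreover $\phi_2$ need not be an automorphism of $\H$, so one cannot simply ``reduce to the half-plane.'' The paper deliberately avoids any such uniqueness statement (it even remarks that uniqueness of geodesic \emph{lines} from $0$ to $\infty$ in $M$ is open); what it proves instead (Lemma \ref{scalingcg}) is that, after normalizing the angular derivative $\phi_2'(0)=1$ via a Julia--Wolff--Carath\'eodory argument (Lemma \ref{lemma:radiallimit}), the rescaled and reparametrized maps $A_n\phi(\lambda_n^{-1}\zeta)$ converge to the specific geodesic $(0,\zeta)$; establishing $\widehat\phi_1\equiv0$ there already requires Schwarz reflection off the totally real set $\{H=0\}\times\R$ together with a second rescaling by automorphisms of $D_H$ and Proposition \ref{isoGr}, and it uses the extra information $\widehat\phi_2=\mathrm{id}$ coming from the derivative normalization --- information an arbitrary complex geodesic from $0$ to $\infty$ does not carry. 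Your ``normalization'' step silently assumes exactly this conclusion.

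Relatedly, your passage to the limit of the complex geodesics containing $\gamma$ and $\sigma$ needs a choice of parametrization: without the derivative normalization the rescaled maps can degenerate, so ``the images remain complex geodesics in the limit'' is not automatic. Once these two points are repaired, note that the uniqueness statement becomes unnecessary: in the paper's argument both rescaled complex geodesics (with $\phi_2'(0)=\psi_2'(0)=1$) converge to the \emph{same} map $(0,\zeta)$, so the marked points $A_n\phi(\lambda_n^{-1})$ and $A_n\psi(\lambda_n^{-1})$ both tend to $(0,1)$ and local uniform convergence of the Kobayashi distances forces $d_D(\phi(e^{-t_n}),\psi(e^{-t_n}))\to0$, directly contradicting the lower bound from Proposition \ref{prop:approach}. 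As written, your proposal leaves the key analytic step (uniqueness in the model, or equivalently the identification of both rescaled limits) unproved, so the argument does not close.
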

Zimmer proved in \cite{Zim1} that $(D,d_D)$  is Gromov hyperbolic, and that the Gromov compactification $\overline{D}^G$ is topologically equivalent to the Euclidean compactification $\overline{D}$. 
Thus Theorem \ref{mainfinitetype} holds once we prove that $\overline{D}^G$ is topologically equivalent to $\overline{D}^H$.
By Theorem \ref{weaknotion},  it  is enough to prove the following result.
\begin{proposition}\label{asyfintyp}
Let  $D\subseteq \C^{q}$ be a bounded convex domain of finite line type.  Let $\mathscr{F}\subset\mathscr{R}(X)$ be the subfamily of geodesic rays which are contained in a complex geodesic. Then $(D,k_D)$ satisfies the weak approaching geodesics property relative to $\mathscr{F}.$  
\end{proposition}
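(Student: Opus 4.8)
The plan is to verify the two conditions of Definition \ref{def_weaknotion} for the family $\mathscr{F}$ of geodesic rays contained in a complex geodesic, using Zimmer's rescaling machinery for convex domains of finite line type together with Lempert-type uniqueness on the model domains. For condition (1), fix $p\in D$, $\xi\in\partial_G D$, and a sequence $(x_n)$ in $D$ converging to $\xi$. Under the euclidean-Gromov equivalence (proved by Zimmer in \cite{Zim1}), convergence to $\xi$ in the Gromov sense means $x_n\to \widehat\xi$ euclidean for some $\widehat\xi\in\partial D$. For each $n$, let $\varphi_n\colon \D\to D$ be a complex geodesic with $\varphi_n(0)=p$ and $\varphi_n(\tau_n)=x_n$ for suitable $\tau_n\in(0,1)$; such complex geodesics through two given points of a bounded convex domain exist by Lempert's theorem (convexity, not strong convexity, suffices for existence). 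Restricting $\varphi_n$ to the real geodesic $[0,\tau_n]$ and reparametrizing by arclength gives geodesic segments $\gamma_n$ joining $p$ to $x_n$, each contained in a complex geodesic, hence in $\mathscr{F}$ (after passing to the ray limit). By Ascoli--Arzel\`a, a subsequence of $(\gamma_n)$ converges uniformly on compact subsets to a geodesic ray $\gamma$, and the complex geodesics $\varphi_{n_k}$ themselves converge (again by normality, using completeness of the Kobayashi metric on bounded convex domains) to a holomorphic map $\varphi\colon\D\to D$ which is a complex geodesic containing the limit ray $\gamma$; thus $\gamma\in\mathscr{F}$ with $\gamma(0)=p$ and $[\gamma]=\xi$. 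This establishes (1).

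For condition (2), suppose $\gamma,\sigma\in\mathscr{F}$ are asymptotic but not strongly asymptotic. Then by Proposition \ref{prop:approach} there is a divergent sequence $(t_n)$ with $c\le \inf_{s\ge 0}d_D(\gamma(t_n),\sigma(s))\le C$ for some $0<c\le C$. Set $z_n:=\gamma(t_n)$; since $z_n$ lies in the complex geodesic carrying $\gamma$, it converges to a point $\widehat\xi\in\partial D$, which by finite line type has a well-defined model domain $M_{\widehat\xi}=\{(z,w)\in\C^{q-1}\times\C : \Re w > H(z)\}$ obtained as a Gromov--Hausdorff / local-biholomorphic limit after the Zimmer rescaling $\Lambda_n$ centered at $z_n$. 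Rescale both $\gamma$ and $\sigma$: the curves $\Lambda_n\circ\gamma(\cdot + t_n)$ and $\Lambda_n\circ\sigma(\cdot + t_n)$, being images of complex geodesics of $D$, converge (after subsequence) to complex geodesics $\eta^1,\eta^2$ of $M_{\widehat\xi}$. One shows, exactly as in the proof of Theorem \ref{raysgetstogether} via the analogue of Lemma \ref{distance}, that $\eta^1$ and $\eta^2$ are geodesic lines of $M_{\widehat\xi}$ which are at bounded distance (from $\sup_\R d_D(\gamma(t+t_n),\sigma(t+t_n))\le 2C$) hence share both endpoints at infinity, but that $\inf_t d_{M_{\widehat\xi}}(\eta^1(0),\eta^2(t))\ge c>0$, so $\eta^1\ne\eta^2$ as sets. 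On the other hand, $\eta^1(0)=0$, and since $\gamma$ is a ray converging to $\widehat\xi$ inside its complex geodesic, a direct computation with the rescaling shows that $\eta^1$ is the vertical line $\{z=0,\Im w=0\}$ and likewise $\eta^2$ is a complex geodesic of $M_{\widehat\xi}$ joining $0$ to $\infty$. The contradiction comes from the uniqueness statement: in a model domain $M_{\widehat\xi}$, the affine line $\{z=0\}$ is the unique complex geodesic passing through $0$ and $\infty$ — equivalently, the unique complex geodesic whose associated geodesic line joins these two boundary points — so $\eta^1=\eta^2$, contradicting $\inf_t d_{M_{\widehat\xi}}(\eta^1(0),\eta^2(t))\ge c>0$.

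I expect the main obstacle to be the uniqueness of the complex geodesic joining $0$ and $\infty$ in the model domain $M_{\widehat\xi}$: the paper explicitly flags that it does not know whether the straight half-line is the unique \emph{real} geodesic line between these points, which is precisely why the argument is restricted to the family $\mathscr{F}$ of rays inside complex geodesics. Establishing the complex-geodesic uniqueness requires understanding the structure of complex geodesics of the weighted-homogeneous model domains — via the convexity and non-degeneracy of $H$, one argues that any complex geodesic $\psi\colon\D\to M_{\widehat\xi}$ with $\psi(0)\to\infty$ and $\psi(0)$ passing through the origin must, by the scaling symmetry $(z,w)\mapsto(t^{a}z,t\,w)$ of the model and an extremality argument, coincide with $\{z=0\}$. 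A secondary technical point is checking that the Zimmer rescaling maps complex geodesics of $D$ to (limits of) complex geodesics of $M_{\widehat\xi}$ and that these limits are nondegenerate, i.e.\ genuine complex geodesics rather than constant maps — this uses the normal-families estimates and the fact that the rescaled base points stay in a fixed compact part of $M_{\widehat\xi}$, together with the lower bound $c$ keeping $\eta^2$ away from $\eta^1$.
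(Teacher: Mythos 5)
Your treatment of condition (1) is essentially the paper's (complex geodesics through $p$ and $x_n$, normal families, Proposition \ref{isoGr}), but your argument for condition (2) has a genuine gap at exactly the point the whole proof turns on. You derive the contradiction from the claim that in the model domain $D_H=\{\Re w>H(z)\}$ the line $\{z=0\}$ is the \emph{unique} complex geodesic joining $0$ to $\infty$, and you offer only a sketch (``scaling symmetry and an extremality argument''). This uniqueness is not established anywhere, and it is precisely the kind of statement the paper goes out of its way to avoid: the authors explicitly say they do not even know whether the straight half-line is the unique real geodesic line from $0$ to $\infty$, and for non-strongly-convex models uniqueness of complex geodesics with prescribed boundary points is delicate (think of how badly it fails in the bidisc). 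A second, related gap: you assert that ``a direct computation with the rescaling shows that $\eta^1$ is the vertical line.'' That is not a direct computation; it is the content of Lemma \ref{scalingcg}, whose proof needs (i) the Julia--Wolff--Carath\'eodory angular derivative $\phi_2'(0)$ (Lemma \ref{lemma:radiallimit}) to normalize the parametrization --- without this normalization the rescaled limit of the second component is only some automorphism-type map, not the identity --- (ii) comparison with the normal quasi-geodesic $t\mapsto(0,\varepsilon e^{-t})$ via Gromov shadowing (Lemma \ref{geoqgeo}) to identify the endpoints $0$ and $\infty$ and to keep the rescaled base points in a compact set, and (iii) a Schwarz-reflection argument off the totally real set $\{H=0\}\times\R$ combined with the scaling automorphisms $(z,w)\mapsto(\Lambda_n z,\mu_n^{-1}w)$ of $D_H$, which forces every Taylor coefficient of the first component to vanish. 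None of these steps appears in your proposal, yet your contradiction needs at least the conclusion of (i)--(iii) for $\gamma$, in addition to the unproved uniqueness statement for $\sigma$.

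The paper's route, by contrast, never invokes uniqueness of geodesics (real or complex) in $D_H$: it normalizes \emph{both} complex geodesics so that $\phi_2'(0)=\psi_2'(0)=1$ and then proves via Lemma \ref{scalingcg} that each one, rescaled by the explicit affine maps \eqref{affine} adapted to the normal direction, converges to the \emph{same} complex geodesic $\zeta\mapsto(0,\zeta)$; hence $A_n\phi(\lambda_n^{-1})$ and $A_n\psi(\lambda_n^{-1})$ both converge to $(0,1)$, and point (3) of Theorem \ref{scaling} contradicts the lower bound $c$ coming from Proposition \ref{prop:approach}. If you want to salvage your version, you would either have to supply a genuine proof of the complex-geodesic uniqueness in $D_H$ (which, as far as one can see, would require essentially the same JWC-normalization, reflection and scaling machinery, so nothing is gained), or adopt the paper's normalization strategy, in which case the uniqueness question disappears altogether.
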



The proof of Proposition \ref{asyfintyp} requires some preliminary results. 
\begin{theorem}[{\cite[Chapter III.H, Theorem 3.9]{BH}}]\label{extension}
	Let $(X,d_X)$ and $(Y,d_Y)$ be two proper  geodesic Gromov hyperbolic metric spaces and  let $i:X\longrightarrow Y$ be an isometric embedding. Then $i$ extends to a topological embedding (also denoted by $i$)  $\overline{X}^G\to\overline{Y}^G$.
\end{theorem}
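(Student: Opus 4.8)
The plan is to construct the extension directly from geodesics, using the description of convergence in $\overline{X}^G$ recalled in Definition~\ref{visual}, and then to upgrade the resulting continuous injection to an embedding by a soft compactness argument. The starting observation is that an isometric embedding is rigid on geodesics: if $\gamma\colon\R_{\geq 0}\to X$ is a geodesic ray then $d_Y\big((i\circ\gamma)(s),(i\circ\gamma)(t)\big)=d_X\big(\gamma(s),\gamma(t)\big)=|s-t|$, so $i\circ\gamma$ is a geodesic ray of $Y$; moreover $d_Y\big((i\circ\gamma)(t),(i\circ\sigma)(t)\big)=d_X\big(\gamma(t),\sigma(t)\big)$, so $\gamma,\sigma$ are asymptotic in $X$ if and only if $i\circ\gamma,i\circ\sigma$ are asymptotic in $Y$. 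Hence $[\gamma]\mapsto[i\circ\gamma]$ is a well-defined injective map $\partial_GX\to\partial_GY$, which together with $i$ on $X$ yields a map $\overline i\colon\overline X^G\to\overline Y^G$. (Equivalently, one checks the identity of Gromov products $(i(x)\vert i(y))_{i(p)}=(x\vert y)_p$, which via Remark~\ref{infinitysequences} confirms that $i$ preserves sequences going to infinity and the relation $\sim_s$.) Injectivity of $\overline i$ is then immediate: $i$ is injective on $X$, the induced boundary map is injective, and interior points map to interior points while boundary points map to boundary points, so no collision occurs.

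The heart of the argument is the continuity of $\overline i$. Since $\overline X^G$ is metrizable it suffices to check sequential continuity. First I would dispose of the interior case: if $(x_n)$ converges to $\xi\in X$ then, because $X$ is open in $\overline X^G$ (it is locally compact, being proper), the $x_n$ lie in $X$ for $n$ large and converge to $\xi$ in $X$, so $i(x_n)\to i(\xi)$ in $Y\subseteq\overline Y^G$ by continuity of $i$. Now let $(x_n)$ converge to $\xi\in\partial_GX$. Fix a base point $p\in X$ and, following Definition~\ref{visual}, attach to each $x_n$ a geodesic $\gamma_n$ (a segment from $p$ to $x_n$ when $x_n\in X$, a ray based at $p$ representing $x_n$ when $x_n\in\partial_GX$); by hypothesis every uniform-on-compacts limit of $(\gamma_n)$ is a ray representing $\xi$. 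The images $i\circ\gamma_n$ are precisely the corresponding geodesics attached to $\overline i(x_n)$ and based at $i(p)$, so by Definition~\ref{visual} it is enough to show that every subsequential uniform-on-compacts limit of $(i\circ\gamma_n)$ represents $\overline i(\xi)$.

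This last point is where care is needed, and I expect it to be the main obstacle: the limit of the images lives in $Y$, which may be strictly larger than $i(X)$, so it cannot be pulled back to $X$ naively. I would resolve it by matching subsequences. Suppose $i\circ\gamma_{n_k}\to\tau$ uniformly on compacts. By Ascoli--Arzel\`a the geodesics $\gamma_{n_k}$ admit a further subsequence converging uniformly on compacts to a geodesic ray $\gamma'$ of $X$, which represents $\xi$ by the standing assumption on $(\gamma_n)$. Along this further subsequence $i\circ\gamma_{n_{k_j}}\to i\circ\gamma'$ uniformly on compacts, since $i$ is $1$-Lipschitz; by uniqueness of uniform limits $\tau=i\circ\gamma'$, whence $[\tau]=[i\circ\gamma']=\overline i(\xi)$. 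Thus $\overline i(x_n)\to\overline i(\xi)$, and $\overline i$ is continuous.

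Finally, $\overline i$ is a continuous injection from the compact space $\overline X^G$ into the Hausdorff (indeed metrizable) space $\overline Y^G$; restricting the codomain to $\overline i\big(\overline X^G\big)$, a continuous bijection from a compact space onto a Hausdorff space is a homeomorphism. Therefore $\overline i\colon\overline X^G\to\overline Y^G$ is a topological embedding extending $i$, as claimed.
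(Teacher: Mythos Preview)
The paper does not prove this statement: Theorem~\ref{extension} is quoted from \cite[Chapter~III.H, Theorem~3.9]{BH} and used as a black box, so there is no ``paper's own proof'' to compare against. Your argument is correct and is essentially the standard one. The key steps---defining $\overline i$ on the boundary via $[\gamma]\mapsto[i\circ\gamma]$, checking sequential continuity by matching subsequences (first extracting a convergent subsequence of $(i\circ\gamma_{n_k})$ in $Y$, then passing to a further subsequence along which $(\gamma_{n_k})$ converges in $X$, and using that $i$ is an isometry to identify the two limits), and finally invoking compact--Hausdorff to upgrade to an embedding---are all sound. The only place where one might ask for an extra word is the claim that $i\circ\gamma_{n_{k_j}}\to i\circ\gamma'$ uniformly on compacts; this is immediate from $d_Y(i(a),i(b))=d_X(a,b)$, but it is worth saying explicitly since it is the step that bridges convergence in $X$ to convergence in $Y$.
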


Let $X,Y$ be two topological spaces, and let $C(X,Y)$ denote the set of continuous functions between $X$ and $Y$, equipped with the compact-open topology. Recall that when $Y$ is a metric space, a sequence converges in the compact-open topology if and only if it converges uniformly on every compact subsets of $X$.

\begin{proposition}\label{isoGr}
Let $(X,d_X)$ and $(Y,d_Y)$ be two proper geodesic Gromov hyperbolic metric spaces and  let $(i_n\colon X\to Y)$ be a sequence of isometric embeddings, converging in $C(X,Y)$ to  the isometric embedding $i$. Then $(i_n)$ converges to $i$ in $C(\overline{X}^G,\overline{Y}^G)$.
\end{proposition}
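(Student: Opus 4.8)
The plan is to reduce the statement to a sequential criterion and then handle boundary points of $\overline X^G$ by following geodesic segments. Since $\overline X^G$ is compact and metrizable and $\overline Y^G$ is metrizable, the topology on $C(\overline X^G,\overline Y^G)$ is that of uniform convergence with respect to a fixed metric $\widetilde d_Y$ inducing the topology of $\overline Y^G$; fix also a metric $\widetilde d_X$ on $\overline X^G$. By Theorem \ref{extension} each $i_n$ and $i$ extends to a topological embedding of $\overline X^G$ into $\overline Y^G$, still denoted $i_n,i$. Since $i$ is continuous, it suffices to prove that for every $\xi\in\overline X^G$ and every sequence $\xi_n\to\xi$ in $\overline X^G$ one has $i_n(\xi_n)\to i(\xi)$. (If this failed, one could find $\varepsilon>0$, a subsequence $(n_k)$ and points $\eta_k\to\eta$ in $\overline X^G$ with $\widetilde d_Y(i_{n_k}(\eta_k),i(\eta_k))\ge\varepsilon$; the sequence taking the value $\eta_k$ at index $n_k$ and $\eta$ at all other indices converges to $\eta$, while its image under $i_n$ does not tend to $i(\eta)$, because $i(\eta_k)\to i(\eta)$ by continuity of $i$ --- a contradiction.) Now fix a base point $p\in X$ and set $q:=i(p)$, so that $i_n(p)\to q$ in $Y$. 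Given $\xi_n\to\xi$, using density of $X$ in $\overline X^G$ and continuity of each $i_n$ on $\overline X^G$, I would choose $x_n\in X$ with $\widetilde d_X(x_n,\xi_n)\le 1/n$ and $\widetilde d_Y(i_n(x_n),i_n(\xi_n))\le 1/n$ (taking $x_n=\xi_n$ when $\xi_n\in X$); then $x_n\to\xi$ and it is enough to show $i_n(x_n)\to i(\xi)$.

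If $\xi\in X$ this is straightforward: since $X$ is open in $\overline X^G$ and carries the metric topology of $d_X$, eventually $x_n\in X$ and $d_X(x_n,\xi)\to 0$, so $d_Y(i_n(x_n),i(\xi))\le d_Y(i_n(x_n),i_n(\xi))+d_Y(i_n(\xi),i(\xi))=d_X(x_n,\xi)+d_Y(i_n(\xi),i(\xi))\to 0$, whence $i_n(x_n)\to i(\xi)$ in $Y$ and hence in $\overline Y^G$.

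The substantial case is $\xi\in\partial_G X$. Since $x_n\to\xi$ we have $L_n:=d_X(p,x_n)\to\infty$; let $\gamma_n\colon[0,L_n]\to X$ be a geodesic segment from $p$ to $x_n$. As $i_n$ is an isometric embedding, $i_n\circ\gamma_n$ is a geodesic segment in $Y$ from $i_n(p)$ to $i_n(x_n)$. The key point is that $i_n\to i$ uniformly on compact subsets: thus, whenever a subsequence $(\gamma_{n_k})$ converges uniformly on compact subsets to a geodesic ray $\gamma^{\ast}$ (such limits exist by Ascoli--Arzel\`a, and $[\gamma^{\ast}]=\xi$ by Definition \ref{visual}), the segments $i_{n_k}\circ\gamma_{n_k}$ converge uniformly on compact subsets to the geodesic ray $i\circ\gamma^{\ast}$ (issued from $q$), and $[i\circ\gamma^{\ast}]=i(\xi)$, since $\gamma^{\ast}(m)\to\xi$ forces $(i\circ\gamma^{\ast})(m)=i(\gamma^{\ast}(m))\to i(\xi)$ by continuity of $i$. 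To invoke the characterization of convergence in $\overline Y^G$, which uses segments based at a fixed point, I would compare $i_n\circ\gamma_n$ with a geodesic segment $\sigma_n$ from $q$ to $i_n(x_n)$ (note $d_Y(q,i_n(x_n))\to\infty$, so the $\sigma_n$ subconverge to geodesic rays by Ascoli--Arzel\`a): these share the endpoint $i_n(x_n)$ and their initial points satisfy $d_Y(q,i_n(p))\to 0$, so thinness of geodesic triangles in $Y$ bounds the Hausdorff distance between the images of $\sigma_n$ and $i_n\circ\gamma_n$ by $\delta_Y+d_Y(q,i_n(p))$, with $\delta_Y$ a hyperbolicity constant of $Y$. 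Passing to a subsequence along which both $\sigma_n$ and $\gamma_n$ converge (to rays $\sigma^{\ast}$ from $q$ and $\gamma^{\ast}$ with $[\gamma^{\ast}]=\xi$), in the limit $\mathrm{im}(i\circ\gamma^{\ast})$ lies in the $\delta_Y$-neighborhood of $\mathrm{im}(\sigma^{\ast})$; since both rays start at $q$, this gives $d_Y(\sigma^{\ast}(t),(i\circ\gamma^{\ast})(t))\le 2\delta_Y$ for all $t$, so $\sigma^{\ast}$ and $i\circ\gamma^{\ast}$ are asymptotic and $[\sigma^{\ast}]=[i\circ\gamma^{\ast}]=i(\xi)$. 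Finally, $d_Y(i_n(x_n),q)\to\infty$, so $(i_n(x_n))$ leaves every compact subset of $Y$ and all its subsequential limits in $\overline Y^G$ lie in $\partial_G Y$; since every uniform limit of the geodesic segments $\sigma_n$ based at $q$ lies in the class $i(\xi)$, Definition \ref{visual} forces $i_n(x_n)\to i(\xi)$, which completes the argument.

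The main obstacle I expect is exactly this last case: the geodesic segments produced from the isometric embeddings $i_n$ emanate from the moving base points $i_n(p)$ rather than from a fixed one, so transferring the uniform-on-compact-subsets convergence $i_n\to i$ into a statement about segments issued from $q=i(p)$ — the form required by Definition \ref{visual} — forces one to use the stability of geodesics under small perturbations of their endpoints in a Gromov hyperbolic space. An alternative treatment of this case avoids geodesics: picking a ray $\gamma$ with $\gamma(0)=p$ and $[\gamma]=\xi$, one uses the identity $(i_n(a)\mid i_n(b))_{i_n(p)}=(a\mid b)_p$ valid for an isometric embedding, together with the four-point inequality and $i_n(p)\to q$, to obtain $\liminf_n\,(i_n(x_n)\mid(i\circ\gamma)(t))_q\ge t-C$ for every $t$ and a constant $C$ depending only on the hyperbolicity constants; by Remark \ref{infinitysequences} this again yields $i_n(x_n)\to i(\xi)$.
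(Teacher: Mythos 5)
Your argument is correct, but your main route differs from the paper's. The paper argues by contradiction and never touches geodesics: after the same reduction you make (extract $z_n\to x$, replace $z_n$ by nearby interior points $x_n$ using continuity of the extended $i_n$, dispose of the case $x\in X$ by locally uniform convergence), it picks a diagonal comparison sequence $i_{n_k}(x_k)\to i(x)$ and uses the invariance of the Gromov product under isometric embeddings together with the basepoint-change inequality, $(i_{n_k}(x_{n_k})\,\vert\, i_{n_k}(x_k))_{i(p)}\ge (x_{n_k}\vert x_k)_p-R$, so that Remark \ref{infinitysequences} forces $i_{n_k}(x_{n_k})$ and $i_{n_k}(x_k)$ to have the same boundary limit — this is essentially the ``alternative treatment'' you sketch in your last paragraph. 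Your main proof instead verifies the geodesic-segment criterion of Definition \ref{visual} directly: you push forward geodesic segments from $p$ to $x_n$ by $i_n$, use Ascoli--Arzel\`a and locally uniform convergence to identify their limit as $i\circ\gamma^{\ast}$ with $[i\circ\gamma^{\ast}]=i(\xi)$ (via continuity of the extension from Theorem \ref{extension}), and then pay for the moving basepoints $i_n(p)$ with a thin-triangle comparison against segments issued from $q$, concluding that every subsequential limit of those segments is a ray asymptotic to $i\circ\gamma^{\ast}$. Both proofs are sound; the Gromov-product computation is shorter and absorbs the moving basepoint in a single inequality, while your geodesic argument is more geometric and makes explicit why stability of geodesics is the issue, at the cost of a couple of routine compactness/limit steps you only indicate (passing the Hausdorff-distance bound between the finite segments to the limiting rays, and the quantifier ``for every convergent subsequence of $(\sigma_n)$'', which your further-extraction of $(\gamma_n)$ does handle since it leaves $\sigma^{\ast}$ unchanged).
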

\begin{proof}
 Since the Gromov compactification of a hyperbolic space is metrizable we can choose 
 metrics  $d_{\overline X}$ and $d_{\overline Y}$   inducing the topology on ${\overline X}^G$ and ${\overline Y}^G$ respectively.
Assume by contradiction that the conclusion of the proposition does not hold, that is $(i_n)$ does not converge $d_{\overline Y}$-uniformly on ${\overline X}^G$ to $i$.
Then there exists $\varepsilon>0$, a sequence $(z_n)$ in ${\overline X}^G$ and a subsequence of $(i_n)$, still denoted $(i_n)$ for simplicity, such that $$d_{\overline Y}(i_n(z_n),i(z_n))>\varepsilon.$$
Up to passing to subsequences we may assume that $z_n$ converges to a point $x\in {\overline X}^G$, and since $i$ is continuous, we have that $i(z_n)$ converges to $i(x)$. Moreover we can assume that 
\begin{equation}\label{terramare} \displaystyle \lim_{n\to\infty}i_n(z_n)= y\neq i(x) .
\end{equation}
Assume first that $x\in X$. Since $i_n$ converges to $i$ $d_Y$-uniformly on compact subsets of $X$, we have that  
$i_n(z_n)\to i(x)$, contradicting \eqref{terramare}.
Therefore we must have $x\in\partial_G X$.

 For all $n\geq 0$, let $x_n$ be a point in $X$ such that 
$$d_{\overline X}(z_n,x_n)\leq \frac{1}{n}\qquad\text{and}\qquad d_{\overline{Y}}(i_n(z_n), i_n(x_n))\leq \frac{1}{n}.$$ Clearly
  $$\displaystyle \lim_{n\to\infty}x_n= x\qquad\text{and}\qquad \displaystyle \lim_{n\to\infty}i_n(x_n)= y.$$  
Fix a point $p\in X$. We have that $d_X(p,x_n)\to +\infty$. As a consequence
$d_Y(i(p),i(x_n))\to+\infty$ and $d_Y(i_n(p),i_n(x_n))\to+\infty$. Since $i_n(p)$ is uniformly bounded, we conclude that  $i(x),y\in\partial_GY$.

Since $(i_n)$ converges to $i$ in $C(X,Y)$, we have that $i_n(x_k)\to i(x_k)$ for all $k\ge 0$. Let $n_k$ be such that $d_{\overline Y}(i_{n_k}(x_k),i(x_k))<\frac{1}{k}$.
Then the sequence $(i_{n_k}(x_k))$ converges to $i(x)$.

Furthermore we may find $R>0$ so that $d_Y(i_n(p),i(p))<R$ for all $n\geq 0$.
We conclude that
$$
(i_{n_k}(x_{n_k})\vert i_{n_k}(x_k))_{i(p)}\ge (i_{n_k}(x_{n_k})\vert i_{n_k}(x_k))_{i_{n_k}(p)}-R= (x_{n_k}\vert x_k)_{p}-R.
$$
Since the term on the right converges to infinity, by Remark \ref{infinitysequences} we conclude that the sequences $(i_{n_k}(x_{n_k}))$ and $(i_{n_k}(x_k))$  converge to the same point in the Gromov boundary, and thus that $i(x)=y$, giving a contradiction.
\end{proof}
\begin{definition}\label{quasig}
Fix $A\geq1$, $B\geq0$. 
A (non-necessarily continuous) map $\sigma\colon \R_{\geq 0}\to X$ is  a $(A,B)$-\textit{quasi-geodesic ray} if for every $t,s\geq0$
$$A^{-1}|t-s|-B\leq d(\sigma(t),\sigma(s))\leq A|t-s|+B.$$
Similarly one can define quasi-geodesics lines $\sigma\colon \R\to X$. 
\end{definition}
\begin{lemma}\label{geoqgeo}
	Let $(X,d)$ be a proper geodesic Gromov hyperbolic space.
	Let $\gamma:\R_{\geq0}\longrightarrow X$ be a geodesic ray and $\sigma:\R_{\geq0}\longrightarrow X$ be a $(1,B)$-quasi-geodesic ray with $\lim_{t\to+\infty}\sigma(t)=[\gamma]\in\partial_GX$.
	Then there exists $M>0$ such that 
	$$\sup_{t\geq0}d(\gamma(t),\sigma(t))<M.$$
\end{lemma}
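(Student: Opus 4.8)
The plan is to prove the estimate by a direct manipulation of Gromov products, using only the four-point form of hyperbolicity and the description of the Gromov boundary recalled in Remark~\ref{infinitysequences}; in particular I would avoid invoking stability of quasi-geodesics. Fix a hyperbolicity constant $\delta\geq 0$ for $(X,d)$ in the form $(x\vert z)_w\geq \min\{(x\vert y)_w,(y\vert z)_w\}-\delta$ for all $x,y,z,w\in X$ (see \cite{BH}; the precise multiple of $\delta$ is irrelevant below). Set $p:=\gamma(0)$, $\xi:=[\gamma]\in\partial_GX$ and $D_0:=d(\gamma(0),\sigma(0))$. I would produce an explicit $M$ depending only on $B$, $\delta$ and $D_0$ — note that the statement allows $M$ to depend on $\gamma$ and $\sigma$, so there is no need to control $D_0$.

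First I would record the consequences of $\sigma$ being a $(1,B)$ quasi-geodesic ray: for $0\leq t\leq n$ one has $t-B\leq d(\sigma(t),\sigma(0))\leq t+B$ and $d(\sigma(t),\sigma(n))\leq (n-t)+B$, which give $(\sigma(t)\vert\sigma(n))_{\sigma(0)}\geq t-\tfrac{3}{2}B$, and hence (shifting the basepoint) $(\sigma(t)\vert\sigma(n))_p\geq t-\tfrac{3}{2}B-D_0$; on the geodesic side $(\gamma(n)\vert\gamma(t))_p=\min\{n,t\}=t$ for $n\geq t$. The key point is that $\lim_{s\to+\infty}\sigma(s)=\xi=[\gamma]$ means, by Remark~\ref{infinitysequences}, that $(\sigma(n))_n$ goes to infinity in the Gromov sense and lies in the same $\sim_s$-class as $(\gamma(n))_n$, i.e. $(\sigma(n)\vert\gamma(n))_p\to+\infty$. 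Applying the iterated four-point inequality along the chain $\sigma(t),\sigma(n),\gamma(n),\gamma(t)$ gives, for $n\geq t$,
$$(\sigma(t)\vert\gamma(t))_p\geq \min\{(\sigma(t)\vert\sigma(n))_p,\ (\sigma(n)\vert\gamma(n))_p,\ (\gamma(n)\vert\gamma(t))_p\}-2\delta;$$
letting $n\to+\infty$ kills the middle term and yields $(\sigma(t)\vert\gamma(t))_p\geq t-\tfrac{3}{2}B-D_0-2\delta$. Plugging this, together with $d(\sigma(t),p)\leq t+B+D_0$ and $d(\gamma(t),p)=t$, into the identity $d(\sigma(t),\gamma(t))=d(\sigma(t),p)+d(\gamma(t),p)-2(\sigma(t)\vert\gamma(t))_p$, I obtain $d(\sigma(t),\gamma(t))\leq 4B+3D_0+4\delta$ for every $t\geq 0$, so $M:=4B+3D_0+4\delta+1$ works.

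The only genuine subtlety is the "key point": one must be precise that convergence $\sigma(s)\to[\gamma]$ in $\overline{X}^G$ is exactly the assertion that $(\sigma(n))_n$ goes to infinity in the Gromov sense and is $\sim_s$-equivalent to $(\gamma(n))_n$ — this is where, and the only place where, the hypothesis that $\sigma$ and $\gamma$ aim at the same boundary point enters, and it is precisely what Remark~\ref{infinitysequences} supplies; everything else is bookkeeping with the triangle inequality. If one prefers a more geometric route, the lemma also follows from stability of quasi-geodesics: $\sigma$ stays within Hausdorff distance $H=H(B,\delta)$ of a geodesic ray $\rho$ issuing from $\sigma(0)$ with $\rho(+\infty)=\xi$; since $\sigma$ is a $(1,B)$ quasi-geodesic the parametrizations synchronise, so $\sup_{t\geq0}d(\sigma(t),\rho(t))\leq 2H+B$; and finally $\rho$ and $\gamma$ are asymptotic geodesic rays, so $\sup_{t\geq0}d(\rho(t),\gamma(t))$ is bounded in terms of $d(\rho(0),\gamma(0))=D_0$ and $\delta$ — then combine with the triangle inequality.
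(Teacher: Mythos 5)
Your main argument is correct, and it takes a genuinely different route from the paper. The paper proves the lemma geometrically: it invokes the Gromov shadowing lemma to place $\sigma$ in an $R$-neighbourhood of $\gamma$, picks for each $t$ a nearest point $\gamma(s_t)$, uses \cite[Proposition 7.6]{AAG} to get $d(\gamma(0),\sigma(t))\geq s_t-6\delta$, and then bounds $|t-s_t|$ by bookkeeping with the $(1,B)$ quasi-geodesic inequalities. You instead work entirely with Gromov products: the bounds $(\sigma(t)\vert\sigma(n))_p\geq t-\tfrac32 B-D_0$ and $(\gamma(t)\vert\gamma(n))_p=t$, the fact (Remark \ref{infinitysequences}) that $\sigma(s)\to[\gamma]$ forces $(\sigma(n)\vert\gamma(n))_p\to+\infty$, and the iterated four-point inequality give $(\sigma(t)\vert\gamma(t))_p\geq t-\tfrac32 B-D_0-2\delta$, hence $d(\sigma(t),\gamma(t))\leq 4B+3D_0+4\delta$; all steps check out, including the basepoint change and the identity $d(x,y)=d(x,p)+d(y,p)-2(x\vert y)_p$. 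What your route buys is self-containedness and an explicit constant: it needs only the four-point condition and the sequential description of $\partial_GX$, avoiding both the shadowing lemma and the external reference to \cite{AAG}. What the paper's route buys is brevity given machinery it uses anyway (the shadowing lemma reappears in Proposition \ref{entersgeodes}), plus the intermediate output that the nearest-point parameter $s_t$ stays within bounded distance of $t$, which is the kind of synchronization statement reused in such arguments. Your closing alternative sketch is essentially the paper's proof (with an auxiliary ray $\rho$ interposed); note that its ``parametrizations synchronise'' step is precisely the $|t-s_t|$ estimate the paper carries out explicitly, so as stated it is a sketch rather than a proof — but your primary Gromov-product argument is complete and does not need it.
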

\proof
By the Gromov shadowing Lemma \cite[Chapter 5]{GdlH}, there exists $R\ge 0$ so that the quasi-geodesic ray $\sigma$ lies in a $R$-neighborhood of $\gamma$. 
More precisely, for every $t\geq0$ there exists $s_t\geq0$ such that
$$d(\gamma(s_t),\sigma(t))=\inf_{s\geq0}d(\gamma(s),\sigma(t))<R.$$
By \cite[Proposition 7.6]{AAG} we obtain
$$d(\gamma(0),\sigma(t))\geq d(\gamma(0),\gamma(s_t))+d(\gamma(s_t),\sigma(t))-6\delta\geq s_t-6\delta. $$
Consequently, if we write $D:=d(\sigma(0),\gamma(0))$
\begin{align*}t-s_t&\geq d(\sigma(0),\sigma(t))-B-d(\gamma(0),\sigma(t))-6\delta\geq-B -d(\sigma(0),\gamma(0))-6\delta=-D-B-6\delta\\[5pt]
t-s_t&\leq d(\sigma(0),\sigma(t))+B-d(\gamma(0),\gamma(s_t))\leq d(\sigma(0),\gamma(0))+d(\sigma(t),\gamma(s_t))+B\leq D+R+B.
\end{align*}
The two inequalities above imply that
$$|t-s_t|\leq D+B+\max\{6\delta,R\},$$
and therefore
$$d(\gamma(t),\sigma(t))\leq d(\gamma(t),\gamma(s_t))+d(\gamma(s_t),\sigma(t))\leq |t-s_t|+R\leq R+D+B+\max\{6\delta,R\}.$$
\endproof
The following theorem describes a scaling process for domains of finite line type.
We refer to the papers of Gaussier and Zimmer \cite{Gau,Zim1} for a more detailed description of scaling techniques.
Recall that a  polynomial $P\colon \C^q\rightarrow\R$ is \textit{non-degenerate} if the set $\{P=0\}$ contains no complex lines.


\begin{theorem}\label{scaling}
Let  $D\subseteq \C^{q}$ be a convex domain  such that $\partial D$ has finite line type near some $\xi\in\partial D$.
Let $\left(x_n\right)$ be a sequence in $D$ converging to $\xi$. Then there exists a divergent subsequence $(n_k)$ and affine maps $A_k\in{\rm Aff}(\C^{q})$ such that
\begin{enumerate}
	\item $\left(A_kD\right)$ converges local Hausdorff to the $\C$-proper convex domain 
	$$D_P:=\{(z,w)\in\C^{q-1}\times\C: \Re w>P(z)\} ,$$
	here $P:\C^q\longrightarrow\R$ is a non-negative non-degenerate convex polynomial with $P(0)=0$,
	\item $(A_k x_{n_k})$ converges to $x_\infty\in D_P$,
	\item $\left(d_{A_kD}\right)$ converges to $d_{D_P}$ uniformly on compact subsets of $D_P\times D_P$.
\end{enumerate}
\end{theorem}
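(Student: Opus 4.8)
The plan is to carry out the standard anisotropic rescaling at a finite line type boundary point, in the spirit of the scaling constructions of Pinchuk, Frankel, Gaussier and Zimmer (\cite{Gau,Zim1}), and then to verify that the limiting object has the stated form and that the Kobayashi distances converge.

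First I would normalize. After an affine change of coordinates, assume $\xi=0$, write $\C^q=\C^{q-1}_z\times\C_w$, and arrange the inner normal to $\partial D$ at $\xi$ to point in the $+\Re w$ direction. For $x_n$ close to $\xi$, set $\epsilon_n:=\mathrm{dist}(x_n,\partial D)\to0$ and let $p_n\in\partial D$ be the nearest boundary point (unique by convexity); then $p_n\to\xi$ and $x_n-p_n$ is a positive multiple of the inner normal $\nu_n$ at $p_n$. Put $T_n(\zeta):=U_n(\zeta-p_n)$, with $U_n$ unitary sending $\nu_n$ to the $+\Re w$ direction, so that $T_nD$ is convex, $0\in\partial(T_nD)$ with inner normal along $+\Re w$, and $T_nx_n=(0,\epsilon_n)$. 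Since $\partial D$ has finite line type near $\xi$, the orders of contact governing this type are uniformly bounded near $\xi$; passing to a subsequence they stabilize, and, following the construction of an $\epsilon_n$-adapted basis, I would introduce an affine change of coordinates together with a weighted dilation $\Lambda_n$ (acting roughly by $w\mapsto w/\epsilon_n$ and $z_j\mapsto z_j/\epsilon_n^{1/m_j}$), and set $A_n:=\Lambda_n\circ T_n$, so that $A_nx_n=(0,1)=:x_\infty$.

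Next I would pass to the limit. The $A_nD$ are convex, and the normalization (with $\{\Re w=0\}$ a supporting hyperplane at $0$ and the boundary controlled from inside and outside thanks to the finite type) keeps them in a position from which, along a subsequence, $A_nD$ converges in the local Hausdorff sense to a convex domain; because the dilations are weighted homogeneous, this limit has the form $D_P=\{\Re w>P(z)\}$ for a weighted homogeneous polynomial $P$ with $P(0)=0$ and $\nabla P(0)=0$. Then $P\ge0$ because $\{\Re w=0\}$ supports $D_P$ at $0$; $P$ is convex since $D_P$ is; $P$ is non-degenerate because a complex line in $\{P=0\}$ would yield a complex line in $\partial D_P$, hence infinite line type of $\partial D$ at $\xi$; and $D_P$ is $\C$-proper, since by weighted homogeneity a polynomial bounded on a complex line becomes, after rescaling by such dilations, identically zero on a limiting complex line through the origin, against non-degeneracy. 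In particular $x_\infty=(0,1)\in D_P$, which gives (1) and (2).

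It remains to prove (3). For the upper bound, any compact subset of $D_P$ lies in $A_nD$ for large $n$, so analytic chains in $D_P$ realizing $d_{D_P}$ up to $\varepsilon$ are admissible in $A_nD$, giving $\limsup d_{A_nD}\le d_{D_P}$ locally uniformly. For the lower bound, I would argue by contradiction: if $d_{A_{n_k}D}(z_k,w_k)\to L<d_{D_P}(z,w)$ with $(z_k,w_k)\to(z,w)$, pick discs $\phi_k\colon\D\to A_{n_k}D$ with $\phi_k(0)=z_k$, $\phi_k(r_k)=w_k$, $d_\D(0,r_k)\to L$; using that $D_P$ is $\C$-proper, hence complete hyperbolic, the relevant Kobayashi balls of the $A_{n_k}D$ remain in a fixed compact set, so by Montel a subsequence of $(\phi_k)$ converges to a non-constant disc $\phi\colon\D\to D_P$ realizing $d_{D_P}(z,w)\le L$, a contradiction; uniformity on compacts then follows since the distance functions are $1$-Lipschitz. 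The main obstacle is the combinatorial core of the normalization step — producing the weighted dilations $\Lambda_n$ adapted to the finite line type so that the rescaled boundaries converge to a non-degenerate polynomial — which is exactly where one leans on the scaling machinery of \cite{Gau,Zim1}; the other delicate point is the metric stability in (3), for which the $\C$-properness (hence tautness) of $D_P$ is essential.
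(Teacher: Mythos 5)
The paper does not actually prove Theorem \ref{scaling}: it is stated as a known result, with the reader referred to the scaling techniques of Gaussier and Zimmer \cite{Gau,Zim1}. Your outline follows exactly that machinery (translate to the nearest boundary point, rotate, apply type-adapted weighted dilations $\Lambda_n$, extract a local Hausdorff limit of the convex domains, then invoke stability of the Kobayashi distance), so you are reconstructing the cited approach rather than diverging from an in-text argument.

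Two cautionary remarks. First, for a general sequence $(x_n)\to\xi$, possibly tangential, the limit polynomial $P$ need not be weighted homogeneous: the translations to the varying base points $p_n$ mix the Taylor coefficients, and homogeneity of the limit is only guaranteed for normal convergence --- which is why the theorem claims only a non-negative non-degenerate convex polynomial, and why the paper invokes weighted homogeneity only later, for the normal scaling, via \cite[Proposition 2.2]{Gau}. Consequently your argument for $\C$-properness, which leans on homogeneity, should be replaced by a convexity argument: if a complex affine line lay in $D_P$ it would have constant $w$-component and $P$ would be bounded, hence (being convex and non-negative) constant, on a complex line $L$; then $P(0)=0$ and convexity force $P\equiv 0$ on the complex line through $0$ parallel to $L$, contradicting non-degeneracy. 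Second, the two steps you flag as the main obstacles --- constructing the dilations $\Lambda_n$ so that the rescaled defining functions converge to a polynomial of degree bounded by the type, and the uniform localization of Kobayashi balls of $A_{n_k}D$ needed for the lower bound in (3) (local Hausdorff convergence of convex domains to a $\C$-proper convex limit implies locally uniform convergence of the distances) --- are precisely the nontrivial content of \cite{Gau,Zim1}. Since the paper itself offers nothing beyond the citation, your proposal is complete to the same extent; those two steps are where a genuinely self-contained proof would have to do the real work.
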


\begin{lemma}
	Let $P:\C^{q-1}\longrightarrow\R$ be a non-negative non-degenerate convex polynomial with $P(0)=0$, then $W:=\{z\in\C^{q-1}:P(z)=0\}$ is a totally real linear subspace of $\C^{q-1}$.
\end{lemma}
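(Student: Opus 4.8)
The plan is to prove the two assertions separately: first that $W$ is a real-linear subspace of $\C^{q-1}$, and then that this subspace is totally real, the latter being an almost immediate consequence of the non-degeneracy hypothesis.

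For linearity, I would start from the observation that, since $P\ge 0$ and $P(0)=0$, the zero set $W$ is exactly the set of global minimizers of $P$, and in particular $0\in W$. Convexity of $P$ immediately yields that $W$ is convex: for $z_1,z_2\in W$ and $t\in[0,1]$ we have $0\le P\big((1-t)z_1+tz_2\big)\le (1-t)P(z_1)+tP(z_2)=0$. The key step is then to show that $W$ is a cone: fix $z_0\in W$ and consider the one real-variable function $g(t):=P(tz_0)$. It is a polynomial in $t$, it is convex as the restriction of a convex function to an affine line, it is non-negative, and $g(0)=g(1)=0$; convexity forces $g\equiv 0$ on $[0,1]$, and since a polynomial vanishing on an interval is identically zero, $g\equiv 0$ on $\R$. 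Hence $\R z_0\subseteq W$, so $W$ is stable under real scalar multiplication. Combining this with convexity: if $z_1,z_2\in W$, then $\tfrac12(z_1+z_2)\in W$, whence $z_1+z_2=2\cdot\tfrac12(z_1+z_2)\in W$. Thus $W$ contains the origin and is closed under addition and under multiplication by real scalars, so it is a real-linear subspace.

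For the totally real part, recall that a real-linear subspace $W\subseteq\C^{q-1}$ is totally real exactly when $W\cap iW=\{0\}$, equivalently when $W$ contains no complex line through the origin. I would argue by contradiction: if $v\in(W\cap iW)\setminus\{0\}$, then $v\in W$ and, writing $v=iw$ with $w\in W$, also $w=-iv\in W$; since $W$ is real-linear it then contains $\{av+b(-iv):a,b\in\R\}=\C v$, a complex line. This contradicts the non-degeneracy of $P$, which by definition forbids $W=\{P=0\}$ from containing a complex line. Therefore $W\cap iW=\{0\}$, and $W$ is a totally real linear subspace.

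The argument is elementary, and I do not anticipate any serious obstacle. The only points where the full strength of the hypotheses is genuinely used are the passage from ``$g$ vanishes on $[0,1]$'' to ``$g\equiv 0$ on $\R$'', which relies on $P$ being a polynomial rather than merely convex, and the final conclusion, which uses non-degeneracy.
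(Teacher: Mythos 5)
Your proof is correct and follows essentially the same route as the paper's: both establish real-linearity of $W$ via the one-variable restriction $t\mapsto P(tz)$ being a non-negative convex polynomial vanishing at $0$ and $1$ (hence identically zero) together with convexity for closure under addition, and both deduce total reality from non-degeneracy by noting that real-linearity would otherwise force $W$ to contain a complex line. The only cosmetic difference is that you close under addition via convexity of $W$ plus doubling, while the paper applies convexity of $P$ directly to $2z$ and $2w$.
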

\begin{proof}
$\newline$
$\bullet$ Clearly $0\in W$.
\\$\bullet$ If $z\in W$ and $t\in \mathbb{R}$, then $tz\in W$. 
Indeed the function $t\in\R\mapsto p_z(t):=P(tz)$ is a non-negative real convex polynomial with $p_z(0)=p_z(1)=0$. 
Therefore $p_z\equiv0$, and $tz\in W$.
\\ $\bullet$ The sum of two element of $W$ is also in $W$. Indeed for $z,w\in W$ we have
$$0\leq P(z+w)=P\left(\frac{1}{2}2z+\frac{1}{2}2w\right)\leq\frac{1}{2}P(2z)+\frac{1}{2}P(2w)=0,$$
proving that $P(z+w)=0$. 
This proves that $W$ is a real linear subspace of $\mathbb C^{q-1}$. 
\\$\bullet$ For every $z\in W\smallsetminus\{0\}$, we have $P(iz)\neq 0$. 
Indeed, if it were not the case, then $P$ would vanish on the complex line $\mathbb Cz$, contradicting its non-degeneracy. 
Hence $W$ is totally real.
\end{proof}

We want to study the limit of complex geodesics after a scaling in the normal direction. 
Let $\xi\in\partial D$. After an affine change of variable, we may assume \cite{Gau} that $\xi=0$ and that $\partial D$ has the following defining function in a neighborhood of the origin
\begin{equation}\label{normalform}
r(z,w)=-\Re w+H(z)+R(z,w).
\end{equation}
Here $H:\C^{q-1}\longrightarrow\R$ is a convex non-degenerate non-negative weighted homogeneous polynomial. This means that there exists $(\delta_1,\dots,\delta_{q-1})\in\N^{q-1}$ with $\delta_j\geq2$ such that for $t\geq0$ and $z\in\C^{q-1}$
$$H\left(t^{1\slash\delta_1}z_1,\dots,t^{1\slash\delta_{q-1}}z_{q-1}\right)=tH(z).$$
The remainder satisfies
$$R(z,w)=o\bigg(|w|+\sum_{j=1}^{q-1}|z_j|^{\delta_j}\bigg).$$

By \cite[Proposition 11.1]{Zim1}, there exists $\varepsilon>0$ such that the normal segment 
\begin{equation}\label{defsigma}
\sigma:t\in\R_{\geq0}\longmapsto (0,\varepsilon e^{-t})\in D
\end{equation}
is a $(1,B)$-quasi-geodesic ray of $(D,d_D)$. Let $t_n\nearrow+\infty$ and write $\lambda_n:=e^{t_n}$. 

The sequence $(\sigma(t_n))$ converges to the origin normally, and in this case the affine maps of Theorem \ref{scaling} have an explicit formula (see \cite{Gau}).
Indeed, up to taking a subsequence of $(t_n)$ if necessary, we may choose invertible matrices $B_n\in$GL$(\C^{q-1})$, so that the affine maps $A_n\in{\rm Aff}(\C^{q})$ given by
\begin{equation}
\label{affine}
A_nx=\begin{bmatrix} B_n & 0\\ 0 & \lambda_n
\end{bmatrix}x,
\end{equation}
satisfy the assumptions of Theorem \ref{scaling} with respect to the sequence $(\sigma(t_n))$,
 and
$$(A_n \sigma(t_n))\to (0,\varepsilon).$$
By \cite[Proposition 2.2]{Gau} the limit domain given by part (1) of Theorem \ref{scaling} is equal in this case to 
$$D_H:=\{ \Re w>H(z)\}.$$ 

\begin{remark}\label{compactificationseq}
	The domain $D_H$ equipped with its Kobayashi metric is Gromov hyperbolic \cite[Theorem 1.6]{Zim5}.
	The Gromov compactification of a Gromov hyperbolic, $\mathbb C$-proper convex domain $D$ is equivalent to the \emph{Euclidean end compactification} $\overline D^*$, see \cite[Theorem 1.5]{BrGaZi}. 
	One can prove that the compactification $\overline{D_H}^*$ is equal to the one-point compactification $\overline{D_H}\cup\{\infty\}$, where $\overline{D_H}$ denotes the Euclidean closure. 
\end{remark}

In the remaining of the section, we always consider complex geodesics defined on the right half-plane $\H:=\{\zeta\in\C:\Re \zeta>0\}$, and not on the unit disk as customary. 
Given a complex geodesic $\phi:\H\rightarrow D$, we write
$$
\phi(\zeta)=(\phi_1(\zeta),\phi_2(\zeta))\in\mathbb C^{q-1}\times\mathbb C.
$$

\begin{lemma}
\label{lemma:radiallimit}
Let $\phi:\H\rightarrow D$ be a complex geodesic satisfying $\phi(0)=0$.
Then the non-tangential limit of $\phi_2'$ at $0$ exists and is a positive real number.
\end{lemma}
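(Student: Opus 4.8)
The plan is to show that the complex geodesic $\phi\colon\H\to D$ with $\phi(0)=0$, written $\phi=(\phi_1,\phi_2)$, has a well-behaved second component near $0$. First I would use the fact that $\phi$ is a complex geodesic, hence an isometry of the Poincaré metric of $\H$ onto $(\phi(\H),d_D)$, so $\phi$ extends continuously (in fact, the extension to the boundary point $0\in\partial\H$ sends $0$ to $\xi=0\in\partial D$, by the normalization $\phi(0)=0$). Since $0\in\partial D$ and $\partial D$ has finite line type near $0$, the domain is convex and, after the affine normalization \eqref{normalform}, has defining function $r(z,w)=-\Re w+H(z)+R(z,w)$; in particular the real hyperplane $\{\Re w=0\}$ is (a translate of) a supporting hyperplane at $0$, and near $0$ we have $\Re w>H(z)+R(z,w)\ge$ (something nonnegative up to lower order). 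Thus $\Re\phi_2(\zeta)>0$ for $\zeta\in\H$ near $0$ and $\Re\phi_2(\zeta)\to 0$ as $\zeta\to 0$ nontangentially.

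The second step is to reduce the existence of the nontangential limit of $\phi_2'$ to a classical Julia–Carathéodory / Julia–Wolff–Carathéodory statement on $\H$. Consider the holomorphic function $g:=\phi_2\colon\H\to\H'$, where $\H'=\{\Re w>0\}$ (shrinking the domain of $\phi$ if needed so that $\Re\phi_2>0$ throughout, which is legitimate since we only care about the germ at $0$). Then $g$ maps $\H$ to the right half-plane and $g(\zeta)\to 0$ as $\zeta\to 0$ nontangentially. The classical Julia lemma in a half-plane guarantees that either $g\equiv0$ (impossible, since $\phi$ is an embedding) or there is a finite boundary dilation coefficient: the nontangential limit
$$
\angle\lim_{\zeta\to 0}\frac{g(\zeta)}{\zeta}=:\alpha
$$
exists, is finite, and by the Julia-type inequality is a \emph{positive real number} (the half-plane version forces $\alpha\in(0,+\infty)$ precisely because $g$ sends $\H$ into $\H$ and fixes the boundary point $0$). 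The further statement $\angle\lim_{\zeta\to0}g'(\zeta)=\alpha$ is exactly the half-plane Julia–Wolff–Carathéodory theorem; combined with $g=\phi_2$ this gives $\angle\lim_{\zeta\to0}\phi_2'(\zeta)=\alpha\in(0,+\infty)$, which is the assertion.

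To make this rigorous I would verify the two inputs carefully. First, that $\phi$ (hence $\phi_2$) has nontangential limit $0$ at the boundary point $0\in\partial\H$: this follows because a complex geodesic realizing the distance is proper, so $\lim_{t\to+\infty}\phi(\text{geodesic ray})\in\partial D$ exists and equals $0$ by the normalization, and a Lindelöf-type argument (using that $\phi$ is bounded, being valued in the bounded domain $D$) upgrades radial to nontangential convergence. Second, that $\Re\phi_2$ stays strictly positive near $0$: this is where convexity and the normal form \eqref{normalform} enter — near $0$, $D\subseteq\{\Re w>H(z)+R(z,w)\}$ and $H\ge0$ with $R$ of higher order, so in a neighborhood of $0$ the domain lies in $\{\Re w>0\}$, forcing $\Re\phi_2(\zeta)>0$ for $\zeta$ in a neighborhood of $0$ in $\H$.

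The main obstacle I anticipate is not the complex-analytic core — the half-plane Julia–Wolff–Carathéodory theorem is completely classical — but rather the passage from the abstract complex-geodesic property (an isometry for the Kobayashi metric) to the concrete boundary regularity needed to apply it: namely establishing that $\phi$ extends to the boundary point $0$ with the right nontangential behavior, and that one may shrink so that $\Re\phi_2>0$ identically. Once those two geometric facts are in place, the conclusion that $\angle\lim_{\zeta\to0}\phi_2'(\zeta)$ exists and is a positive real is immediate from the classical theorem applied to $\phi_2\colon\H\to\{\Re w>0\}$.
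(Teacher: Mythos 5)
Your endgame (apply the classical Julia--Wolff--Carath\'eodory theorem to $\phi_2\colon\H\to\H$) is the same as the paper's, and the preliminary observations ($\pi(D)\subseteq\H$ by convexity and the normal form, so $\Re\phi_2>0$; $\phi_2\to 0$ at $0$) are fine. But there is a genuine gap at the decisive step: you claim that ``the classical Julia lemma in a half-plane guarantees that either $g\equiv 0$ or there is a finite boundary dilation coefficient.'' This dichotomy is false. For a holomorphic self-map of $\H$ with nontangential limit $0$ at the boundary point $0$, the dilation is always strictly positive (that part is automatic, via Schwarz--Pick and the triangle inequality, and is the easy half), but it may well be $+\infty$, in which case JWC gives no finite nontangential limit of the derivative. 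A concrete example is $g(\zeta)=\zeta^{1/2}$, which maps $\H$ into the sector $\{|\arg\zeta|<\pi/4\}\subseteq\H$, tends to $0$ at $0$, yet has $g'(\zeta)=\tfrac12\zeta^{-1/2}\to\infty$; equivalently $\liminf_{\zeta\to 0}\bigl(d_\H(1,\zeta)-d_\H(1,g(\zeta))\bigr)=+\infty$. So nothing in your argument rules out the possibility that $\phi_2$ behaves like this, and indeed your proof never really uses the hypothesis that $\phi$ is a complex geodesic beyond boundary continuity --- which cannot suffice.

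Ruling out infinite dilation is precisely the nontrivial content of the lemma, and it is where the paper uses the geodesic hypothesis quantitatively: the ray $t\mapsto\phi(e^{-t})$ is a geodesic ray of $(D,d_D)$ landing at $0$, the normal segment $\sigma(t)=(0,\varepsilon e^{-t})$ of \eqref{defsigma} is a $(1,B)$ quasi-geodesic ray (Zimmer), and by the shadowing-type Lemma \ref{geoqgeo} (which needs Gromov hyperbolicity of $(D,d_D)$) one gets $\sup_{t\ge 0}d_D(\sigma(t),\phi(e^{-t}))<M$. Projecting to the last coordinate and using non-expansiveness of the Kobayashi metric yields $d_\H(r,\phi_2(r))\le|\log\varepsilon|+M$ for $0<r<1$, which is exactly the missing upper bound on the dilation; only then does JWC apply and give a finite positive nontangential limit for $\phi_2'$. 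To repair your proof you would need to supply an argument of this kind (or some other quantitative comparison exploiting that $\phi$ is an isometric embedding for $d_D$); as written, the conclusion does not follow.
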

\begin{proof}
Let $\pi:\C^q\rightarrow \C$ denote the projection to the last coordinate. 
By convexity of the domain $D$, we have $\pi(D)\subset \H$.

The curve $\phi(e^{-t})$ is a geodesic ray with endpoint the origin.
Recall that the curve $\sigma$ defined in \eqref{defsigma} is a $(1,B)$-quasi-geodesic, therefore by Lemma \ref{geoqgeo} we may find $M>0$ so that 
\begin{equation}\label{eq1}
\sup_{t\ge0}d_D(\sigma(t),\phi(e^{-t}))<M.
\end{equation}

By non-expansiveness of the Kobayashi metric, it follows that
$$
\sup_{t\ge 0}d_{\mathbb H}(\pi\circ\sigma(t),\pi\circ\phi(e^{-t}))=\sup_{t\ge 0}d_{\mathbb H}(\varepsilon e^{-t},\phi_2(e^{-t}))<M.
$$
This shows that for every $0<r<1$ we have
$$
d_{\mathbb H}(r,\phi_2(r))\le d_{\mathbb H}(r, \varepsilon r)+d_{\mathbb H}(\varepsilon r,\phi_2(r))\le |\log\varepsilon|+M.
$$

As a consequence, the dilation of the map $\phi_2$ at the point $0\in\partial \H$ is bounded from above. Indeed we have  
$$
\liminf_{z\to 0}d_{\mathbb H}(1,z)-d_{\mathbb H}(1,\phi_2(z))\le \liminf_{r\to 0}d_{\mathbb H}(r,\phi_2(r))<|\log\varepsilon|+M.
$$
On the other hand, it is simple to show that the left hand side is bounded from below by $-d_{\H}(1,\phi_2(1))$.
We conclude that there exists $c>0$ so that
$$
\liminf_{z\to 0}d_{\mathbb H}(1,z)-d_{\mathbb H}(1,\phi_2(z))=\log c.
$$
By the classical Julia-Wolff-Carath\'eodory Theorem (see \cite[Theorem 1.2.7]{AbBook}), the non-tangential limit of $\phi_2'$ at $0$ exists and is equal to $c$.
Recall here that the chosen normalization of the Poincar\'e metric differs from the one adopted by Abate and other authors by a factor $1/2$.
\end{proof}
For the sake of simplicity we will write $\phi_2'(0)$ for the non-tangential limit of $\phi_2'$ at $0$. Up to pre-composing the complex geodesic $\phi$ with a dilation of $\mathbb H$, we may always assume that $\phi_2'(0)=1$.

\begin{lemma}\label{scalingcg}
Let $\phi:\H\longrightarrow D$ be a complex geodesic satisfying $\phi(0)=0$ and $\phi_2'(0)=1$.
Then $(A_n\phi(\lambda_n^{-1}\zeta))$ converges uniformly on compact subsets to the complex geodesic $\widehat{\phi}:\H\longrightarrow D_H$ given by
\begin{equation}\label{limitgeo}
\widehat{\phi}(\zeta)=\left(0,\zeta\right).
\end{equation}
\end{lemma}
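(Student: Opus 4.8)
The plan is a normal‑families argument: rescale $\phi$, extract a limit which is a complex geodesic of the model domain $D_H$, identify its two coordinates separately, and conclude. Throughout write $\psi_n(\zeta):=A_n\phi(\lambda_n^{-1}\zeta)$, which is a complex geodesic of $A_nD$ (it is $A_n\circ\phi$ precomposed with an automorphism of $\H$).

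\emph{Step 1: a normal family whose limits are complex geodesics of $D_H$.} Applying Lemma~\ref{geoqgeo} to the geodesic ray $t\mapsto\phi(e^{-t})$ and the quasi‑geodesic $\sigma$ of \eqref{defsigma} gives $M>0$ with $d_D(\sigma(t),\phi(e^{-t}))\le M$ for all $t\ge 0$; evaluating at $t=t_n$, applying the isometry $A_n$, and using $A_n\sigma(t_n)\to(0,\varepsilon)\in D_H$ together with the convergence $d_{A_nD}\to d_{D_H}$ of Theorem~\ref{scaling}(3), one sees that $\psi_n(1)$ eventually lies in a fixed compact subset of the complete hyperbolic domain $D_H$. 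Since each $\psi_n$ is $1$‑Lipschitz from $(\H,d_\H)$ to $(A_nD,d_{A_nD})$, the family $(\psi_n)$ is locally uniformly bounded, hence normal; any subsequential limit $\psi\colon\H\to\overline{D_H}$ satisfies $d_{D_H}(\psi(\zeta),\psi(\zeta'))=d_\H(\zeta,\zeta')$ wherever it takes values in $D_H$, and since $\psi(1)\in D_H$ and $D_H$ is convex, the maximum principle forces $\psi(\H)\subset D_H$. Thus every such $\psi$ is a complex geodesic of $D_H$, and it remains to show $\psi=\widehat\phi$.

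\emph{Step 2: the second coordinate.} Here $\psi_{n,2}(\zeta)=\lambda_n\phi_2(\lambda_n^{-1}\zeta)$. By Lemma~\ref{lemma:radiallimit}, $\phi_2\colon\H\to\H$ has $\phi_2(0)=0$ and nontangential derivative $\phi_2'(0)=1$, so Julia's lemma gives $|\phi_2(\zeta)|^2/\Re\phi_2(\zeta)\le|\zeta|^2/\Re\zeta$ for all $\zeta\in\H$; the same estimate holds for every $\psi_{n,2}$ and passes to the limit $\psi_2=\pi\circ\psi$. On the other hand $\psi_{n,2}'(\zeta)=\phi_2'(\lambda_n^{-1}\zeta)$, and for $\zeta$ in a compact subset of $\H$ the points $\lambda_n^{-1}\zeta$ tend to $0$ inside a fixed Stolz angle, so $\psi_{n,2}'\to 1$ locally uniformly and $\psi_2'\equiv 1$. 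Hence $\psi_2(\zeta)=\zeta+c$, and letting $\zeta\to0$ in Julia's inequality (whose right‑hand side vanishes) forces $c=0$. Thus $\psi_2=\mathrm{id}$, $\psi(\zeta)=(\psi_1(\zeta),\zeta)$, and $\Re\zeta>H(\psi_1(\zeta))$ for all $\zeta\in\H$.

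\emph{Step 3: the first coordinate.} This is the heart of the matter. Because $H$ is weighted homogeneous, $\Theta_t(z,w):=\big((t^{1/\delta_1}z_1,\dots,t^{1/\delta_{q-1}}z_{q-1}),\,tw\big)$ is an automorphism of $D_H$ for every $t>0$. Replaying the comparison of Step~1 with $t+t_n$ in place of $t_n$ yields $d_{D_H}\big((0,r),\psi(r)\big)\le M$ for all $0<r\le1$; composing with $\Theta_{1/r}$, which sends $(0,r)$ to $(0,1)$, and using properness of $D_H$, the point $\Theta_{1/r}\psi(r)=\big((r^{-1/\delta_1}\psi_1(r)_1,\dots,r^{-1/\delta_{q-1}}\psi_1(r)_{q-1}),1\big)$ stays in a fixed compact subset of $D_H$, so $\psi_1(r)_j=O(r^{1/\delta_j})\to0$ as $r\to0^+$; together with $H(\psi_1(\zeta))<\Re\zeta$ and the fact that $\{H=0\}$ is the totally real subspace $W$, this shows $\psi$ extends continuously to $0$ with $\psi(0)=(0,0)$, while $\psi(\zeta)\to\infty$ as $\zeta\to\infty$, exactly as for $\widehat\phi$. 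Finally one invokes the rigidity of complex geodesics of $D_H$ joining $(0,0)$ to $\infty$: the function $H\circ\psi_1$ is subharmonic on $\H$, bounded above by $\Re\zeta$, has vanishing radial boundary value at $0$ and controlled growth at $\infty$ inherited from $H(\psi_1(\zeta))<\Re\zeta$, so a Phragm\'en--Lindel\"of / maximum‑principle argument gives $H\circ\psi_1\equiv0$; then $\psi_1(\H)\subset W$, so $\psi_1$ (holomorphic into a totally real subspace) is constant, and $\psi_1(0)=0$ gives $\psi_1\equiv0$, i.e.\ $\psi=\widehat\phi$.

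\emph{The main obstacle} is Step~3, and within it the rigidity of complex geodesics of $D_H$ from $(0,0)$ to $\infty$. Knowing only $\psi_2=\mathrm{id}$ is not enough — every holomorphic section $\zeta\mapsto(\psi_1(\zeta),\zeta)$ of the projection $\pi$ with $H(\psi_1(\zeta))<\Re\zeta$ is automatically a complex geodesic — so one must genuinely use the extra boundary information at $0$ supplied by the comparison with the normal quasi‑geodesic $\sigma$, together with the weighted‑homogeneous structure of $H$, to rule out competitors such as $\psi_1$ a nonzero constant in $W$ or $\psi_1(\zeta)=\varepsilon\sqrt{\zeta}\,v$.
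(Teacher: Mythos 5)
Your Steps 1 and 2 are sound and essentially parallel to the paper (the paper fixes the additive constant in $\psi_2$ via the boundary extension of the limit geodesic rather than via Julia's inequality, a harmless variation). The genuine gap is exactly where you yourself locate "the heart of the matter", in Step 3: the claim that a Phragm\'en--Lindel\"of / maximum-principle argument applied to $u:=H\circ\psi_1$ yields $u\equiv 0$ does not follow from the properties you have established. The only global bound available is $0\le u(\zeta)<\Re\zeta$, which is of exactly the critical (linear) growth for Phragm\'en--Lindel\"of in a half-plane, and the information at the boundary point $0$ that you extract -- vanishing radial limit, indeed $\psi_1(r)_j=O(r^{1/\delta_j})$, hence by weighted homogeneity only $u(r)=O(r)$ along the radius -- cannot distinguish $u\equiv 0$ from behaviour of the type $u\le c\,\Re\zeta$ with $0<c\le 1$. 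Concretely, the subharmonic function $u(\zeta)=\max\{0,\Re(\zeta-1/\zeta)\}$ is nonnegative, dominated by $\Re\zeta$, has $\limsup\le 0$ at every finite boundary point and vanishes identically on the radius $(0,1)$, yet $u\not\equiv 0$; and even via the least harmonic majorant one only gets $u\le c\,\Re\zeta$ for some $c\in[0,1]$ (a nonnegative harmonic function on $\H$ dominated by $\Re\zeta$ is necessarily $c\,\Re\zeta$). So the step "$H\circ\psi_1\equiv 0$" is unsupported, and since everything after it ($\psi_1$ maps into the totally real $W$, hence is constant, hence $\equiv 0$) hinges on it, the proof is incomplete at its decisive point.

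This is precisely where the paper's proof does something structurally different. From $H(\widehat\phi_1(\zeta))\le\Re\zeta$ on $\overline\H$ it deduces that the boundary values $\widehat\phi(\partial\H)$ lie in the totally real set $\{H=0\}\times\R$ (after a linear change of coordinates), applies the Schwarz reflection principle to extend $\widehat\phi$ holomorphically to all of $\C$, and then performs a second rescaling by the weighted dilation automorphisms $\Psi_n(z,w)=(\Lambda_n z,\mu_n^{-1}w)$ of $D_H$: Proposition \ref{isoGr} upgrades local uniform convergence of the rescaled complex geodesics to convergence on $\overline\H$, reflection propagates it to all of $\C$, and comparison of Taylor coefficients (the factors $\mu_n^k\Lambda_n$ diverge for every fixed $k\ge1$ because $\delta_j\ge2$) forces every coefficient of $\widehat\phi_1$ to vanish. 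Some argument of this strength -- or an honest uniqueness theorem for geodesic lines of $D_H$ joining $0$ to $\infty$, which the paper explicitly states it does not know -- is what you would need to replace the Phragm\'en--Lindel\"of claim.
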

\proof
 As shown in the proof of the previous lemma we may find $M>0$ so that 
$$
\sup_{t\ge0}d_D(\sigma(t),\phi(e^{-t}))<M.
$$

Since $$d_{A_nD}(A_n\varphi(\lambda_n^{-1}),A_n\sigma(t_n))<M,$$
 by points (2) and (3) of Theorem \ref{scaling} we have that  the sequence $(A_n\phi(\lambda_n^{-1}))$ is relatively compact in $D_H$.
Write $$\phi_n(\zeta):=A_n\phi(\lambda_n^{-1}\zeta).$$
By Ascoli-Arzel\'a Theorem, every subsequence of $(\phi_n)$ admits a subsequence converging uniformly on compact subsets. 
The proposition follows once we prove that \eqref{limitgeo} is the only possible limit of a subsequence of $(\phi_n)$.

Consider a convergent subsequence, which we still denote by $(\phi_n)$ for simplicity. 
Using the fact that $d_{A_nD}\rightarrow d_{D_H}$ uniformly on compact subsets, we conclude that the limit of the sequence $(\phi_n)$ is a complex geodesic $\widehat{\phi}\colon\H\longrightarrow D_H$.

By Lemma \ref{extension} and Remark \ref{compactificationseq}, the complex geodesic $\widehat{\phi}$ extends to a  topological embedding from $\overline{\mathbb H}\cup \{\infty\}$ to $\overline{D_H}\cup \{\infty\}$, still denoted as $\widehat\phi$. 
Notice that  by \eqref{affine} we have that $A_n\sigma(t_n+t)=(0,e^{-t})$. 
By \eqref{eq1}, we obtain that 
$$
d_{D_H}((0,e^{-t}),\widehat\phi(e^{-t}))=\lim_{n\to\infty} d_{D_n}(A_n\sigma(t+t_n),A_n\phi(e^{-t-t_n}))\le M\qquad\forall t\in\mathbb R,
$$ proving that the geodesic lines $(0,e^{-t})$ and $\widehat\phi(e^{-t})$ are asymptotic  both for $t\geq 0$ and for $t\leq 0$. 
This implies that $\widehat\phi(0)=0$ and $\widehat\phi(\infty)=\infty$.
In particular the map $\widehat\phi$ is a topological embedding from the Euclidean closure $\overline{\mathbb H}$ to the Euclidean closure $\overline{D_H}$.

Since the non-tangential limit $\phi_2'(0)=1$, for every $\zeta\in\H$ we have
$$\frac{d}{d\zeta}(\phi_n)_2(\zeta)=\frac{d}{d\zeta}\lambda_n\phi_2(\lambda_n^{-1}\zeta)=\phi_2'(\lambda_n^{-1}\zeta)\to 1.$$
Consequently $(\widehat{\phi}_2)'(\zeta)=1$ for all $\zeta\in \H$, and therefore $\widehat\phi_2={\rm id}$.
We conclude that the complex geodesic $\widehat{\phi}\colon\H\rightarrow D_H$ has the form
$$\widehat{\phi}(\zeta)=(\widehat{\phi}_1(\zeta),\zeta).$$


We want to prove that $\widehat{\phi}_1\equiv0$. After a $\C$-linear change of coordinates, we may assume that the zero set of $H$ is contained in $\R^{q-1}$. Note that $H(\widehat{\phi}_1(\zeta))\leq\Re \zeta $ for every $\zeta\in\overline{\H}$, therefore $\widehat{\phi}(\partial\H)\subseteq\{H=0\}\times\R\subseteq\R^{q}$. By the Schwarz reflection principle, the map $\widehat{\phi}$ extends to a holomorphic map on $\mathbb C$, defined by the functional equation $\widehat\phi(-x+iy)=\overline{\widehat\phi(x+iy)}$.

Given $\mu_n\nearrow+\infty$,  consider the automorphisms  of $D_H$ defined by
$$\Psi_n(z,w):=(\Lambda_n z,\mu_n^{-1}w),$$ where $\Lambda_n:={\rm diag}\left(\mu_n^{-\frac{1}{\delta_1}},\dots,\mu_n^{-\frac{1}{\delta_{q-1}}}\right)$. 
Consider the sequence of holomorphic maps $ \widehat\psi_n\colon \C\to \C^q$ defined by
$$\widehat\psi_n(\zeta):=\Psi_n(\widehat{\phi}(\mu_n\zeta))=(\Lambda_n\widehat{\phi}_1(\mu_n\zeta),\zeta) .$$
When restricted to $\H$, every  map $\widehat\psi_n$ is a  complex geodesics of $D_H$. Thus,
up to extracting a subsequence,  the sequence $(\widehat\psi_n|_{\H})$
converges uniformly on compact subsets of $\H$ to a complex geodesic of $D_H$. Here we use again the fact that the geodesic lines $(0,e^{-t})$ and $\widehat\phi(e^{-t})$ are asymptotic for $t\leq 0$, and therefore that $(\widehat\psi_n(1))$ remains at finite $d_{D_H}$-distance  from the point $(0,1)$. By Proposition \ref{isoGr}, the sequence $(\widehat\psi_n)$ converges uniformly on compact subsets of $\overline\H$.  By the Schwarz reflection principle  the sequence $(\widehat\psi_n)$ converges uniformly on compact subsets of the closed left half-plane $-\overline{\H}$, and thus it  converges uniformly on compact subsets of all $\C$.

Writing the Taylor expansion of $\widehat{\phi}_1$ at $0$ 
$$\widehat{\phi}_1(\zeta)=\sum_{k\geq1}a_k\zeta^k, \ \zeta\in\C ,$$ with $a_k\in\C^{q-1}$, we obtain, for all $n\geq 0$,
$$\Lambda_{n}\widehat{\phi}_1(\mu_n\zeta)=\sum_{k\geq1}\mu_n^k\Lambda_na_k\zeta^k.$$
Since the sequence $(\mu_n^k\Lambda_{n})_{n\geq 0}$ diverges for every fixed $k$,  the sequence of holomorphic maps $(\Lambda_{n}\widehat{\phi}_1(\mu_n\zeta))$ converges uniformly on compacts subsets of  $\C$ to some holomorphic map if and only if $a_k=0$ for every $k\geq 1$ i.e. $\widehat{\phi}_1\equiv0$.
\endproof
We are ready to prove Proposition \ref{asyfintyp}

\begin{proof}[Proof of Proposition \ref{asyfintyp}]
The family $\mathscr{F}$ satisfies point $(1)$ of Definition \ref{def_weaknotion} by Proposition \ref{isoGr}. Thus it is enough to prove point $(2)$ of Definition \ref{def_weaknotion}.
Recall that $\left(D,k_D\right)$ is Gromov hyperbolic and $\overline{D}$ and $\overline{D}^G$ are topologically equivalent.
We need to prove that, if $\phi,\psi:\H\rightarrow D$ are two complex geodesics with $\phi(0)=\psi(0)=\xi\in \partial D$, then the two geodesic rays $\phi(e^{-t})$ and $\psi(e^{-t})$ are strongly asymptotic.
After a change of coordinates, we may assume that $\xi=0$, and that the defining function of $\partial D$ near $0$ is as in \eqref{normalform}.

By Lemma \ref{lemma:radiallimit} we have $\phi_2'(0)=c_1>0$ and $\psi'_2(0)=c_2>0$.
Without loss of generality, we may replace the two complex geodesics with  $\psi(\frac{\zeta}{c_1})$ and $\phi(\frac{\zeta}{c_2})$.
Therefore we may assume that $\phi_2'(0)=\psi_2'(0)=1$.

By contradiction, suppose that there exists $c>0$ and $t_n\nearrow +\infty$ such that
\begin{equation}\label{fintypcontr}
\inf_{s\in[0,1)}d_D(\phi(e^{-t_n}),\psi(e^{-s}))\geq c.
\end{equation}
Write $\lambda_n:=e^{t_n}$ and let $(A_n)$ be a sequence in ${\rm Aff}(\mathbb C^{q})$ as in \eqref{affine}.
By Lemma \ref{scalingcg} both sequences $(A_{n}\phi(\lambda_{n}^{-1}\zeta))$ and $(A_{n}\psi(\lambda_{n}^{-1}\zeta))$  converge uniformly on compact subsets to the complex geodesic $\widehat\phi(\zeta)=(0,\zeta)$ of the domain $D_H$.

Let $D_n=A_n D$. Then by \eqref{fintypcontr} we obtain that
$$
d_{D_{n}}(A_{n}\phi(\lambda_{n}^{-1})\,,\,A_{n}\psi(\lambda_{n}^{-1}))=d_D(\phi(e^{-t_{n}})\,,\,\psi(e^{-t_{n}}))\ge c.
$$
On the other hand both sequences $(A_{n}\phi(\lambda_{n}^{-1}))$ and $(A_{n}\psi(\lambda_{n}^{-1}))$ converge to the point $(0,1)$. By point (3) of Theorem \ref{scaling} we conclude that the left term in the inequality above converge to $0$, giving a contradiction.
\end{proof}

\section{Dynamics of non-expanding self-maps}

The Denjoy--Wolff theorem is a classical result describing the dynamics of holomorphic self-maps of the disc $\D\subseteq \C$. 
It has been generalized to the ball $\B^q\subseteq \C^q$ by Herv\'e \cite{He} and to bounded strongly convex domains with $C^2$ boundary by Abate \cite{Ab1988}. 
More recently, Karlsson \cite{Kar} proved the following version of the Denjoy--Wolff theorem for a non-expanding self-map of a Gromov hyperbolic metric space. Recall that a self-map $f\colon X\to X$ is \textit{non-expanding} iff $d(f(x),f(y))\leq d(x,y)$ for all $x,y\in X$.
\begin{theorem}[Karlsson]\label{DW}
Suppose $(X,d)$ is a proper  Gromov hyperbolic metric space. Let $f:X\to X$ be a non-expanding self-map with escaping  forward orbits. Then there exists a unique $\zeta\in\partial_GX$, called the \textit{Denjoy--Wolff point} of $f$, so that for all $x\in X$
	$$\lim_{n\to\infty}f^n(x)=\zeta.$$
\end{theorem}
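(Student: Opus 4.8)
The plan is to route through the horofunction compactification and the canonical continuous surjection $\pi\colon\overline X^H\to\overline X^G$ provided by \eqref{WW}. Fix a base point $x_0\in X$ and set $a_n:=d(x_0,f^n x_0)$; the hypothesis that forward orbits diverge means $a_n\to+\infty$, and this does not depend on $x_0$ since $d(x,f^n x)\ge d(x_0,f^n x_0)-2d(x,x_0)$ by non-expansiveness of $f^n$. The goal is to manufacture a horofunction $h$ that stays bounded above along the orbit, put $\zeta:=\pi(\overline h)\in\partial_GX$, and prove $f^n x_0\to\zeta$; base-point independence and uniqueness of $\zeta$ will then follow quickly from non-expansiveness.

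To build $h$ I would use \emph{record times} of $(a_n)$: since $a_n\to+\infty$ there are infinitely many $n$ with $a_n=\max_{0\le j\le n}a_j$. At such an $n$ and for $0\le m\le n$ one has $d(f^m x_0,f^n x_0)=d\bigl(f^m(x_0),f^m(f^{n-m}x_0)\bigr)\le a_{n-m}\le a_n$, the first inequality by non-expansiveness of $f^m$ and the second by the record property. Hence the $1$-Lipschitz functions $c_n:=d(\,\cdot\,,f^n x_0)-a_n$ (which vanish at $x_0$) satisfy $c_n(f^m x_0)\le0$ for all $0\le m\le n$. By properness of $X$ and Ascoli--Arzel\`a, along a subsequence $(n_k)$ of record times the $c_{n_k}$ converge uniformly on compacta to a function $h$ with $h(x_0)=0$; since $a_{n_k}\to+\infty$ we have $\overline h\in\partial_HX$, and $h(f^m x_0)\le0$ for every $m\ge0$.

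Now set $\zeta:=\pi(\overline h)\in\partial_GX$. Since $i_H(f^{n_k}x_0)=\overline{c_{n_k}}\to\overline h$, continuity of $\pi$ gives $f^{n_k}x_0\to\zeta$ in $\overline X^G$. Moreover, for every fixed $m$,
$$\lim_{k\to\infty}\bigl(f^m x_0\,\vert\,f^{n_k}x_0\bigr)_{x_0}=\lim_{k\to\infty}\tfrac12\bigl(a_m+a_{n_k}-d(f^m x_0,f^{n_k}x_0)\bigr)=\tfrac12\bigl(a_m-h(f^m x_0)\bigr)\ \ge\ \tfrac12\,a_m .$$
Bridging two orbit points $f^m x_0,f^{m'}x_0$ (resp. an orbit point and some $f^{n_k}x_0$) through an $f^{n_k}x_0$ with $k$ chosen large \emph{after} fixing the other indices, the $\delta$-inequality then yields $(f^m x_0\,\vert\,f^{m'}x_0)_{x_0}\ge\tfrac12\min(a_m,a_{m'})-\delta$, which tends to $+\infty$ as $\min(m,m')\to\infty$; hence $(f^n x_0)$ goes to infinity in the Gromov sense, and the same bridging shows that its class equals $[f^{n_k}x_0]=\zeta$. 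By Remark \ref{infinitysequences} this means $f^n x_0\to\zeta$.

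Finally, for arbitrary $x\in X$ non-expansiveness gives $d(f^n x,f^n x_0)\le d(x,x_0)$ for all $n$, so $(f^n x)$ and $(f^n x_0)$ stay within bounded distance and therefore converge to the same point of $\partial_GX$ (again Remark \ref{infinitysequences}); since $\overline X^G$ is metrizable, hence Hausdorff, a sequence has at most one limit, so $\zeta$ is independent of $x$ and is the unique Denjoy--Wolff point. The step I expect to require the most care is the conclusion of the previous paragraph — turning the pointwise-in-$m$ estimate for $(f^m x_0\,\vert\,f^{n_k}x_0)_{x_0}$ into genuine convergence in $\overline X^G$, where the order of the quantifiers in the $\delta$-hyperbolic estimates matters. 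I should also note that this ``record times'' shortcut is special to the Gromov hyperbolic setting: for a general proper metric space (Karlsson's original framework) one instead exploits subadditivity of $(a_n)$ together with Karlsson's selection lemma to control the linear escape rate, see \cite{Kar}.
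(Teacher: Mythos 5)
Your proof is correct. Note that the paper does not prove Theorem \ref{DW} at all: it is quoted from Karlsson \cite{Kar}. Your argument is essentially a self-contained reconstruction of Karlsson's mechanism in the hyperbolic case: the selection of record times of $a_n=d(x_0,f^nx_0)$, giving a limit function $h$ of $d(\,\cdot\,,f^{n_k}x_0)-a_{n_k}$ with $h\le 0$ along the whole orbit, is exactly the key lemma there, and your final upgrade to convergence via the four-point inequality, with the quantifiers ordered as you do (fix $m,m'$, then let $k\to\infty$), is the standard hyperbolic step; so the point you flagged as delicate is handled correctly, and the subadditivity/escape-rate machinery is indeed not needed once hyperbolicity is available. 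Two small remarks. First, the one assertion you leave unjustified is $\overline h\in\partial_HX$: in the proper \emph{geodesic} setting (the paper's standing hypothesis, and in any case needed for its definition of $\partial_GX$ via rays) this follows because, moving from any $z$ along geodesics toward $f^{n_k}x_0$ and passing to the limit, one finds points where $h$ decreases by any prescribed amount, so $h$ is unbounded below, whereas every class $i_H(y)$ has a representative bounded below by $-d(x_0,y)$; in a merely proper, non-geodesic space this step can genuinely fail (a sequence leaving every compact set may have $i_H(w_n)\to i_H(y)$). Second, the detour through $\overline X^H$ and the map of \eqref{WW} is dispensable: your bridging estimate already shows that $(f^nx_0)$ goes to infinity in the Gromov sense, so by Remark \ref{infinitysequences} it converges to its class $\zeta\in\partial_GX$, and the bound $d(f^nx,f^nx_0)\le d(x,x_0)$ together with the same Gromov-product estimate transfers the limit to every orbit; this keeps the proof independent of \cite{WWpacific} and also sidesteps the point above.
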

\begin{remark}
\label{Rem_dicho}
In a proper metric space, a non-expanding self-map satisfies the following dynamical dichotomy: either all forward orbits are bounded, or all forwards orbits  \textit{escape}, that is they do not admit  bounded subsequences \cite{Calka}.
\end{remark}

Motivated by Karlsson's result, we show in this section  that relevant parts of the dynamical theory of holomorphic self-maps of the ball $\B^q$ (or more generally of bounded strongly convex domains of $\C^q$) extend naturally  to the case of non-expanding self-maps of   a proper geodesic Gromov hyperbolic metric space $X$. 
 We generalize classical concepts like  $K$-limits,  dilations, and boundary regular fixed points, and we  introduce to this general setting the study of the dynamics of backward orbits. Backward dynamics of a holomorphic self-map was  studied in the unit disc  by Bracci \cite{Br} and Poggi-Corradini \cite{PoCo1,PoCo2},  in the unit ball by Ostapyuk \cite{O}, and in bounded strongly convex domains by Abate--Raissy \cite{AbRaback,errata} (see also \cite{AAG, Ar2, AG} for the theory of canonical models associated with backward orbits).
 If  the compactifications $\overline X^H$ and $\overline X^G$ are topologically equivalent, we obtain stronger results, and in particular a generalization of the classical Julia's Lemma.

Let $(X,d)$ be a proper geodesic Gromov hyperbolic metric space.
Recall from  \eqref{WW} that the Gromov and horofunction compactifications satisfy $\overline X^G\ge \overline X^H$.
This means that there exists a continuous map $\Phi:\overline X^H\rightarrow \overline X^G$ such that ${\rm id}_X=\Phi\circ i_H$, where $i_{H}\colon X\rightarrow \overline X^H$ denotes the embedding of the space $X$ into the horofunction compactification.
For the sake of simplicity in this section we identify the point $i_H(x)$ with $x$ itself, and write $\overline X^{H}=X\sqcup \partial_{H}X$.
As a consequence, the restriction $\Phi|_X$ is the identity.

We will use the letters $H$ and $G$ to distinguish between the topologies of the two (possibly different) compactifications. 
In particular we say that a sequence is $H$-convergent (respectively $G$-convergent) if it is convergent with respect to the topology of $\overline X^H$ (respectively $\overline X^G$).
Notice that if $(a_n)$ is a sequence in $\overline X^H$ that $H$-converges to a point $a\in\overline X^H$, then the sequence $(\Phi(a_n))$  $G$-converges to $\Phi(a)$.

Furthermore, given a subset $S\subseteq X$, we  write ${\rm cl}_G\left(S\right)$ for its closure in $\overline X^G$,
and  ${\rm cl}_H\left(S\right)$ for its closure in $\overline X^H$.
When the space $(X,d)$ has approaching geodesics, then by Theorem \ref{abstract} the Gromov and horofunction compactifications are equivalent, so these distinctions are not needed.

\begin{proposition}[{\cite[Proposition 4.4]{WWpacific}}]
\label{prop:boundeddistance}
Let $(X,d)$ be a proper geodesic Gromov hyperbolic metric space and let $p\in X$. 
Given $a,b\in\partial_HX$, we have $\Phi(a)=\Phi(b)$ if and only if 
$$
\sup_{x\in X}|h_{a,p}(x)-h_{b,p}(x)|\le M,\qquad\text{ for some }M>0.
$$
Furthermore if the space is $\delta$-hyperbolic and $\Phi(a)=\Phi(b)$, then we can choose $M=2\delta$.
\end{proposition}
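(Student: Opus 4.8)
The idea is to push everything through Gromov products and let the four--point $\delta$--inequality do the work. Since $X$ is proper, $C_*(X)$ is metrizable and $i_H(X)$ is dense in $\overline{X}^H$, so I may choose sequences $(x_n),(y_n)$ in $X$ with $x_n\to a$ and $y_n\to b$ in $\overline{X}^H$. By continuity of $\Phi$ they converge to $\Phi(a)$ and $\Phi(b)$ in $\overline{X}^G$; in particular, by Remark \ref{infinitysequences}, they go to infinity in the Gromov sense with $[x_n]=\Phi(a)$ and $[y_n]=\Phi(b)$. Using $d(z,w)-d(w,p)=d(z,p)-2(z\vert w)_p$ together with the (uniform on compacts, hence pointwise) convergence defining the horofunctions, for every $z\in X$ the limits $\lim_n(z\vert x_n)_p$ and $\lim_n(z\vert y_n)_p$ exist and
\[
h_{a,p}(z)-h_{b,p}(z)=2\Big(\lim_n(z\vert y_n)_p-\lim_n(z\vert x_n)_p\Big).
\]
The whole proof reduces to estimating the right--hand side.

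\emph{The direction $\Phi(a)=\Phi(b)\Rightarrow$ boundedness (with the sharp constant).} Here $[x_n]=[y_n]$, so $(x_n\vert y_n)_p\to+\infty$. Fix $z\in X$ and take $n$ so large that $(x_n\vert y_n)_p\ge d(z,p)\ge (z\vert y_n)_p$; then the $\delta$--hyperbolic inequality $(z\vert x_n)_p\ge\min\{(z\vert y_n)_p,(y_n\vert x_n)_p\}-\delta$ becomes $(z\vert x_n)_p\ge (z\vert y_n)_p-\delta$. Letting $n\to\infty$, and symmetrizing, gives $|\lim_n(z\vert x_n)_p-\lim_n(z\vert y_n)_p|\le\delta$, hence $|h_{a,p}(z)-h_{b,p}(z)|\le 2\delta$ for all $z$. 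This simultaneously proves the implication and the bound $M=2\delta$.

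\emph{The converse, by contraposition.} Assume $\Phi(a)\neq\Phi(b)$ and evaluate the displayed identity at $z=x_m$: with $\alpha_m:=\lim_n(x_m\vert x_n)_p$ and $\beta_m:=\lim_n(x_m\vert y_n)_p$ one has $h_{a,p}(x_m)-h_{b,p}(x_m)=2(\beta_m-\alpha_m)$. Since $(x_n)$ goes to infinity in the Gromov sense, for every $R$ there is $N$ with $(x_m\vert x_n)_p\ge R$ whenever $m,n\ge N$; fixing $m\ge N$ and letting $n\to\infty$ gives $\alpha_m\ge R$, so $\alpha_m\to+\infty$. The sequence $(\beta_m)$ must be bounded: otherwise I could extract $m_j\to\infty$ with $\beta_{m_j}\to+\infty$ and then $n_j\to\infty$ with $(x_{m_j}\vert y_{n_j})_p\ge\beta_{m_j}-1\to+\infty$; but subsequences of $(x_n)$ and $(y_n)$ still converge to $\Phi(a)$ and $\Phi(b)$, so $(x_{m_j}\vert y_{n_j})_p\to+\infty$ would force $[x_{m_j}]=[y_{n_j}]$, i.e.\ $\Phi(a)=\Phi(b)$, a contradiction. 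Hence $h_{a,p}(x_m)-h_{b,p}(x_m)=2(\beta_m-\alpha_m)\to-\infty$, so $\sup_{x\in X}|h_{a,p}(x)-h_{b,p}(x)|=+\infty$, which is what contraposition requires.

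\emph{Main obstacle.} The delicate point is precisely the boundedness of $(\beta_m)$ in the converse: one is tempted to say $h_{a,p}(x_m)\to-\infty$ along $x_m\to a$, but this is false in general (horocyclic sequences are a counterexample), so the estimate really has to come from the diagonal extraction above combined with the fact that two distinct points of $\partial_G X$ have ``finite Gromov product''. Once that is isolated, the rest is a routine use of the four--point inequality. One should also verify the harmless bookkeeping — that $\partial_H X$ is reachable by sequences from $i_H(X)$, that $\Phi(\partial_H X)\subseteq\partial_G X$, and that the limits in $n$ exist — but these are immediate from the definitions recalled in Section \ref{sec:background}.
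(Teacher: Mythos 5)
Your proof is correct. Note that the paper itself gives no argument for Proposition \ref{prop:boundeddistance}: it is quoted from \cite[Proposition 4.4]{WWpacific}, so the only comparison available is with that source, and your route is essentially the standard one — rewrite $d(z,w)-d(w,p)=d(z,p)-2(z\vert w)_p$ so that differences of horofunctions become differences of limits of Gromov products, then let the four-point inequality do the work. The delicate points all check out: the limits $\lim_n(z\vert x_n)_p$ exist because $x_n\to a$ in $\overline X^H$ gives pointwise convergence of $d(\,\cdot\,,x_n)-d(x_n,p)$; in the forward direction the chain $(z\vert y_n)_p\le d(z,p)\le (x_n\vert y_n)_p$ (valid for $n$ large, since $\Phi(a)=\Phi(b)$ forces $(x_n\vert y_n)_p\to+\infty$) identifies the minimum in the four-point inequality correctly and yields exactly $M=2\delta$, under the Gromov-product formulation of $\delta$-hyperbolicity, which is the convention in which the constant of the statement is meant; and in the converse your diagonal extraction is the right substitute for the false shortcut $h_{a,p}(x_m)\to-\infty$, since $(x_{m_j}\vert y_{n_j})_p\to+\infty$ together with $[x_{m_j}]=\Phi(a)$ and $[y_{n_j}]=\Phi(b)$ (Remark \ref{infinitysequences}) would force $\Phi(a)=\Phi(b)$. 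The only bookkeeping worth writing out is $\Phi(\partial_HX)\subseteq\partial_GX$: if $\Phi(a)$ were a point of $X$, any sequence $x_n\to a$ in $\overline X^H$ would converge to it in $X$, so $d(\,\cdot\,,x_n)-d(x_n,p)$ would converge to $d(\,\cdot\,,\Phi(a))-d(\Phi(a),p)$ and hence $a=i_H(\Phi(a))\notin\partial_HX$, a contradiction — as you say, immediate from the definitions.
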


\begin{remark}\label{pennywise}
Notice that for all $a\in \partial_HX$, $z\in X$, $$d(p,z)\geq -h_{a,p}(z).$$
\end{remark}



\begin{proposition}
\label{prophor}
Let $(X,d)$ be a proper geodesic Gromov hyperbolic metric space and let $p\in X$. Then
\begin{enumerate}
\item For every $a\in\partial_HX$ and $R>0$, we have 
\begin{align*}
{\rm cl}_G\left(E_p(a,R)\right)\cap\partial_G X&=\{\Phi(a)\}.
\end{align*}
\item For every $a\in\partial_HX$, we have 
\begin{equation}
\label{intersectcomp}
\bigcap_{R>0}E_p(a,R)=\varnothing\qquad\text{and}\qquad \bigcap_{R>0}{\rm cl}_G\left(E_p(a,R)\right)=\{\Phi(a)\}.
\end{equation}
\item Let $a,b\in\partial_HX$ such that $\Phi(a)\neq\Phi(b)$. 
Then there exists $R>0$ such that ${\rm cl}_G\left(E_p(a,R)\right)$ and ${\rm cl}_G\left(E_p(b,R)\right)$ are disjoint.
\end{enumerate}
\end{proposition}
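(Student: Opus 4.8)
The plan is to establish the three parts in order, using (1) to prove (2) and then a compactness argument to deduce (3). Fix $\delta\ge 0$ so that $X$ is $\delta$-hyperbolic. The single device behind (1) and (2) is to attach to a given $a\in\partial_HX$ a geodesic ray $\gamma$ with $\gamma(0)=p$ and $[\gamma]=\Phi(a)=:\xi$. Since $\gamma(n)\to\overline{B_\gamma}$ in $\overline{X}^H$ while $\gamma(n)\to\xi$ in $\overline{X}^G$, continuity of $\Phi$ forces $\Phi(\overline{B_\gamma})=\Phi(a)$; because $B_\gamma(\,\cdot\,,p)$ is precisely the horofunction $h_{\overline{B_\gamma},p}$ (it has class $\overline{B_\gamma}$ and vanishes at $p$), Proposition~\ref{prop:boundeddistance} gives $\sup_{x\in X}|h_{a,p}(x)-B_\gamma(x,p)|\le 2\delta$. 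Thus $E_p(a,R)$ coincides, up to a uniform shift of the radius, with the Busemann horoball $\{B_\gamma(\,\cdot\,,p)<\log R\}$, which reduces everything to the geometry of a single ray.

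For part (1), I would first check $\Phi(a)\in{\rm cl}_G(E_p(a,R))$: from $B_\gamma(\gamma(T),p)=-T$ and the bound above, $h_{a,p}(\gamma(T))\le -T+2\delta<\log R$ for $T$ large, so the tail of $\gamma$ lies in $E_p(a,R)$ and $G$-converges to $\xi$. For the reverse inclusion, take $(y_n)$ in $E_p(a,R)$ with $y_n\to\eta\in\partial_GX$; properness of $X$ forces $d(y_n,p)\to+\infty$, and $h_{a,p}(y_n)<\log R$ together with the bound gives $B_\gamma(y_n,p)<\log R+2\delta$. Since $(y_n\vert\gamma(t))_p\to\tfrac12\bigl(d(y_n,p)-B_\gamma(y_n,p)\bigr)$ as $t\to+\infty$, the Gromov product of $y_n$ with the boundary point $\xi$ is bounded below by $\tfrac12\bigl(d(y_n,p)-\log R\bigr)-\delta$, hence tends to $+\infty$; and $y_n\to\eta$ forces $(y_n\vert\eta)_p\to+\infty$. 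The four-point inequality extended to $\partial_GX$ (see \cite[Ch.~III.H]{BH}, \cite{GdlH}) gives $(\xi\vert\eta)_p\ge\min\{(y_n\vert\xi)_p,(y_n\vert\eta)_p\}-C'$ with $C'=C'(\delta)$; letting $n\to+\infty$ yields $(\xi\vert\eta)_p=+\infty$, which is impossible unless $\xi=\eta$. Hence $\eta=\Phi(a)$, proving ${\rm cl}_G(E_p(a,R))\cap\partial_GX=\{\Phi(a)\}$.

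Part (2) is then largely formal. By Remark~\ref{pennywise}, $h_{a,p}(x)\ge-d(p,x)>-\infty$ for every $x\in X$, so $x\notin E_p(a,R)$ once $\log R<-d(p,x)$; hence $\bigcap_{R>0}E_p(a,R)=\varnothing$. For the second identity, $\Phi(a)$ lies in every ${\rm cl}_G(E_p(a,R))$ by part (1); conversely, a point $z$ in $\bigcap_{R>0}{\rm cl}_G(E_p(a,R))$ cannot lie in $X$, since then $z\in{\rm cl}_X(E_p(a,R))\subseteq\{h_{a,p}\le\log R\}$ for all $R>0$ by continuity of $h_{a,p}$, forcing $h_{a,p}(z)=-\infty$; so $z\in\partial_GX$, and part (1) pins it down as $\Phi(a)$. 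Finally, for part (3) I would argue by contradiction: if ${\rm cl}_G(E_p(a,R))\cap{\rm cl}_G(E_p(b,R))\ne\varnothing$ for every $R>0$, these form a nested decreasing family of nonempty compact subsets of $\overline{X}^G$, so by the finite intersection property their total intersection is nonempty; but that intersection equals $\bigl(\bigcap_{R}{\rm cl}_G(E_p(a,R))\bigr)\cap\bigl(\bigcap_{R}{\rm cl}_G(E_p(b,R))\bigr)=\{\Phi(a)\}\cap\{\Phi(b)\}=\varnothing$ by part (2), since $\Phi(a)\ne\Phi(b)$.

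I expect the only genuine obstacle to be the reverse inclusion in part (1): converting the estimate $B_\gamma(y_n,p)<\log R+2\delta$ into the divergence of the Gromov product of $y_n$ with the boundary point $\xi$, and then applying the boundary version of the hyperbolic four-point inequality to separate $\eta$ from $\xi$ — this requires some care with the various additive $\delta$-errors built into the definitions of $(y\vert\xi)_p$ and $(\xi\vert\eta)_p$ on $\partial_GX$. Everything else — continuity of horofunctions, properness, compactness of $\overline{X}^G$, and the set-theoretic identity $\bigcap_R(A_R\cap B_R)=\bigcap_R A_R\cap\bigcap_R B_R$ — is routine.
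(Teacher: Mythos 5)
Your proof is correct, but for the crucial reverse inclusion in part (1) it takes a genuinely different route from the paper. The paper does not pass through Busemann functions or boundary Gromov products there at all: given $x_n\in E_p(a,R)$ $G$-converging to a boundary point, it picks a sequence $(w_n)$ in $X$ that $H$-converges to $a$, uses the definition of $h_{a,p}$ as a limit of $d(\,\cdot\,,w_m)-d(w_m,p)$ to choose indices $m_n\to\infty$ with $d(x_n,w_{m_n})-d(w_{m_n},p)\le\log R$, deduces $2(x_n\vert w_{m_n})_p\ge d(x_n,p)-\log R\to+\infty$, and concludes via continuity of $\Phi$ and the sequence description of $\partial_GX$ (Remark \ref{infinitysequences}) that $(x_n)$ $G$-converges to $\Phi(a)$. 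You instead replace this with the uniform comparison $\sup_X|h_{a,p}-B_\gamma(\,\cdot\,,p)|\le 2\delta$ (your justification that $\Phi(\overline{B_\gamma})=\Phi(a)$, which is needed to invoke Proposition \ref{prop:boundeddistance}, is correct), then derive divergence of $(y_n\vert\xi)_p$ and apply the $\delta$-inequality extended to boundary points; this is sound, but it imports standard facts about Gromov products of boundary points that the paper never sets up, whereas the paper's argument stays entirely at the level of sequences and the definition of the horofunction compactification. The rest essentially coincides: the forward inclusion in (1) (tail of a ray from $p$ plus Proposition \ref{prop:boundeddistance}) and part (2) match the paper, and in (3) your finite-intersection-property argument on the nested compact sets ${\rm cl}_G\left(E_p(a,R)\right)\cap{\rm cl}_G\left(E_p(b,R)\right)$ is a minor variant of the paper's, which instead shrinks each closure into a small metric ball around $\Phi(a)$, respectively $\Phi(b)$.
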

\begin{proof}


{(1)} Let $\gamma$ be a geodesic ray starting at $p$ and so that $\Phi(a)=[\gamma]$.
Let $b:=B(\gamma)\in \partial_HX$ be the image of the geodesic ray under the Busemann map defined in Section \ref{sec:background}. Notice that $\Phi(a)=\Phi(b)$.

Given $R>0$ the point $x_n:=\gamma(n)$ belongs to the horosphere $E_p(b,R)$ for every $n$ sufficiently large. 
Indeed we have that
$$
h_{b,p}(x_n)=\lim_{t\to\infty} d(\gamma(n),\gamma(t))-d(\gamma(t),p)=-n.
$$

By Proposition \ref{prop:boundeddistance} we conclude that $h_{a,p}(x_n)$ also converges to $-\infty$, and thus that $x_n\in E_p(a,R)$ for every $n$ sufficiently large.
This implies that $\Phi(a)\in {\rm cl}_G\left(E_p(a,R)\right)$.

Suppose that $(x_n)$ is a sequence in $E_p(a,R)$ that $G$-converges to a point in $\partial_G X$. 
Let $(w_n)$  be a sequence in $X$ that $H$-converges to $a$. 
For every $n\ge 0$ we may choose a positive integer $m_n\ge 0$ such that $d(x_n,w_{m_n})-d(w_{m_n},p)\le \log R$. 
Without loss of generality, we may further assume that $\displaystyle \lim_{n\to\infty}m_n=\infty$.

We find that
\begin{align*}
2(x_n|w_{m_n})_p&=d(x_n,p)+d(w_{m_n},p)-d(x_n,w_{m_n})\ge d(x_n,p)- \log R,
\end{align*}
and therefore $\displaystyle \lim_{n\to\infty} (x_n|w_{m_n})_p=+\infty$. 
The sequence $(w_n)$ $G$-converges to $\Phi(a)$, 
therefore by Remark \ref{infinitysequences} we obtain that the sequence $(x_n)$ also $G$-converges to $\Phi(a)$.
This proves that $\Phi(a)$ is the unique point in ${\rm cl}_G\left(E_p(a,R)\right)\cap\partial_GX$.

\par\medskip
{(2)} Suppose there exists $x\in\bigcap_{R>0}E_p(a,R)$. Then for every $R>0$ we have, by Remark \ref{pennywise},
$$
-d(x,p)\le h_{a,p}(x)<\log R,
$$
and therefore $d(x,p)>-\log R$, giving a contradiction.
Since the horofunction $h_{a,p}$ is continuous on $X$, given $0<R<R'$, one has $\{\Phi(a)\}\subseteq {\rm cl}_G\left(E_p(a,R)\right)\subseteq E_p(a,R')\cup\{\Phi(a)\}$, and therefore $\bigcap_{R>0}{\rm cl}_G\left(E_p(a,R)\right)=\{\Phi(a)\}$.

{(3)} The Gromov compactification $\overline X^G$ is metrizable, therefore we may choose a distance $\widetilde d$ inducing the topology of $\overline X^G$. 
We write $\widetilde B(x,\varepsilon)$ for the ball of center $x\in \overline X^G$ and radius $\varepsilon>0$, with respect to  $\widetilde d$.

By \eqref{intersectcomp} it follows that
\begin{align*}
\varnothing&=\{\Phi(a)\}\setminus \widetilde B(\Phi(a),\varepsilon)=\bigcap_{R>0} {\rm cl}_G\left(E_p(a,R)\right)\setminus \widetilde B(\Phi(a),\varepsilon).
\end{align*}

The last term on the right is the intersection of a nested family of compact sets. Therefore we may find $R>0$ so that ${\rm cl}_G\left(E_p(a,R)\right)\subseteq \widetilde B(\Phi(a),\varepsilon)$. The result follows by taking $\varepsilon$ sufficiently small so that the balls with center $\Phi(a)$ and $\Phi(b)$ and radius $\varepsilon$ are disjoint.
\end{proof}

We introduce the following definition.
\begin{definition}
Let $(X,d)$ be a proper geodesic Gromov hyperbolic metric space. 
Given $R>0$ and  a geodesic ray $\gamma\in \mathscr{R}(X)$, the \textit{geodesic region} $A(\gamma,R)$ is the open subset of $X$ of the form
$$A(\gamma, R):=\{x\in X\colon d(x,\gamma)<R\}.$$
The point $[\gamma]\in\partial_GX$ is called the \textit{vertex} of the geodesic region.

\end{definition}

\begin{remark}
If $D\subset \C^q$ is a bounded strongly convex domain with $C^3$ boundary, geodesic regions are comparable to Koranyi regions (see \cite[Section 7]{AAG}). 
\end{remark}

\begin{proposition}\label{propA}
Let $(X,d)$ be a proper geodesic Gromov hyperbolic metric space.
\begin{enumerate}
	\item  Two geodesic regions with same vertex are comparable.
	More precisely, given two geodesic rays $\gamma,\sigma\in \mathscr{R}(X)$ with $[\gamma]=[\sigma]$, we can find $R_0>0$ so that 
	$$A(\sigma,R)\subseteq A(\gamma,R+R_0),\quad A(\gamma,R)\subseteq A(\sigma,R+R_0),\qquad\forall R>0.$$
	\item Every geodesic region satisfies
	$$ {\rm cl}_G\left(A(\gamma,R)\right)\cap \partial_GX=\{[\gamma]\}.$$
	\item Let $(x_n)$ in $A(\gamma,R)$ be a sequence $G$-converging to $[\gamma]$. Then for every base point $p\in X$, we have
	$$\lim_{n\to\infty}B_\gamma(x_n,p)=-\infty.$$
\end{enumerate}
\end{proposition}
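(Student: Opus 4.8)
The plan is to treat the three statements in order, using $\delta$-hyperbolicity (thin triangles, or equivalently the four-point condition) throughout, together with the characterizations of $G$-convergence from Remark \ref{infinitysequences}.

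For part (1), fix geodesic rays $\gamma,\sigma$ with $[\gamma]=[\sigma]$. Since they are asymptotic, there is a constant $D_0$ with $\inf_{s}d(\gamma(t),\sigma(s))$ bounded; more usefully, after reparametrizing, standard hyperbolic geometry (e.g.\ the fact that two asymptotic rays eventually stay within $2\delta$ of each other, or a direct application of the thin-triangle/stability estimate) gives $R_0>0$ with $\sup_t\inf_s d(\gamma(t),\sigma(s))\le R_0/2$ and symmetrically. Then for any $x$ with $d(x,\sigma)<R$, pick $s$ with $d(x,\sigma(s))<R$ and $t$ with $d(\sigma(s),\gamma(t))\le R_0$, so $d(x,\gamma)\le d(x,\gamma(t))<R+R_0$; this gives $A(\sigma,R)\subseteq A(\gamma,R+R_0)$, and the reverse inclusion is symmetric. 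I expect this to be the easiest step.

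For part (2), first note $[\gamma]\in {\rm cl}_G(A(\gamma,R))$ since the sequence $\gamma(n)$ lies in $A(\gamma,R)$ (indeed on $\gamma$) and $G$-converges to $[\gamma]$. Conversely, let $(x_n)$ be a sequence in $A(\gamma,R)$ that $G$-converges to some $\eta\in\partial_GX$. Choose $y_n$ on $\gamma$ with $d(x_n,y_n)<R$; since $(x_n)$ diverges and $d(x_n,p)\le d(x_n,y_n)+d(y_n,p)$ forces $d(y_n,p)\to\infty$, i.e.\ $y_n$ runs off to infinity along $\gamma$. Then $(x_n|\,y_n)_p\ge d(x_n,p)-R\to\infty$ (using $d(x_n,y_n)<R$ and the definition of the Gromov product), so by Remark \ref{infinitysequences} the sequences $(x_n)$ and $(y_n)$ define the same boundary point; since $y_n\to[\gamma]$ we get $\eta=[\gamma]$. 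Hence ${\rm cl}_G(A(\gamma,R))\cap\partial_GX=\{[\gamma]\}$.

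For part (3), let $(x_n)$ in $A(\gamma,R)$ $G$-converge to $[\gamma]$, and fix a base point $p$; we may take $p=\gamma(0)$ since $B_\gamma(\cdot\,,p)$ changes by an additive constant under change of base point. Pick $y_n=\gamma(s_n)$ with $d(x_n,y_n)<R$; as in part (2), $s_n\to+\infty$. Now estimate $B_\gamma(x_n,p)$ directly: for $t\ge s_n$,
\begin{align*}
d(x_n,\gamma(t))-d(\gamma(t),p)&\le d(x_n,\gamma(s_n))+d(\gamma(s_n),\gamma(t))-d(\gamma(t),p)\\
&< R+(t-s_n)-t=R-s_n,
\end{align*}
using $d(\gamma(t),p)=d(\gamma(t),\gamma(0))=t$ and $d(\gamma(s_n),\gamma(t))=t-s_n$. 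Letting $t\to+\infty$ gives $B_\gamma(x_n,p)\le R-s_n\to-\infty$, which is exactly the claim. The main obstacle, such as it is, will be making sure the reparametrization bookkeeping in parts (1) and (2) is airtight — in particular extracting the constant $R_0$ in part (1) cleanly from $\delta$-hyperbolicity and confirming that $d(y_n,p)\to\infty$ in part (2) genuinely forces $y_n$ to escape to the correct end of $\gamma$ — but none of this is deep; the Gromov-product computations in parts (2) and (3) are the real content and are short.
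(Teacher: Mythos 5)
Your proof is correct and follows essentially the same route as the paper: the triangle inequality with the boundedness constant for part (1), a Gromov-product estimate identifying the limits of $(x_n)$ and of the nearby points on $\gamma$ for part (2), and the direct estimate $B_\gamma(x_n,p)\le R+d(\gamma(0),p)-s_n\to-\infty$ for part (3). The only cosmetic difference is that in part (1) you invoke $\delta$-hyperbolicity to produce $R_0$, whereas the paper simply uses that $[\gamma]=[\sigma]$ means $\sup_t d(\gamma(t),\sigma(t))$ is finite by the very definition of asymptotic rays, so no hyperbolicity is needed there.
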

\proof
(1) The geodesic rays $\gamma$ and $\sigma$ are asymptotic, meaning that there exists $R_0>0$ such that
$$\sup_{t\geq0}d(\gamma(t),\sigma(t))<R_0.$$
 Let $R>0$. Given $x\in A(\sigma,R)$ we can choose $t^*\geq0$ so that $d(x,\sigma(t^*))<R$. Then
$$d(x,\gamma(t^*))\leq d(x,\sigma(t^*))+d(\sigma(t^*),\gamma(t^*))<R+R_0,$$
i.e. $x\in A(\gamma,R+R_0)$. 

(2) For every $R,t>0$ we have $\gamma(t)\in A(\gamma,R)$, showing that $[\gamma]\in {\rm cl}_G\left(A(\gamma,R)\right)\cap \partial_GX$. Conversely, let $(x_n)$ be a sequence in $A(\gamma,R)$ $G$-converging to $\xi\in\partial_GX$. For every $n\in \mathbb{N}$ let $t_n\geq0$ such that $d(x_n,\gamma(t_n))<R$. Fix a base point $p\in X$. Then one has
$$2(x_n|\gamma(t_n))_{p}=d(x_n,p)+d(\gamma(t_n),p)-d(x_n,\gamma(t_n))>d(x_n,p)+d(\gamma(t_n),p)-R\to+\infty.$$
It follows that both sequences $(x_n)$ and $(\gamma(t_n))$ $G$-converge to the same point, i.e. $\xi=[\gamma]$.

(3) For every $n\in\N$ let $t_n\geq0$ such that $d(x_n,\gamma(t_n))<R$.  For all $t\geq t_n$ we have
\begin{align*}d(x_n,\gamma(t))-d(\gamma(t),p)&\leq d(x_n,\gamma(t_n))+d(\gamma(t_n),\gamma(t))-d(\gamma(t),\gamma(0))+d(\gamma(0),p)\\&\leq R+d(\gamma(0),p)-t_n.
\end{align*}
Since $t_n\to+\infty$,  it follows that $B_\gamma(x_n,p)\to -\infty$.
\endproof

Geodesic regions allow us to generalize the classical $K$-limits from complex analysis to proper geodesic Gromov hyperbolic metric spaces.
\begin{definition}
Let $(X,d)$ be a proper geodesic Gromov hyperbolic metric space and let $Y$ be a Hausdorff topological space. Let $f\colon X\rightarrow Y$ a map and  let $\eta\in\partial_GX$, $\xi\in Y$. We say that $f$ has \textit{geodesic limit} $\xi$ at $\eta$
if for every sequence $(x_n)$ converging to $\eta$ contained in a geodesic region with vertex $\eta$, the sequence $(f(x_n))$ converges to $\xi$.
\end{definition}

\begin{definition}
Let $(X,d)$ be a proper geodesic Gromov hyperbolic metric space. 
 Let $f\colon X\to X$ be a non-expanding self-map. 
Given $\eta\in \partial_GX$, we define the \textit{dilation} of $f$ at $\eta$ with respect to the  base point $p\in X$ as the (possibly infinite) number $\lambda_{\eta,p}>0$ such that
$$\log\lambda_{\eta,p}=\liminf_{z\to \eta} d(z,p)-d(f(z),p).$$ 
\end{definition}
For the definition of the dilation  of a holomorphic self-map at a boundary point, see  e.g.\cite{AbBook}.
\begin{remark}
Notice that 
\begin{equation}\label{cyril}
d(p,z)-d(p,f(z))\geq  d(f(p),f(z))-d(p,f(z))\geq -d(p,f(p)).
\end{equation}
In particular, $\log\lambda_{\eta,p}>-\infty.$
\end{remark}
\begin{lemma}
Let $(X,d)$ be a proper geodesic Gromov hyperbolic metric space. 
 Let $f\colon X\to X$ be a non-expanding self-map and let $\eta\in \partial_GX$. Assume that there exists $p\in X$ such that
 $\log\lambda_{\eta,p}<+\infty$. Then for all $q\in X$,
 $$\log\lambda_{\eta,q}\leq \log\lambda_{\eta,p}+2d(p,q).$$
\end{lemma}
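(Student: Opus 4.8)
The plan is to reduce the statement to a single application of the triangle inequality, together with the elementary facts that $\liminf$ is monotone and commutes with the addition of a constant. Notably, non-expansiveness of $f$ plays no role here; the only thing used is that $f$ maps $X$ into $X$.

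First I would fix an arbitrary point $z\in X$ and compare $d(z,q)-d(f(z),q)$ with its counterpart based at $p$. From $d(z,q)\le d(z,p)+d(p,q)$ and $d(f(z),q)\ge d(f(z),p)-d(p,q)$ one gets
\[
d(z,q)-d(f(z),q)\le\bigl(d(z,p)-d(f(z),p)\bigr)+2d(p,q),
\]
and this inequality is valid for every $z\in X$.

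Next I would pass to the $\liminf$ as $z\to\eta$ in $\overline{X}^G$ on both sides. Since $2d(p,q)$ is a constant, the right-hand side has $\liminf$ equal to $\log\lambda_{\eta,p}+2d(p,q)$, which is finite by hypothesis, while the left-hand side has $\liminf$ equal to $\log\lambda_{\eta,q}$ by the very definition of the dilation. This yields $\log\lambda_{\eta,q}\le\log\lambda_{\eta,p}+2d(p,q)$.

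I do not foresee a real obstacle. The only point deserving a word of care is the legitimacy of passing to the $\liminf$, i.e.\ that a pointwise inequality $g_1\le g_2$ on a neighbourhood of $\eta$ forces $\liminf_{z\to\eta}g_1\le\liminf_{z\to\eta}g_2$; this is immediate from the description $\liminf_{z\to\eta}g=\sup_{U\ni\eta}\inf_{z\in U\cap X}g(z)$, the supremum being over neighbourhoods $U$ of $\eta$ in $\overline{X}^G$. One may also remark that the finiteness assumption on $\log\lambda_{\eta,p}$ only serves to keep the bound meaningful: the displayed inequality holds in $[-\infty,+\infty]$ in all cases.
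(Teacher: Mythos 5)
Your proof is correct and is essentially the paper's argument: the same triangle-inequality estimate producing the additive error $2d(p,q)$, with the only cosmetic difference that the paper evaluates it along a sequence realizing $\log\lambda_{\eta,p}$ while you invoke monotonicity of $\liminf$ on the pointwise inequality. Your remark about the legitimacy of passing to the $\liminf$ is the right (and only) point of care, and it is handled correctly.
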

\begin{proof}
Let $(x_n)$ be a sequence converging to $\eta$ such that $ d(x_n,p)-d(f(x_n),p)\underset{n \to \infty}{\longrightarrow} \log\lambda_{\eta,p}$.\
We have 
$$d(q,x_n)-d(q,f(x_n))\leq d(q,p)+d(p,x_n)-d(p, f(x_n))+d(p,q).$$
The result follows by letting $n\to \infty$.
\end{proof}

\begin{remark}
It follows from the previous lemma that the condition $\lambda_{\eta,p}<+\infty$ is independent on the choice of the base point $p\in X$.
\end{remark}

\begin{lemma}\label{weakJ}
Let $(X,d)$ be a proper metric space and let $f\colon X\to X$ a non-expanding map. Assume that there exists a sequence $(w_n)$ in $X$ such that 
\begin{enumerate}
\item $w_n\underset{n \to \infty}{\longrightarrow} a\in \partial_HX$,
\item $f(w_n)\underset{n \to \infty}{\longrightarrow} b\in \partial_HX$, and
\item $d(p, w_n)-d(p, f(w_n))\underset{n \to \infty}{\longrightarrow} A<+\infty.$
\end{enumerate}
Then
\begin{equation}\label{horofunctionjulia}
h_{b,p}\circ f\leq h_{a,p}+A.
\end{equation}
\end{lemma}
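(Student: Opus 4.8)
The plan is to pass to the limit along the sequence $(w_n)$ in the contraction inequality $d(f(z),f(w_n))\le d(z,w_n)$, after normalizing all the distance functions at the base point $p$ so that they converge to the relevant horofunctions.

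First I would recall how convergence in $\overline X^H$ is read off: by Definition \ref{horofunctionboundary} and the remark following it, $w_n\to a$ in $\overline X^H$ means that the functions $x\mapsto d(x,w_n)-d(w_n,p)$ converge in $C(X)$, i.e. uniformly on compact subsets and in particular pointwise, to $h_{a,p}$; likewise $f(w_n)\to b$ gives pointwise convergence of $x\mapsto d(x,f(w_n))-d(f(w_n),p)$ to $h_{b,p}$. Fix $z\in X$. Evaluating the first family at $z$ and the second at $f(z)$, and adjoining hypothesis (3), I get three convergent sequences of real numbers:
$$d(z,w_n)-d(w_n,p)\to h_{a,p}(z),\qquad d(f(z),f(w_n))-d(f(w_n),p)\to h_{b,p}(f(z)),\qquad d(w_n,p)-d(f(w_n),p)\to A.$$
Now I invoke non-expansiveness: $d(f(z),f(w_n))\le d(z,w_n)$ for every $n$. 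Subtracting $d(f(w_n),p)$ from both sides and inserting $\pm\, d(w_n,p)$ on the right gives
$$d(f(z),f(w_n))-d(f(w_n),p)\ \le\ \big(d(z,w_n)-d(w_n,p)\big)+\big(d(w_n,p)-d(f(w_n),p)\big),$$
and letting $n\to\infty$ yields $h_{b,p}(f(z))\le h_{a,p}(z)+A$. Since $z\in X$ is arbitrary, this is exactly \eqref{horofunctionjulia}.

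There is essentially no obstacle here: the argument is a single limiting step on the contraction inequality. The only points deserving (minor) care are that the three displayed sequences genuinely converge — which is precisely what the hypotheses guarantee, once one remembers that convergence in the horofunction compactification is convergence of the base-point-normalized distance functions in $C(X)$ — and that $A$ is a finite real number (indeed $A\ge -d(p,f(p))$ by \eqref{cyril}, though only $A<+\infty$ is used), so that the resulting inequality is meaningful.
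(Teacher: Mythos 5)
Your proof is correct and follows essentially the same route as the paper's: apply non-expansiveness to get $d(f(z),f(w_n))-d(f(w_n),p)\leq d(z,w_n)-d(w_n,p)+d(w_n,p)-d(f(w_n),p)$ and pass to the limit, using that $H$-convergence of $(w_n)$ and $(f(w_n))$ means convergence of the base-point-normalized distance functions to $h_{a,p}$ and $h_{b,p}$. No gaps.
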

\begin{proof}
For all $z\in X$,  
\begin{align*}
d(f(z),f(w_n))-d(f(w_n),p)&\leq d(z,w_n)-d(f(w_n),p)\\
&\leq  d(z,w_n)-d(w_n,p) +d(w_n,p) -d(f(w_n),p).
\end{align*}
Taking the limit as $n\to \infty$ on both sides we obtain \eqref{horofunctionjulia}.
\end{proof}

\begin{proposition}\label{strangerthings}
Let $(X,d)$ be a proper geodesic Gromov hyperbolic metric space. 
 Let $f\colon X\to X$ be a non-expanding self-map. Let $p\in X$ and   $\eta\in \partial_GX$ be such that $\lambda_{\eta,p}<+\infty$. Then there exists $\xi\in \partial_GX$ such that  $f$ has geodesic limit $\xi$ at $\eta$.
 \end{proposition}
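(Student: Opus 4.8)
The plan is to combine the Denjoy--Wolff type machinery with the horosphere estimates from Lemma \ref{weakJ} and Proposition \ref{prophor}. First I would fix a geodesic ray $\gamma$ with $\gamma(0)=p$ and $[\gamma]=\eta$, and let $b:=B(\gamma)\in\partial_HX$ be its Busemann point, so that $\Phi(b)=\eta$. Using the definition of $\lambda_{\eta,p}$, choose a sequence $(w_n)$ in $X$ with $w_n\to\eta$ in $\overline{X}^G$ and $d(w_n,p)-d(f(w_n),p)\to\log\lambda_{\eta,p}<+\infty$. By properness and compactness of $\overline{X}^H$ we may pass to a subsequence so that $w_n\to a\in\partial_HX$ (note $\Phi(a)=\eta$ since $H$-convergence pushes forward to $G$-convergence) and $f(w_n)\to \widetilde b\in\overline{X}^H$; since $d(p,f(w_n))\geq d(f(p),f(w_n))-d(f(p),p)\geq d(p,w_n)-d(p,f(p))-d(f(p),p)\to+\infty$, the point $\widetilde b$ lies in $\partial_HX$. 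Set $\xi:=\Phi(\widetilde b)\in\partial_GX$; this will be the claimed geodesic limit. By Lemma \ref{weakJ} we get the ``Julia inequality'' $h_{\widetilde b,p}\circ f\leq h_{a,p}+\log\lambda_{\eta,p}$.

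Next I would translate this into an inclusion of horospheres: for every $R>0$, if $h_{a,p}(z)<\log R$ then $h_{\widetilde b,p}(f(z))<\log(R\lambda_{\eta,p})$, i.e.
$$f\big(E_p(a,R)\big)\subseteq E_p\big(\widetilde b, R\lambda_{\eta,p}\big).$$
Now take any sequence $(x_n)$ converging to $\eta$ inside a geodesic region $A(\gamma',R_0)$ with vertex $\eta$. The key geometric input is that a geodesic region with vertex $\eta$ is eventually contained in every horosphere centered at a point of $\partial_HX$ lying over $\eta$: indeed by Proposition \ref{propA}(3) we have $B_{\gamma'}(x_n,p)\to-\infty$, and since $\Phi(B(\gamma'))=\eta=\Phi(a)$, Proposition \ref{prop:boundeddistance} gives $|h_{B(\gamma'),p}-h_{a,p}|\leq 2\delta$ uniformly, hence $h_{a,p}(x_n)\to-\infty$. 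Therefore for each fixed $R>0$ we have $x_n\in E_p(a,R)$ for $n$ large, so $f(x_n)\in E_p(\widetilde b,R\lambda_{\eta,p})$ for $n$ large. Since also $f(x_n)$ diverges (the orbit of $f$ is non-bounded because $d(f(x_n),p)\to\infty$ as $h_{\widetilde b,p}(f(x_n))\to-\infty$ forces $d(f(x_n),p)\geq -h_{\widetilde b,p}(f(x_n))\to\infty$ via Remark \ref{pennywise}), any $G$-subsequential limit of $(f(x_n))$ lies in $\partial_GX$ and, by Proposition \ref{prophor}(1), must equal $\Phi(\widetilde b)=\xi$. Hence $f(x_n)\to\xi$ in $\overline{X}^G$.

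The main obstacle I anticipate is making rigorous the independence from the chosen geodesic region, i.e. proving the uniform estimate $h_{a,p}(x_n)\to-\infty$ along any sequence converging to $\eta$ inside \emph{any} geodesic region with vertex $\eta$. This is exactly where Proposition \ref{prop:boundeddistance} (bounded difference of horofunctions lying over the same Gromov point) together with Proposition \ref{propA}(1) and (3) do the work: all Busemann points $B(\gamma')$ with $[\gamma']=\eta$ give horofunctions within $2\delta$ of each other, and the geodesic region forces the Busemann function to tend to $-\infty$ along $(x_n)$; the remaining subtlety is only that not every point of $\partial_HX$ over $\eta$ need be a Busemann point, but Proposition \ref{prop:boundeddistance} handles the general point $a$ as well. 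A secondary point to check carefully is that $\widetilde b$ (and hence $\xi$) does not depend on the subsequence chosen for $(w_n)$: if two such limits $\widetilde b,\widetilde b'$ arose, then by Proposition \ref{prophor}(1) applied to both we would still get that every geodesic sequence $f(x_n)$ accumulates only at $\Phi(\widetilde b)$ and only at $\Phi(\widetilde b')$, forcing $\Phi(\widetilde b)=\Phi(\widetilde b')$, so $\xi$ is well defined independently of all choices. Once this is in place the proof is complete.
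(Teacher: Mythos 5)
Your proposal follows essentially the same route as the paper's proof of Proposition \ref{strangerthings}: take a minimizing sequence $(w_n)$ for the dilation, extract $H$-limits $a\in\Phi^{-1}(\{\eta\})$ and $b\in\partial_HX$, apply Lemma \ref{weakJ}, compare $h_{a,p}$ with a Busemann function lying over $\eta$ via Proposition \ref{prop:boundeddistance}, use Proposition \ref{propA}(3) to force $h_{b,p}(f(x_n))\to-\infty$ along sequences in a geodesic region, and conclude with Remark \ref{pennywise} and Proposition \ref{prophor}(1); your choice of working with $B(\gamma')$ for the given region, rather than a fixed ray combined with Proposition \ref{propA}(1), is an equivalent variant. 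One slip: the chain $d(p,f(w_n))\geq d(f(p),f(w_n))-d(f(p),p)\geq d(p,w_n)-d(p,f(p))-d(f(p),p)$ is not valid, since non-expansiveness only gives the upper bound $d(f(p),f(w_n))\leq d(p,w_n)$; the correct (and simpler) reason that $d(f(w_n),p)\to+\infty$, as in the paper, is that $d(w_n,p)\to+\infty$ while $d(w_n,p)-d(f(w_n),p)$ converges to the finite number $\log\lambda_{\eta,p}$.
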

 \begin{proof}
Let $(w_n)$ be a sequence $G$-converging to $\eta$ such that $d(w_n,p)-d(f(w_n),p)\underset{n\to \infty}{\longrightarrow} \log \lambda_{\eta,p}$. The $G$-convergence of $(w_n)$ implies that $d(w_n,p)\underset{n \to \infty}{\longrightarrow} +\infty$. Since $\log \lambda_{\eta,p}$ is finite, we obtain $d(f(w_n),p)\underset{n \to \infty}{\longrightarrow}+\infty$. Consequently, up to extracting a subsequence we may assume that $(w_n)$ $H$-converges to a point $a\in \Phi^{-1}(\left\lbrace\eta\right\rbrace )$ and $(f(w_n))$ $H$-converges to a point $b \in \partial_HX$. By Lemma \ref{weakJ} we obtain
$$h_{b,p}\circ f\leq h_{a,p}+\log\lambda_{\eta,p}.$$
Let $\gamma$ be a geodesic ray such that $[\gamma]=\eta$, and consider the horofunction $B_\gamma\left(\cdot,p\right)$, where $B_\gamma$ is the Busemann function associated to $\gamma$. 
By Proposition \ref{prop:boundeddistance}, there exists a constant $M>0$ such that 
$$|h_{a,p}(x)-B_\gamma(x,p)|\leq M,\quad\forall x\in X.$$ Hence
$$h_{b,p}\circ f\leq B_\gamma(\cdot,p)+M+\log\lambda_{\eta,p}.$$
Let $(x_n)$ be a sequence $G$-converging to $\eta$ inside a geodesic region. By point (3) of Proposition \ref{propA} we have that
$B_\gamma(x_n,p)\to-\infty$, hence  $h_{b,p}(f(x_n))\to-\infty.$ By Remark \ref{pennywise}  we have that $d(f(x_n),p)\to+\infty.$
It follows from Proposition \ref{prophor} that the sequence $(f(x_n))$ $G$-converges to $\xi:=\Phi(b)$.
 \end{proof}
 
 The proof of Proposition \ref{strangerthings} combined with the uniqueness of the geodesic limit immediately yields the following result.
 \begin{proposition}\label{unicitalimitegeodetico}
 Let $(X,d)$ be a proper geodesic Gromov hyperbolic metric space. 
 Let $f\colon X\to X$ be a non-expanding self-map. Let $p\in X$ and   $\eta \in \partial_GX$ such that $\lambda_{\eta,p}<+\infty$. Let $\xi$ the geodesic limit of $f$ at $\eta$. Then if 
 $(x_n)$ is a sequence in $X$ $G$-converging to $\eta$ such that
the sequence $d(x_n,p)-d(f(x_n),p)$ is bounded from above,
then $f(x_n)$ $G$-converges to $\xi$.
 \end{proposition}

\begin{definition}
Let $(X,d)$ be a proper geodesic Gromov hyperbolic metric space. Let $f\colon X\to X$ be a non-expanding map. We say that a point $\eta\in \partial_GX$ is  a \textit{boundary regular fixed point} (BRFP for short) if
$\lambda_{\eta,p}<+\infty$ and  if $f$ has geodesic limit $\eta$ at $\eta$. 
\end{definition}
 For the definition of boundary regular fixed points in bounded strongly convex domains, see e.g. \cite{AbRaback}.

We now discuss the dynamics of non-expanding self-maps in  proper geodesic Gromov hyperbolic metric spaces. 
We first recall some definitions.
\begin{definition}
Let $(X,d)$ be a metric space and $f\colon X\to X$ be a non-expanding self-map. 
\begin{itemize}
\item  A sequence $(f^n(x))_{n\in \N}$ with $x\in X$ is called a \textit{forward orbit} of $f$.
\item A sequence $(x_n)_{n\in \N}$ is called a \textit{backward orbit} if $f(x_{n+1})=x_n$ for every $n\in\N$.
\item A sequence $(x_n)_{n\in \mathbb{Z}}$ is called a \textit{complete orbit} if $f(x_{n})=x_{n+1}$ for every $n\in \mathbb{Z}$.
	\item Let $x\in X$ and $m\geq1$, the \textit{forward m-step} $s_m(x)$ of $f$ at $x$ is the limit 
	$$s_m(x):=\lim_{n\to \infty}d(f^n(x),f^{n+m}(x)) .$$
	Such limit exists since the sequence $(d(f^n(x),f^{n+m}(x)))_{n\geq0}$ is non-increasing.
	
	\item Let $x\in X$, the \textit{divergence rate} (or \textit{translation lenght}, or \textit{escape rate}) $c(f)$ of $f$ is the limit
	$$c(f):=\lim_{n\to\infty}\frac{d(x,f^n(x))}{n}. $$
	The limit exists thanks to the subadditivity of the sequence $d(x,f^n(x))$, and the definition  does not depend on the choice of $x\in X$. By \cite[Proposition 2.7]{AB} we have that $$c(f)=\lim_{m\to+\infty}\frac{s_m(z_0)}{m}=\inf_{m}\frac{s_m(z_0)}{m}.$$
	\item Let $\beta:=(x_n)_{n\geq0}$ be a backward orbit, the \textit{backward  m-step} $\sigma_m(\beta)$ of $f$ at $\beta$ is the limit
	$$\sigma_m(\beta):=\lim_{n\to\infty}d(x_n,x_{n+m}).$$
    Such limit exists since $(d(x_n,x_{n+m}))_{n\geq0}$ is non-decreasing. We say that $\beta$ has \textit{bounded step} if $\sigma_1(\beta)<\infty$. By subadditivity of the sequence $(\sigma_m(\beta))_m$ the limit
    $$b(\beta):=\lim_{m\to+\infty}\frac{\sigma_m(\beta)}{m}$$ exists and equals $\inf_m\sigma_m(\beta)/m$. We call the number $b(\beta)$  the \textit{backward step rate} of $(\beta).$	
\end{itemize}
\end{definition}

The next result establishes  relations between the forward and backward dynamics of a non-expanding map $f$ and its boundary regular fixed points.
\begin{proposition}\label{king}
Let $(X,d)$ be a proper geodesic Gromov hyperbolic metric space. Let $f\colon X\to X$ be a non-expanding self-map.
\begin{enumerate}
\item Assume that $f$ has  escaping forward orbits, and let $\zeta\in \partial_GX$ be its Denjoy--Wolff point. Then $\zeta$ is a BRFP and $$\log\lambda_{p,\zeta}\leq -c(f).$$
\item
If a point  $\eta\in \partial_GX$ is a limit point of a  backward orbit with bounded step $(w_n)$, then $\eta$ is a BRFP
and  $$\log\lambda_{p,\eta}\leq b(w_n).$$
\end{enumerate}
\end{proposition}
\begin{proof}
Fix a point $z_0\in X$. Since $d(z_0, f^{n}(z_0) )\to +\infty$, there  exists a subsequence of iterates $(f^{n_k})$ such that for all $k\geq 0$,
$$d(z_0, f^{n_k}(z_0) )<d(z_0, f^{n_k+1}(z_0) ).$$ Setting $w_k:= f^{n_k}(z_0)$ we have  $w_k\to \zeta$, $f(w_k)\to \zeta$ and for all $k\geq 0$,
$$d(z_0, w_k)-d(z_0, f(w_k))< 0.$$
By Proposition \ref{unicitalimitegeodetico} we obtain that
 $\zeta$ is a BRFP.  Moreover
\begin{align*}
-c(f)&=\lim_{n\to +\infty}\frac{-d(z_0,f^n(z_0))}{n}\\&\geq \liminf_{n\to+\infty}[-d(z_0,f^{n+1}(z_0))+d(z_0,f^n(z_0))]\\&\geq \liminf_{z\to \zeta}[d(z_0,z)-d(z_0, f(z))], 
\end{align*}
and this proves (1). 

To prove (2), assume that $(w_{n_k})$ is a subsequence of $(w_n)$ converging to $\eta$. The sequence
$f(w_{n_k})=w_{n_k-1}$ also converges to $\eta$ since  $d(w_{n_k},w_{n_k-1}) \leq \sigma_1(w_n)$ .
Moreover, 
$$ d(p,w_{n_k})-d(p,w_{n_k-1})\leq d(w_{n_k},w_{n_k-1})    \leq \sigma_1(w_n),$$
thus it follows from Proposition \ref{unicitalimitegeodetico} that
 $\eta$ is a BRFP.
 
  Moreover, for all $m\geq 1$,
\begin{align*}
\sigma_m(w_n)&=\lim_{n\to+\infty}d(w_{n},w_{n-m})\geq \liminf_{k\to+\infty}d(p,w_{n_k})-d(p,w_{n_k-m})\\
&= \liminf_{k\to+\infty}d(p,w_{n_k})-d(p,w_{n_k-1})+\dots +d(p,w_{n_k-m+1})-d(p,w_{n_k-m})\geq  m\log\lambda_{\eta,p}.
\end{align*}

\end{proof}


\begin{example}
In general  the dilation at a BRFP may depend on the base point.
Let $(X,d)$ be as in Example \ref{wwexample}. Consider the non-expanding self-map $f:X\to X$ defined as
$$f(a,b)=(a+1,1),\ \ \ (a,b)\in X. $$ Then $c(f)=1$, and the Denjoy--Wolff point of $f$ is 
the unique point in $\partial_GX$, denoted $\eta$.
By Proposition \ref{king} $\eta$ is a BRFP and for all $p\in X$
we have $\log\lambda_{\eta,p}\leq -c(f),$
but the value  $\lambda_{\eta,p}$   depends on the base point $p$. Indeed,
$$\log\lambda_{\eta,(0,1)}=\liminf_{(a,b)\to\eta}[a+1-b]-[a+1]=-1$$
and 
$$\log\lambda_{\eta,(0,-1)}=\liminf_{(a,b)\to\eta}[a+b+1]-[a+1+2]=-3.$$

%
\end{example}

We now show that under some assumption backward and forward orbits are discrete quasi-geodesics, and thus converge to a BRFP inside a geodesic region. We start by recalling the definition of discrete quasi-geodesics.
\begin{definition}
Fix $A\geq1$, $B\geq0$. 
A sequence $(x_n)_{n\geq0}$ is a \textit{discrete} $(A,B)$-\textit{quasi-geodesic ray} if for every $n,m\geq0$
$$A^{-1}|n-m|-B\leq d(x_n,x_m)\leq A|n-m|+B.$$
Similarly one can define discrete quasi-geodesics lines $(x_n)_{n\in \mathbb{Z}}.$
\end{definition}
 \begin{remark}
	\label{lem_Interpol}
	If $\left(x_n\right)_{n\geq0}$ is a discrete $\left(A,B\right)$-quasi-geodesic ray, then $\gamma:\R_{\geq0}\to X$ given by
	$$\gamma(t):=x_{\lfloor t\rfloor}, \ \ t\geq0, $$
	where $\lfloor\cdot\rfloor$ is the floor function, is a $(A,A+B)$-quasi-geodesic ray with $\gamma(n)=x_n$ for every $n\in\N$.
	Analogously, starting with  a discrete  $\left(A,B\right)$-quasi-geodesic line $\left(x_n\right)_{n\in \mathbb{Z}}$ we obtain  a $(A,A+B)$-quasi-geodesic  line $\gamma:\R\to X$.
\end{remark}

We introduce the following definition:
\begin{definition}
Let $(X,d)$ be a   proper geodesic Gromov hyperbolic metric space and $f\colon X\to X$ be a non-expanding self-map. 
We call $f$ \textit{elliptic} if all forwards orbits are bounded. 
If $f$ is not elliptic, then by Remark \ref{Rem_dicho}, every forward orbit escapes.
In this case we say that $f$ is parabolic if $c(f)=0$, and we say that $f$ is hyperbolic if $c(f)>0.$
\end{definition}

This definition generalizes at the same time the classification of holomorphic self-maps of a bounded strongly convex domain of $\C^n$ (see e.g. \cite{AbRaback}), and the  classification of isometries of a proper geodesic Gromov hyperbolic metric space (see for instance \cite{CDP}). Recall that every isometry $f\colon X\to X$ of a proper geodesic Gromov  hyperbolic metric space extends to a homeomorphism of the Gromov compactification, and that if $f$ is non-elliptic, then $f$ has either one or two fixed points at the boundary.
\begin{theorem}[Classification of isometries, see e.g. \cite{CDP}]\label{wellknown}
Let $f$ be a non-elliptic isometry of a  proper geodesic Gromov  hyperbolic metric space $X$. The following are equivalent:
\begin{enumerate}
\item $f$ is hyperbolic,
\item every complete orbit of $f$ is a discrete quasi-geodesic line,
\item there exists a complete orbit of $f$ which is a discrete quasi-geodesic line,
\item the continuous extension of $f$ to the Gromov compactification has two fixed point at the boundary.
\end{enumerate}
\end{theorem}

The following result generalizes several results in complex dynamics to our general setting, see for instance \cite[Section 3.5]{BraPog}, \cite[Theorem 1.8]{O}, \cite[Theorem 1 (iii)]{errata}, and \cite[Lemma 8.9]{AAG}. 
Such results are classically obtained with careful estimates of the Kobayashi metric near the boundary of the domain. 
The method of proof presented here is new and based on the Gromov shadowing lemma. Point (1) answers  the question in  \cite[Remark 4.4]{AAG}. 
Point (2) answers affirmatively the conjecture in \cite[Remark 2]{errata}.
\begin{proposition}\label{entersgeodes}
Let $(X,d)$ be a proper geodesic Gromov hyperbolic metric space. Let $f\colon X\to X$ be a non-expanding self-map.   
\begin{enumerate}
\item If $f$ is hyperbolic, then every forward orbit is a discrete quasi-geodesic ray and is contained in a geodesic region with vertex the Denjoy--Wolff point $\zeta$.
\item  If $f$ is hyperbolic, then  every backward orbit with bounded step is a discrete quasi-geodesic ray converging to a  BRFP $\eta\in \partial_GX$ different from the Denjoy--Wolff point $\zeta$, and is  contained in a geodesic region with vertex   $\eta$.
\item Let $(z_n)$ be a backward orbit with bounded step converging to a BRFP $\eta$, and assume that there exists a base point $p\in X$ such that  $\lambda_{\eta,p}>1$. Then $(z_n)$ is  a discrete quasi-geodesic ray, and is  contained in a geodesic region with vertex   $\eta$.
\end{enumerate}
\end{proposition}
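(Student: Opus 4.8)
\emph{Overall plan.} I would treat all three parts by a single scheme. First, show that the relevant orbit is a discrete quasi-geodesic ray (or, for the comparison in Part (2), line), with explicit constants dictated by the hypothesis. Second, identify its endpoint(s) on $\partial_G X$. Third, interpolate via Remark \ref{lem_Interpol} and apply the stability of quasi-geodesics in a Gromov hyperbolic space (the shadowing/Morse lemma, cf. \cite[Chapter 5]{GdlH}, already used in the $(1,B)$ case of Lemma \ref{geoqgeo}): the quasi-geodesic stays within bounded distance of a geodesic ray with the prescribed endpoint, which is exactly the assertion that it lies in a geodesic region with that vertex. For \emph{Part (1)}, writing $x_n:=f^n(x)$, non-expansiveness gives $d(x_n,x_{n+m})\le d(x,f^m(x))\le m\,d(x,f(x))$, while the non-increasing sequence $n\mapsto d(f^nx,f^{n+m}x)$ stays above its limit $s_m(x)\ge m\,c(f)$, so $d(x_n,x_{n+m})\ge m\,c(f)$; hence $(x_n)$ is an $(A,0)$ discrete quasi-geodesic ray with $A=\max\{d(x,f(x)),1/c(f),1\}$. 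Since $c(f)>0$ the forward orbits diverge (Remark \ref{Rem_dicho}), so Karlsson's Theorem \ref{DW} provides the Denjoy--Wolff point $\zeta$ with $x_n\to\zeta$, and the stability step places $(x_n)$ in a geodesic region with vertex $\zeta$.

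\emph{Part (2).} Let $(z_n)_{n\ge0}$ be a backward orbit with $\sigma_1(\beta)<+\infty$. Bounded step gives $d(z_n,z_{n+m})\le m\,\sigma_1(\beta)$. For the lower bound, note $z_m=f^n(z_{m+n})$, so by non-expansiveness $d(z_m,f^n(z_m))=d(f^nz_{m+n},f^nz_m)\le d(z_{m+n},z_m)$; since $k\mapsto d(z_m,f^k(z_m))$ is subadditive, Fekete's lemma (cf. \cite[Proposition 2.7]{AB}) yields $d(z_m,f^n(z_m))\ge n\,c(f)$, whence $d(z_{m+n},z_m)\ge n\,c(f)$. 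Thus $(z_n)$ is a discrete quasi-geodesic ray, so it converges to some $\eta\in\partial_GX$ and, by the stability step, lies in a geodesic region with vertex $\eta$. To show $\eta\neq\zeta$, splice $(z_n)$ with the forward orbit of $z_0$ into a complete orbit $(w_n)_{n\in\mathbb{Z}}$, with $w_{-n}=z_n$ and $w_n=f^n(z_0)$ for $n\ge0$; the estimates above, together with the mixed estimate $d(w_{-k},w_m)=d(z_k,f^mz_0)\ge d(z_0,f^{m+k}z_0)\ge(m+k)\,c(f)$ for $k,m\ge0$ (apply $f^k$ and use $f^k(z_k)=z_0$), show that $(w_n)$ is a discrete quasi-geodesic line. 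Such a line is shadowed by a geodesic line, whose two endpoints are distinct; since $w_n\to\zeta$ and $w_{-n}\to\eta$, these endpoints are $\zeta$ and $\eta$, so $\eta\neq\zeta$.

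\emph{Part (3).} Here $c(f)$ need not be positive, and I would use $\lambda_{\eta,p}>1$ instead. By definition of the dilation, for every $\varepsilon>0$ there is a $G$-neighbourhood $U$ of $\eta$ with $d(z,p)-d(f(z),p)>\log\lambda_{\eta,p}-\varepsilon$ for all $z\in U\cap X$. Taking $\varepsilon=\frac{1}{2}\log\lambda_{\eta,p}$ and $\kappa:=\frac{1}{2}\log\lambda_{\eta,p}>0$, the hypothesis $z_n\to\eta$ gives $N$ with $d(z_n,p)-d(z_{n-1},p)>\kappa$ for all $n\ge N$ (using $f(z_n)=z_{n-1}$), hence $d(z_n,p)\ge d(z_N,p)+\kappa(n-N)$. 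Combining this telescoped estimate with the bounded-step bound $d(z_n,z_m)\le\sigma_1(\beta)|n-m|$ and absorbing the finitely many indices below $N$ into an additive constant $B'$, one obtains $d(z_n,z_m)\ge\kappa|n-m|-B'$ for all $n,m\ge0$. So $(z_n)$ is a discrete quasi-geodesic ray, and since it converges to $\eta$ by hypothesis, the stability step again places it in a geodesic region with vertex $\eta$.

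\emph{Main obstacle.} The quasi-geodesic estimates are all short once the right applications of $f$ are spotted, and the final step is a citation of the Morse lemma. The delicate point is the strict inequality $\eta\neq\zeta$ in Part (2): it forces one to build the complete orbit and verify, via the non-expansiveness-plus-subadditivity trick applied across the index $0$, that it is a genuine bi-infinite quasi-geodesic, so that it can be shadowed by a geodesic line, which necessarily has two distinct endpoints.
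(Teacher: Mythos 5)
Your proof is correct and follows essentially the same route as the paper: the quasi-geodesic bounds via subadditivity of $n\mapsto d(x,f^n(x))$ (resp.\ the dilation estimate in (3)), interpolation plus the Gromov shadowing lemma to place the orbit in a geodesic region, and in (2) the spliced complete orbit shadowed by a geodesic line, whose distinct endpoints give $\eta\neq\zeta$. Your treatment of the finitely many initial indices in (3) by absorbing them into an additive constant is, if anything, slightly more careful than the paper's terse claim at that step.
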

\proof
(1) A discrete quasi-geodesic ray is contained in  a geodesic region: indeed by Remark \ref{lem_Interpol} it can be interpolated by a quasi-geodesic ray $\gamma$, hence by the Shadowing Lemma \cite[Chapter 5]{GdlH} there exists a geodesic ray $\sigma$ and $R>0$ such that $$\gamma(\R_{\geq0})\subseteq A(\sigma,R).$$

We show that  every  forward orbit is a discrete quasi-geodesic ray. Let $(x_n)$ be a forward orbit, then for every $m,n\geq 0$
$$d(x_n,x_m)\leq|m-n|d(x_0,x_1)$$
and
$$d(x_n,x_m)\geq s_{|m-n|}(x_0)\geq c(f)|m-n|,$$
so $(x_n)$ is a discrete $(A,0)$-quasi-geodesic with $A:=\max\{c(f)^{-1},d(x_0,x_1)\}$. This proves (1). 

(2) Let $(z_n)$ be a backward orbit  with bounded step, and consider the complete orbit defined by 
$$x_n:=\begin{cases}
z_{-n} & n\leq 0 \\
f^n(z_0)& n\geq 0.\\
\end{cases}   $$
The complete orbit $(x_n)_{n\in \mathbb{Z}}$ is a  discrete quasi-geodesic line. Indeed for all $m, n\in \mathbb{Z}$ we have that 
$$      \sigma_1(z_n)|m-n|   \geq d(x_n,x_m)\geq  c(f)|m-n|.$$ By the  Shadowing Lemma  there exists a geodesic line $\sigma$ and $R>0$ such that $$x_n\in A(\sigma,R), \quad \forall n\in \mathbb{Z}.$$ Notice that a geodesic line in a Gromov space has different endpoints at $\pm\infty$. It follows that  $(z_n)$ converges to a point $\eta$ different from the Denjoy--Wolff point $\zeta$, and that $(z_n)$ is  contained in a geodesic region with vertex $\eta$.  

(3) We have  $d(x_n,x_m)\leq\sigma_1(z_n)|m-n| $ for all $m,n\geq 0$. To obtain the other inequality,  let $0<a<\log\lambda_{\eta,p}$. There exists $N>0$ such that for all $n\geq N$ we have that $d(p,z_{n+1})-d(p,z_n)\geq a.$ Then  we have that 
\begin{equation}\label{tenet}
d(z_m,z_n)\geq a|m-n|,\quad \forall m,n\geq N.
\end{equation} 

\endproof

The following result is an immediate consequence of point (1) of the previous proposition.
\begin{corollary}
Let $(X,d)$ be a proper geodesic Gromov hyperbolic metric space. Let $f\colon X\to X$ be a non-elliptic non-expanding self-map.  Then the following are equivalent:
\begin{enumerate}
\item  $f$ is hyperbolic,
\item every forward orbit is a discrete quasi-geodesic ray,
\item there exists a forward orbit which is a discrete quasi-geodesic ray.
\end{enumerate}
\end{corollary}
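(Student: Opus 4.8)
The plan is to establish the cycle of implications $(1)\Rightarrow(2)\Rightarrow(3)\Rightarrow(1)$, each step being very short once Proposition \ref{entersgeodes} is available.

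First I would observe that $(1)\Rightarrow(2)$ is \emph{exactly} point (1) of Proposition \ref{entersgeodes}: if $c(f)>0$, then every forward orbit $(f^n(x))_{n\ge 0}$ is a discrete quasi-geodesic ray (concretely, with parameters $A=\max\{c(f)^{-1},d(x,f(x))\}$ and $B=0$). Since by hypothesis forward orbits diverge, the Denjoy--Wolff point is well defined via Theorem \ref{DW}, so the statement of Proposition \ref{entersgeodes}(1) applies verbatim and nothing more is needed here. The implication $(2)\Rightarrow(3)$ is then immediate: fix any point $x\in X$ and take its forward orbit, which is a discrete quasi-geodesic ray by assumption.

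For $(3)\Rightarrow(1)$ I would argue directly from the definition of the divergence rate. Suppose $(f^n(x))_{n\ge 0}$ is an $(A,B)$ discrete quasi-geodesic ray. Setting $m=0$ in the defining inequality yields $d(x,f^n(x))\ge A^{-1}n-B$ for all $n\ge 0$, hence
$$c(f)=\lim_{n\to\infty}\frac{d(x,f^n(x))}{n}\ge \frac{1}{A}>0,$$
where we recall that the value $c(f)$ does not depend on the choice of $x$. This closes the cycle.

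I do not expect a genuine obstacle: all the substance has already been packaged into Proposition \ref{entersgeodes}, whose proof of part (1) rests on interpolating the orbit by a quasi-geodesic ray (Remark \ref{lem_Interpol}) and applying the Gromov shadowing lemma. The only point worth flagging in the write-up is that $(1)\Rightarrow(2)$ is the non-trivial implication, the other two being a tautology and a one-line estimate respectively.
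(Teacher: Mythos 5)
Your proof is correct and takes essentially the same approach as the paper, which presents the corollary as an immediate consequence of point (1) of Proposition \ref{entersgeodes}. Your explicit verification of $(3)\Rightarrow(1)$ via the lower bound $d(x,f^n(x))\ge A^{-1}n-B$, giving $c(f)\ge 1/A>0$, is exactly the one-line estimate the paper leaves implicit.
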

Notice that there are examples of  parabolic non-expanding maps  such that every forward orbit is contained in a geodesic region with vertex the Denjoy--Wolff point $\eta$. A classical example is the  holomorphic self-map $z\mapsto z+1$ in the right half plane $\H$. However, for isometries  this cannot happen, as the following result shows.
\begin{proposition}
Let $(X,d)$ be a proper geodesic Gromov hyperbolic metric space such that $\partial_GX$ contains at least two points. Let $f\colon X\to X$ be a non-elliptic isometry and let $\eta$ be its Denjoy--Wolff point.
The following are equivalent:
	\begin{enumerate}
		\item $f$ is hyperbolic,
		\item every forward orbit of $f$ is contained in a geodesic region with vertex $\eta$,
		\item there exist a forward orbit of $f$ which is contained in a geodesic region with vertex $\eta$,
		\item $f$ and $f^{-1}$ have different Denjoy--Wolff points.
	\end{enumerate}
\end{proposition}

\begin{proof}
$[(1)\Rightarrow(2)]$ is Proposition \ref{entersgeodes}.

$[(3)\Rightarrow(4)]$
Notice that if a forward orbit of $f$ is contained in a geodesic region with vertex the Denjoy--Wolff point $\eta$, then every forward orbit is. Since $\partial_GX$ contains at least two points, there exists a geodesic line  $\gamma$  such that $\lim_{\to+\infty}\gamma(t)=\eta$.
Let $x_0:=\gamma(0)$ and denote $x_n:=f^n(x_0)$ for all $n\in \mathbb{Z}$. It is enough to prove that 
$$\limsup_{n+\infty}(x_n|x_{-n})_{x_0}<+\infty.$$
Notice that
$$2(x_n|x_{-n})_{x_0}:=d(x_n,x_0)+d(x_{-n},x_0)-d(x_{-n},x_n)=d(x_0,x_n)+d(x_n,x_{2n})-d(x_{2n},x_0).$$
By assumption there exits $R>0$ such that $x_n\in A(\gamma|_{\R_{\geq 0}},R)$. Hence for every $n\in\mathbb{N}$ there exits $y_n\in\gamma$ such that $d(x_n,y_n)\leq R$. Finally, using Proposition 7.4 in \cite{AAG} we have
\begin{align*}
	&d(x_0,x_n)+d(x_{n},x_{2n})-d(x_{2n},x_0)\\
	&\leq  2d(x_n,y_n)+d(x_0,y_n)+d(y_n,y_{2n})+d(y_{2n},x_{2n})-d(x_0,y_{2n})-d(y_{2n},x_{2n})+6\delta\\
	&\leq4R+6\delta+d(x_0,y_n)+d(y_n,y_{2n})-d(x_0,y_{2n})\leq 4R+6\delta.
\end{align*}
 $[(4)\Rightarrow(1)]$ This is well-know, and follows immediately from Theorem \ref{wellknown}. We give a short self-contained proof for completeness. Fix $x_0\in X$ and denote $x_n=f^n(x_0)$ with $n\in\mathbb{Z}$. Since $x_n$ and $x_{-n}$ converge to two different points in $\partial_GX$ there exits $R>0$ such that, for every $n,m\in\N$,
\begin{equation}\label{gpr1}
2(x_n|x_{-m})_{x_0}=d(x_n,x_0)+d(x_m,x_0)-d(x_{n+m},x_0)<R.
\end{equation}

Since $d(x_n,x_0)\to+\infty$, there exists $N\in\N$ such that $d(x_N,x_0)>R$. Now from (\ref{gpr1}) we have, for all $n\geq 1$,
$$d(x_{nN},x_0)>d(x_{(n-1)N},x_0)+d(x_N,x_0)-R ,$$
consequently 
$$d(x_{nN},x_0)>n[d(x_N,x_0)-R],$$
which implies that $c(f^N)>0$. We conclude noticing that $c(f^N)=Nc(f)$.
\end{proof}

In the remaining part of the section, we refine our results adding the assumption that  $\overline X^H$ and $\overline X^G$ are topologically equivalent, which by Theorem \ref{weaknotion} is the case if $X$ satisfies the weak    approaching geodesics property.   In this case we can identify the two compactifications, hence it makes sense to consider a horosphere $E_p(\eta,R)$ centered at a point $\zeta$ in the Gromov boundary. We start with a generalization  of the classical Julia's Lemma.

\begin{theorem}[Julia's Lemma]\label{Julia}
Let $(X,d)$ be a proper geodesic Gromov hyperbolic metric space such that $\overline X^H$ is topologically equivalent to $\overline X^G$. Let $f\colon X\to X$ be a non-expanding self-map.  Let $p\in X$ and   $\eta \in \partial_GX$ such that $\lambda_{\eta,p}<+\infty$.  Let $\xi\in \partial_GX$ be the geodesic limit of $f$  at $\eta$.
Then,  for all $R>0$,
\begin{equation}\label{carrie}
f(E_p(\eta,R))\subseteq E_p(\xi,\lambda_{\eta,p} R).
\end{equation}
\end{theorem}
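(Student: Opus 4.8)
The plan is to obtain the theorem almost directly from Proposition \ref{strangerthings} and Lemma \ref{weakJ}, the topological equivalence $\overline X^H\cong\overline X^G$ entering only to give meaning to the horospheres $E_p(\eta,R)$ and $E_p(\xi,R)$ centered at Gromov boundary points. First I would fix the homeomorphism $\Phi\colon\overline X^H\to\overline X^G$ extending the identity on $X$; since it is bijective there is a unique $a\in\partial_HX$ with $\Phi(a)=\eta$, and by definition $E_p(\eta,R)=\{h_{a,p}<\log R\}$, while $E_p(\Phi(b),R)=\{h_{b,p}<\log R\}$ for any $b\in\partial_HX$.

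Next I would reuse the construction from the proof of Proposition \ref{strangerthings}: since $\lambda_{\eta,p}<+\infty$, pick a sequence $(w_n)$ that $G$-converges to $\eta$ with $d(w_n,p)-d(f(w_n),p)\to\log\lambda_{\eta,p}$, and pass to a subsequence along which $(w_n)$ $H$-converges to $a$ (necessarily the unique $\Phi$-preimage of $\eta$, by continuity of $\Phi$) and $(f(w_n))$ $H$-converges to some $b\in\partial_HX$; the limit lies in the boundary because finiteness of $\lambda_{\eta,p}$ forces $d(f(w_n),p)\to+\infty$. Set $\xi:=\Phi(b)$. Proposition \ref{strangerthings} already gives that $f$ has geodesic limit $\xi$ at $\eta$, and a geodesic limit, when it exists, is unique: testing against the sequence $\gamma(n)\in A(\gamma,1)$ for a geodesic ray $\gamma$ with $[\gamma]=\eta$, and using that $\overline X^G$ is Hausdorff, shows no other point of $\partial_GX$ can be a geodesic limit of $f$ at $\eta$. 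This settles the uniqueness clause.

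For the inclusion \eqref{carrie} I would apply Lemma \ref{weakJ} to $(w_n)$ with $A=\log\lambda_{\eta,p}<+\infty$, obtaining $h_{b,p}\circ f\le h_{a,p}+\log\lambda_{\eta,p}$. Then, given $R>0$ and $x\in E_p(\eta,R)$, i.e.\ $h_{a,p}(x)<\log R$, this yields $h_{b,p}(f(x))\le h_{a,p}(x)+\log\lambda_{\eta,p}<\log R+\log\lambda_{\eta,p}=\log(\lambda_{\eta,p}R)$, so $f(x)\in E_p(\xi,\lambda_{\eta,p}R)$, which is exactly \eqref{carrie}.

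There is no serious obstacle: the substance is already contained in Proposition \ref{strangerthings} and Lemma \ref{weakJ}. The only points requiring care are bookkeeping ones — keeping track that the boundary point $b$ in the Julia-type inequality is the same one entering the definition $\xi=\Phi(b)$ of the geodesic-limit point, and recalling that under the identification $\overline X^H\cong\overline X^G$ the horosphere centered at a Gromov boundary point is by definition the sublevel set of the associated horofunction — together with the (already flagged) double use of $\lambda_{\eta,p}<+\infty$, both to push $(f(w_n))$ into $\partial_HX$ and to keep the constant in Lemma \ref{weakJ} finite.
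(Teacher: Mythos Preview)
Your proposal is correct and follows essentially the same approach as the paper's own proof: pick a sequence $(w_n)\to\eta$ realizing the $\liminf$ defining $\lambda_{\eta,p}$, use finiteness of $\lambda_{\eta,p}$ to force $(f(w_n))$ to the boundary, extract a subsequential limit $b\in\partial_HX$ (equivalently $\xi\in\partial_GX$ via the identification), apply Lemma~\ref{weakJ} to obtain the horosphere inclusion \eqref{carrie}, and invoke the argument of Proposition~\ref{strangerthings} for the geodesic-limit statement. Your write-up is in fact more careful than the paper's in tracking the identification $\Phi$ and in spelling out the (trivial) uniqueness of the geodesic limit, which the paper leaves implicit.
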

\begin{proof}
Let $(w_n)$ be a sequence converging to $\eta$ such that $d(w_n,p)-d(f(w_n),p)\to \log \lambda_{\eta,p}.$
Since $X$ is complete, $d(p,w_n)\to +\infty$, and thus  $d(f(w_n),p)\to +\infty$. Up to extracting a subsequence we can hence assume that 
$(w_n)$ converges to a point $\xi \in \partial_GX$. By Lemma \ref{weakJ}, for all $R>0$ we have \eqref{carrie}.
By Proposition \ref{unicitalimitegeodetico} it follows that $f$ has geodesic limit $\xi$ at $\eta$.
\end{proof}

\begin{remark}
Abate generalized Julia's Lemma to bounded strongly convex domains of $\C^q$ with $C^3$ boundary (see \cite[Proposition 2.7.15]{AbBook}). It follows from the previous theorem  that the same statement holds for bounded  strongly pseudoconvex domains with $C^2$ boundary  and for bounded convex domains of D'Angelo finite type with $C^\infty$ boundary.
\end{remark}

 We now show some applications of the Julia's Lemma. The first is that the dilation of a BRFP does not depend on the base point.
\begin{proposition}
Let $(X,d)$ be a proper geodesic Gromov hyperbolic metric space such that $\overline X^H$ is topologically equivalent to $\overline X^G$. Let $f\colon X\to X$ be a non-expanding map. Let $\eta\in \partial_GX$ and $p\in X$, and assume that $\lambda_{\eta,p}<\infty$. Let  $\xi$ be the geodesic limit of $f$ at $\eta$. Then
$$
\lambda_{\eta,p}=\min\{C>0 \,\colon\, f(E_p(\eta,R))\subseteq  E_p(\xi,CR) \quad \forall R>0\}.
$$
Furthermore if $\xi=\eta$ (that is, if $\eta$ is a BRFP),  then  the dilation $\lambda_{\eta,p}$ does not depend on the base point $p$.
\end{proposition}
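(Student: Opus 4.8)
The plan is to rewrite the family of inclusions as a single pointwise inequality between horofunctions, and then to pin down the optimal constant as $\lambda_{\eta,p}$ by squeezing it between Julia's Lemma from above and the behaviour of $h_{\eta,p}$ along a geodesic ray pointing at $\eta$ from below.

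First I would record an elementary reformulation: for $C>0$ one has $f(E_p(\eta,R))\subseteq E_p(\xi,CR)$ for every $R>0$ if and only if
\[
h_{\xi,p}(f(z))\le h_{\eta,p}(z)+\log C\qquad\text{for all }z\in X.
\]
The implication $(\Leftarrow)$ is immediate from \eqref{eq:horo}; for $(\Rightarrow)$, given $z\in X$ and $\varepsilon>0$ one applies the inclusion with $R:=e^{h_{\eta,p}(z)+\varepsilon}$, for which $z\in E_p(\eta,R)$, and lets $\varepsilon\to 0^+$. Writing $S_p:=\sup_{z\in X}(h_{\xi,p}(f(z))-h_{\eta,p}(z))$, the set in the statement is therefore exactly $\{C>0:\log C\ge S_p\}$, so it is enough to prove $S_p=\log\lambda_{\eta,p}$: then this set equals $[\lambda_{\eta,p},+\infty)$ and its minimum is $\lambda_{\eta,p}$. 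Here $S_p\ge h_{\xi,p}(f(p))\ge-d(p,f(p))>-\infty$ by Remark \ref{pennywise}, while $S_p\le\log\lambda_{\eta,p}<+\infty$ by Theorem \ref{Julia} (applicable since $\lambda_{\eta,p}<\infty$) together with the reformulation above.

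It remains to prove $S_p\ge\log\lambda_{\eta,p}$. Choose a geodesic ray $\gamma$ with $\gamma(0)=p$ and $[\gamma]=\eta$. Since $\overline{X}^H$ and $\overline{X}^G$ are topologically equivalent and $\gamma(n)\to\eta$ in $\overline{X}^G$, we also have $\gamma(n)\to\eta$ in $\overline{X}^H$, hence for every fixed $t\ge0$
\[
h_{\eta,p}(\gamma(t))=\lim_{n\to\infty}\big(d(\gamma(t),\gamma(n))-d(\gamma(n),p)\big)=\lim_{n\to\infty}\big((n-t)-n\big)=-t .
\]
Combining this with Remark \ref{pennywise} for $h_{\xi,p}$ at the point $f(\gamma(t))$, for every $t\ge0$ we get
\[
h_{\xi,p}(f(\gamma(t)))-h_{\eta,p}(\gamma(t))=h_{\xi,p}(f(\gamma(t)))+t\ \ge\ t-d(f(\gamma(t)),p)=d(\gamma(t),p)-d(f(\gamma(t)),p).
\]
Since this holds for every $t\ge0$ and $\gamma(t)\to\eta$ as $t\to+\infty$, we conclude that $S_p\ge\liminf_{z\to\eta}(d(z,p)-d(f(z),p))=\log\lambda_{\eta,p}$. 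This gives $S_p=\log\lambda_{\eta,p}$, so the minimum in the statement equals $\lambda_{\eta,p}$.

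For the last assertion, suppose $\xi=\eta$. Finiteness of $\lambda_{\eta,\cdot}$ is independent of the base point, and the geodesic limit of $f$ at $\eta$ is defined through geodesic regions, hence is base-point independent as well; thus for any $q\in X$ the geodesic limit of $f$ at $\eta$ is still $\eta$, and the part already proved (applied with base point $q$) gives $\log\lambda_{\eta,q}=S_q=\sup_{z\in X}(h_{\eta,q}(f(z))-h_{\eta,q}(z))$. Any two horofunctions centered at $\eta$ differ by an additive constant, so $h_{\eta,q}=h_{\eta,p}-h_{\eta,p}(q)$; therefore $h_{\eta,q}(f(z))-h_{\eta,q}(z)=h_{\eta,p}(f(z))-h_{\eta,p}(z)$ for every $z\in X$, whence $S_q=S_p$ and $\lambda_{\eta,q}=\lambda_{\eta,p}$. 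The one genuinely nontrivial point is the lower bound $S_p\ge\log\lambda_{\eta,p}$: the reverse inequality is exactly Julia's Lemma, while the lower bound needs a sequence approaching $\eta$ along which $h_{\eta,p}$ can be evaluated exactly — this is what the geodesic ray provides through the identity $h_{\eta,p}(\gamma(t))=-t$, and it is precisely at this step that the hypothesis that $\overline{X}^H$ and $\overline{X}^G$ coincide is used.
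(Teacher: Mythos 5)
Your proof is correct and follows essentially the same route as the paper's: the admissibility of $C=\lambda_{\eta,p}$ comes from Julia's Lemma (Theorem \ref{Julia}), the lower bound comes from evaluating along a geodesic ray from $p$ to $\eta$ using $h_{\eta,p}(\gamma(t))=-t$ together with Remark \ref{pennywise}, and base-point independence when $\xi=\eta$ comes from the fact that horofunctions centered at $\eta$ differ by an additive constant (equivalently, the horospheres at $p$ and $q$ differ by a rescaling of the radius). Your reformulation of the family of inclusions as the single inequality $h_{\xi,p}\circ f\le h_{\eta,p}+\log C$ is only a cosmetic repackaging of the paper's argument with the set $\Gamma(p)$.
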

\begin{proof}
Define 
$$\Gamma(p):=\{C>0 \,\colon\, f(E_p(\eta,R))\subseteq  E_p(\eta,CR)\quad\forall R>0\}.$$
Let $\gamma$ be a geodesic ray connecting $p$ to $\eta$. Fix $0<R<1$ and let $z_R\in \gamma$ such that $d(p,z_R)=-\log R$. Clearly $ z_R\in \partial E_p(\eta,R)$, and thus for every $C\in\Gamma(p)$ we have that
$f(z_R)\in {\rm cl}_G\left( E_p(\xi,CR)\right).$
It follows that $$d(p,f(z_R))\geq -\log (CR),$$
and thus
$d(p,z_R)-d(p,f(z_R))\leq \log C$. Since as $R\to 0$ we have that $z_R\to \zeta$.
It follows that
$$
\log\lambda_{\eta,p}=\liminf_{z\to \zeta}d(p,z)-d(p,f(z))\leq \log C\qquad\forall C\in\Gamma(p).
$$
By Theorem \ref{Julia} we have $\lambda_{\eta,p}\in\Gamma(p)$, therefore $\lambda_{\eta,p}=\min\Gamma(p)$.

If $\eta=\xi$ and we change the base point to $q\neq p$, then there exists a positive number $a>0$ so that $E_q(\eta,R)=E_p(\eta,aR)$ for every $R>0$. It follows that $\Gamma(p)=\Gamma(q)$, and therefore $\lambda_{\eta,p}=\lambda_{\eta,q}$.
\end{proof}

In what follows we will denote the dilation at a BRFP  $\eta$ simply by $\lambda_\eta$. The fact
that the dilation is independent on the base point allows us to partition the set of BRFPs in three classes:
\begin{definition}
Let $(X,d)$ be a proper geodesic Gromov hyperbolic metric space such that $\overline X^H$ is topologically equivalent to $\overline X^G$. Let $f\colon X\to X$ be a non-expanding self-map and let $\eta$ be a BRFP. We say
that $\eta$ is \textit{attracting} if $\lambda_\eta<1$, it is \textit{indifferent} if $\lambda_\eta=1$, and it is
 \textit{repelling} if $\lambda_\eta>1$.
\end{definition}

A second application of the Julia lemma is the following result relating forward dynamics and BRFPs.
\begin{theorem}\label{Korhyp}
Let $(X,d)$ be a proper geodesic Gromov hyperbolic metric space such that $\overline X^H$ is topologically equivalent to $\overline X^G$. Let $f\colon X\to X$ be a non-elliptic non-expanding self-map.
Let $\zeta\in \partial_GX$ be its Denjoy--Wolff point. Then $\zeta$ is the unique  attracting or indifferent BRFP of $f$. Moreover
\begin{equation}\label{divergencerate}
c(f)=-\log\lambda_\zeta,
\end{equation}
 and thus $f$ is hyperbolic iff its Denjoy--Wolff point is attracting, and is parabolic if and only if its Denjoy--Wolff point is indifferent.
\end{theorem}
\begin{proof}

By contradiction, let $\eta\in\partial_GX$ be a BRFP with $\lambda_{\eta}\leq1$ different from $\zeta$. Since $\eta$ is a boundary regular fixed point, by Theorem \ref{Julia} for every $R>0$ we have
$$f(E_p(\eta,R))\subseteq E_p(\eta,R).$$
Let $R>0$ such that $$K:=cl_G(E_p(\eta,R))\cap cl_G(E_p(\xi,R))\neq\varnothing.$$ Using Proposition \ref{prophor} one notices that $K$ is a compact set of $(X,d)$. Finally by Theorem \ref{Julia} one has $f(K)\subseteq K$, and so $f$ has no escaping forward orbits.

We are left with proving \eqref{divergencerate}. If $\lambda_\zeta=1$ the result follows immediately. Assume that $0<\lambda_\zeta<1$, and let $z\in E_p(\zeta,1)$.  Then it follows from Theorem \ref{Julia} that
$f^n(z)\in E_p(\zeta, \lambda^n_\zeta).$ Hence
$$\frac{d(p,f^n(z))}{n}\geq -\frac{\log \lambda_\zeta^n}{n}=-\log\lambda_\zeta.$$
\end{proof}

Both conclusions of the previous result are false without the assumption  that  $\overline X^H$ is   topologically equivalent to $\overline X^G$, even if we assume that the dilation does not depend on the base point, as the following example shows.
\begin{example}
Consider $(\R,d)$ and $(\mathbb{S}^1,d')$ where $d$ is the Euclidean  distance and  $d'$ is the inner distance induced by the Euclidean distance. Endow $X:=\R\times \mathbb{S}^1$ with the distance
$d''((x_1,y_1),(x_2,y_2))=d(x_1,x_2)+d(y_1,y_2).$
Then   $X$ is a proper geodesic Gromov hyperbolic space.
The Gromov boundary consists of two points $+\infty$ and $-\infty$, while the horofunction boundary is the disjoint union of two $\mathbb{S}^1$. Let $\vartheta\in [0,\pi]$ and let  $R_\theta\colon \mathbb{S}^1\to \mathbb{S}^1$ be the counterclockwise rotation by the angle $\vartheta$. The hyperbolic isometry $f\colon X\to X$ defined by $$f(x,y)=(x+1,R_\vartheta(y))$$ has divergence rate $c(f)=1$ and Denjoy--Wolff point $+\infty$, while $-\infty$ is the Denjoy--Wolff point of $f^{-1}.$ 
It is easy to see that  the dilations $\lambda_{-\infty,p}$ and $\lambda_{+ \infty,p}$ do not depend on the base point $p$ and are equal respectively to $e^{1-\vartheta}$ and $e^{-1-\vartheta}$. Hence if   $\vartheta>1$ we have that both dilations are strictly smaller than one. Notice that this shows that  the dilation is not the right tool to define attracting/indifferent/repelling BRFPs when the two compactifications are not equivalent.

\end{example}

We now move to backward dynamics and its relation  with BRFPs.
\begin{proposition}
Let $(X,d)$ be a proper geodesic Gromov hyperbolic metric space such that $\overline X^H$ is topologically equivalent to $\overline X^G$. Let $f\colon X\to X$ be a non-expanding map and let $(w_k)$ be a backward orbit with bounded step. Then,
\begin{enumerate}
\item  if   $\eta\in \partial_GX$ is a limit point of $(w_k)$, then $\eta$ is a  repelling or indifferent BRFP with dilation satisfying $\log\lambda_\eta\leq b(w_k)$,
\item if $f$ is hyperbolic, then there exists a repelling  BRFP $\eta$  such that $(w_k)\to \eta$ and   $(w_k)$ is a discrete quasi-geodesics contained  in a geodesic region with vertex $\eta$.
\item if there exists a  repelling  BRFP $\eta$   such that $(w_k)\to \eta$, then  $(w_k)$ is a discrete quasi-geodesics contained  in a geodesic region with vertex $\eta$.
\end{enumerate}
\end{proposition}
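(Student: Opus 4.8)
The plan is to reduce all three parts to results already established, the only genuinely new input being the lower bound on the dilation in part~(1). For part~(2), note that $c(f)>0$ forces $d(x,f^n(x))\to+\infty$, so every forward orbit diverges and $f$ has a Denjoy--Wolff point $\zeta$ (Theorem~\ref{DW}); Proposition~\ref{entersgeodes}(2) then gives directly that $(w_k)$ is a discrete quasi-geodesic ray converging to a point $\eta\in\partial_GX$ with $\eta\neq\zeta$ and contained in a geodesic region with vertex $\eta$. By Proposition~\ref{king}(2) the point $\eta$ is a boundary regular fixed point, and since $\zeta$ is the \emph{unique} boundary regular fixed point with dilation $\leq1$ by Proposition~\ref{Korhyp} while $\eta\neq\zeta$, we conclude $\lambda_\eta>1$. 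For part~(3), since $\eta$ is a boundary regular fixed point and $\overline X^H$ is topologically equivalent to $\overline X^G$, the dilation $\lambda_{\eta,p}$ does not depend on $p$ (as shown above), so we may write $\lambda_\eta$, and the hypothesis $\lambda_\eta>1$ is precisely the hypothesis of Proposition~\ref{entersgeodes}(3); that proposition then yields the conclusion verbatim.

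It remains to prove part~(1). By Proposition~\ref{king}(2), $\eta$ is already a boundary regular fixed point (in particular $\lambda_\eta<+\infty$), so only the inequality $\lambda_\eta\geq1$ requires an argument, which I would give by contradiction. Assume $\lambda_\eta<1$. Since $\eta$ is a boundary regular fixed point, the geodesic limit of $f$ at $\eta$ is $\eta$ itself, so Theorem~\ref{Julia} gives $f(E_p(\eta,R))\subseteq E_p(\eta,\lambda_\eta R)$ for every $R>0$; letting $R$ decrease to $e^{h_{\eta,p}(z)}$ this is equivalent to the pointwise estimate $h_{\eta,p}\circ f\leq h_{\eta,p}+\log\lambda_\eta$, and iterating, $h_{\eta,p}(f^n(x))\leq h_{\eta,p}(x)+n\log\lambda_\eta\to-\infty$ for every $x\in X$. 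By Remark~\ref{pennywise} this forces $d(p,f^n(x))\to+\infty$, so all forward orbits diverge and $f$ has a Denjoy--Wolff point; moreover $f^n(x)$ eventually lies in every horoball $E_p(\eta,R)$, so by Proposition~\ref{prophor} one has $f^n(x)\to\eta$, that is, the Denjoy--Wolff point of $f$ is $\eta$. Then \eqref{divergencerate} gives $c(f)=-\log\lambda_\eta>0$, and Proposition~\ref{entersgeodes}(2) says that the bounded-step backward orbit $(w_k)$ converges to a point of $\partial_GX$ different from the Denjoy--Wolff point, contradicting $(w_k)\to\eta$. Hence $\lambda_\eta\geq1$.

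The step I expect to be the main obstacle is exactly $\lambda_\eta\geq1$ in part~(1). A naive approach would estimate $h_{\eta,p}$ along the backward orbit $(w_k)$ itself, but the convergence $(w_k)\to\eta$ gives no control on $h_{\eta,p}(w_k)$: the approach to $\eta$ may be ``tangential'', and indeed $h_{\eta,p}(w_k)\to+\infty$ is possible even for $0$-hyperbolic $X$, so $\lambda_\eta<1$ cannot be refuted by a horofunction estimate on $(w_k)$. The argument above circumvents this by applying Julia's Lemma to the \emph{forward} dynamics, which pins $\eta$ down as the Denjoy--Wolff point with $c(f)>0$; only then does the already-established description of bounded-step backward orbits in Proposition~\ref{entersgeodes}(2) produce the contradiction.
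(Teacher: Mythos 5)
Your proposal is correct and follows essentially the same route as the paper: part (1) is proved by contradiction assuming $\lambda_\eta<1$, using Julia's Lemma to make $\eta$ the Denjoy--Wolff point with $c(f)=-\log\lambda_\eta>0$ via \eqref{divergencerate}, and then contradicting the conclusion of Proposition \ref{entersgeodes}(2); parts (2) and (3) are deduced from Propositions \ref{entersgeodes}, \ref{king} and \ref{Korhyp} exactly as the paper indicates. The only difference is that you spell out in detail the step the paper states tersely (that $\lambda_\eta<1$ forces diverging forward orbits converging to $\eta$), and your elaboration is sound.
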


\proof
(1)
By Proposition \ref{king}, $\eta$ is a BRFP and  $\log\lambda_\eta\leq b(w_k)$. Assume by contradiction that $\lambda_\eta<1$, then by Theorem \ref{Julia} $f$ is non-elliptic and its Denjoy-Wolff point is $\eta$. Finally using (\ref{divergencerate}), we have $c(f)>0$ and so by (2) in Proposition \ref{entersgeodes} the backward orbit $(w_k)$ converges to a point different from $\eta$, which is a contradiction.

The remaining parts follow from Proposition \ref{entersgeodes} and Proposition \ref{Korhyp}.
\endproof

We conclude with some  open questions about backward dynamics. Assume $(X,d)$ is a proper geodesic Gromov hyperbolic metric space such that $\overline X^H$ is topologically equivalent to $\overline X^G$. Let $f\colon X\to X$ be a  non-expanding map.

\begin{enumerate}
\item If $f$ is parabolic, does every backward orbit with bounded step converge to a BRFP? If $f$ is elliptic,  does every escaping backward orbit with bounded step converge to a BRFP?
\item If $\zeta$ is a repelling BRFP, does there exist a backward orbit with bounded step converging to it? (If $\zeta$ is indifferent, this is not always the case, e.g. the holomorphic map $z\mapsto z+1$ on the right half-plane $\H$). 
\item If a backward orbit with bounded step $(w_k)$ converges to a BRFP $\eta$, is it true that 
$$\log\lambda_\eta=b(w_k)?$$
\end{enumerate}
Question (1) is open for holomorphic self-maps of the ball, and the answer is positive in the  disc \cite[Lemma 2.7]{Br}. 
It is proved in \cite{AAG} that for holomorphic self-maps of bounded strongly convex domains of $\C^q$ the answer to (2) is positive, and  if $\zeta$ is repelling the answer to (3) is positive. See \cite{PoCo1, O, AbRaback, AG} for previous  results related to Question (2).

\end{document}